\newcommand{\shadetheboxes}[1]{
    \foreach \x/\y in {#1}
    \fill[pattern color = black!75, pattern=north east lines] (\x,\y) rectangle +(1,1);
}
\newcommand{\drawthegrid}[1]{
    \draw (0.01,0.01) grid (#1+0.99,#1+0.99);
}
\newcommand{\drawverticallines}[3]{
    \foreach \x in {#2}
    \draw[line width=#3, line cap=round] (\x+0.01,0.01) -- (\x+0.01,#1+0.99);
}
\newcommand{\drawhorizontallines}[3]{
    \foreach \y in {#2}
    \draw[line width=#3, line cap=round] (0.01,\y+0.01) -- (#1+0.99,\y+0.01);
}
\newcommand{\drawclpattern}[2]{
	\foreach[count=\x] \y in {#1}
	{
		\filldraw (\x,\y) circle (#2 pt);
	}
}
\newcommand{\drawspecialbox}[1]{
    \foreach \x/\y/\z/\w/\A in {#1}
    {
        \fill[color = white!100, opacity=1, rounded corners = 1.5pt] (\x+0.125,\y+0.125) rectangle (\z-0.125,\w-0.125);
        \draw[color = black, rounded corners = 1.5pt] (\x+0.125,\y+0.125) rectangle (\z-0.125,\w-0.125);
        \fill[black] (\x/2+\z/2,\y/2+\w/2) node {\A};
    }
}
\newcommand{\drawspecialboxlarge}[1]{
    \foreach \x/\y/\z/\w/\A in {#1}
    {
        \fill[color = white!100, opacity=1, rounded corners = 1.5pt] (\x+0.125,\y+0.125) rectangle (\z-0.125,\w-0.125);
        \draw[color = black, rounded corners = 1.5pt] (\x+0.125,\y+0.125) rectangle (\z-0.125,\w-0.125);
        \fill[black] (\x/2+\z/2,\y/2+\w/2) node {\Large \A};
    }
}
\newcommand{\drawsolidshadedbox}[1]{
    \foreach \x/\y/\z/\w/\A in {#1}
    {
        \fill[color = gray!50, opacity=1, rounded corners=1.5pt] (\x+0.125,\y+0.125) rectangle (\z-0.125,\w-0.125);
        \draw[color = black, rounded corners=1.5pt] (\x+0.125,\y+0.125) rectangle (\z-0.125,\w-0.125);
        \fill[black] (\x/2+\z/2,\y/2+\w/2) node {\A};
    }
}
\newcommand{\drawlabels}[1]{
	\foreach \x/\y/\lab in {#1}
	{
		\draw (\x + 0.5,\y + 0.5) node {\lab};
	}
}
\newcommandx{\patt}[9][4={},5={},6={},7={},8={},9=4]
{
	\scalebox{#1}
	{
		\begin{tikzpicture}[baseline=(current bounding box.center)]
			\useasboundingbox (0.0,-.3) rectangle (#2+1,#2+1);
			\shadetheboxes{#4}
			\draw (0.01,0.01) grid (#2+1-0.01,#2+1-0.01);

			\drawsolidshadedbox{#6}
			\drawspecialbox{#7}
			\drawspecialboxlarge{#5}
			\drawclpattern{#3}{#9}
			\drawlabels{#8}
		\end{tikzpicture}
	}
}
\newcommandx{\cpatt}[8][4={},5={},6={},7={},8={}]
{
	\scalebox{#1}
	{
		\begin{tikzpicture}[baseline=(current bounding box.center)]
			\useasboundingbox (0.0,-.3) rectangle (#2+1,#2+1);
			\shadetheboxes{#4}
			\draw (0.01,0.01) grid (#2+1-0.01,#2+1-0.01);

			\drawsolidshadedbox{#6}
			\drawspecialbox{#7}
			\drawspecialboxlarge{#5}
			\drawclpattern{#3}{4}

			\foreach \x/\y in {#8}
			{
				\draw[line width=1] (\x,\y) circle (7 pt);
			}
		\end{tikzpicture}
	}
}
\newcommandx{\metapatt}[8][6={},7={},8={}]
{
    \scalebox{#1}
    {
        \begin{tikzpicture}[baseline=(current bounding box.center)]
					\foreach \width/\height in {#2}
					{
						\useasboundingbox (0.0,-.3) rectangle (\width+1,\height+1);
            \shadetheboxes{#6}

            \foreach \pos/\type in {#4}
            {
                \ifthenelse{\equal{\type}{v}}
                {
                    \drawverticallines{\height}{\pos}{1.7pt}
                }
                {
								    \ifthenelse{\equal{\type}{d}}
                    {
                      \draw[densely dashed] (\pos,0) -- (\pos,\height+1);
                    }
										{
											\drawhorizontallines{\width}{\pos}{1.7pt}
										}
                }
            }

            \foreach \pos/\type in {#3}
            {
                \ifthenelse{\equal{\type}{v}}
                {
                    \drawverticallines{\height}{\pos}{0.6pt}
                }
                {
										\drawhorizontallines{\width}{\pos}{0.6pt}
                }
            }

            \drawsolidshadedbox{#8}
            \drawspecialbox{#7}

            \foreach \x/\y/\type in {#5}
            {
                \ifthenelse{\equal{\type}{a}}
                {
                    \draw (\x,\y) circle (6pt);
                    \filldraw (\x,\y) circle (3pt);
                }
                {
                    \filldraw (\x,\y) circle (4pt);
                }
            }
					}
        \end{tikzpicture}
    }
}
\newcommandx{\dpatt}[9][6={},7={},8={},9={}]
{
    \scalebox{#1}
    {
        \begin{tikzpicture}[baseline=(current bounding box.center)]
					\foreach \width/\height in {#2}
					{
						\useasboundingbox (0.0,-.3) rectangle (\width+1,\height+1);
            \shadetheboxes{#6}

            \foreach \pos/\type in {#4}
            {
                \ifthenelse{\equal{\type}{v}}
                {
                    \drawverticallines{\height}{\pos}{1.7pt}
                }
                {
								    \ifthenelse{\equal{\type}{d}}
                    {
                      \draw[densely dashed] (\pos,0) -- (\pos,\height+1);
                    }
										{
											\drawhorizontallines{\width}{\pos}{1.7pt}
										}
                }
            }

            \foreach \pos/\type in {#3}
            {
                \ifthenelse{\equal{\type}{v}}
                {
                    \drawverticallines{\height}{\pos}{0.6pt}
                }
                {
										\drawhorizontallines{\width}{\pos}{0.6pt}
                }
            }

            \drawsolidshadedbox{#8}
            \drawspecialbox{#7}

            \foreach \x/\y/\type in {#5}
            {
                \ifthenelse{\equal{\type}{a}}
                {
                    \draw9 (\x,\y) circle (6pt);
                    \filldraw (\x,\y) circle (3pt);
                }
                {
                    \filldraw (\x,\y) circle (4pt);
                }
            }

						\drawlabels{#9}
					}
        \end{tikzpicture}
    }
}
\newcommand{\textmpattern}[4]{										
    \scalebox{#1}
    {
      \begin{tikzpicture}[baseline=(current bounding box.center)]
      	\useasboundingbox (0.0,0) rectangle (#2+1,#2+0);
        \shadetheboxes{#4}

        \drawthegrid{#2}

        \drawclpattern{#3}{6}

      \end{tikzpicture}
    }
}
\newcommandx{\shpatt}[7][4={},5={},6={},7={}]
{
	\scalebox{#1}
	{
		\begin{tikzpicture}[baseline=(current bounding box.center)]
			\useasboundingbox (0.0,-.3) rectangle (#2+1,#2+1);
			\shadetheboxes{#4}
			\draw (0.01,0.01) grid (#2+1-0.01,#2+1-0.01);

			\drawclpattern{#3}{4}

			\foreach \x/\y in {#5}
			{
				\draw[line width=1] (\x,\y) circle (7 pt);
			}

            \foreach \x/\y in {#6}
            {
                \draw[densely dashed, line width=1.7pt] (\x,0) -- (\x,#2+1);
                \draw[densely dashed, line width=1.7pt] (0,\y) -- (#2+1,\y);
            }

            \foreach \xa/\ya/\xb/\yb in {#7}
            {
                \draw[->, line width=1.7pt] (\xa,\ya) -- (\xb-0.12,\yb-0.12);
            }
		\end{tikzpicture}
	}
}
\newcommandx{\shpattb}[8][4={},5={},6={},7={},8={}]
{
	\scalebox{#1}
	{
		\begin{tikzpicture}[baseline=(current bounding box.center)]
			\useasboundingbox (0.0,-.3) rectangle (#2+1,#2+1);
			\shadetheboxes{#4}
			\draw (0.01,0.01) grid (#2+1-0.01,#2+1-0.01);

			\drawclpattern{#3}{4}

			\foreach \x/\y in {#5}
			{
				\draw[line width=1] (\x,\y) circle (7 pt);
			}

            \foreach \x/\y in {#6}
            {
                \draw[densely dashed, line width=1.7pt] (\x,0) -- (\x,#2+1);
                \draw[densely dashed, line width=1.7pt] (0,\y) -- (#2+1,\y);
            }

            \foreach \xa/\ya/\xb/\yb in {#7}
            {
                \draw[->, line width=1.7pt] (\xa,\ya) -- (\xb-0.12,\yb-0.12);
            }
            \foreach \xa/\ya/\xb/\yb in {#8}
            {
            	\draw[line width=1.5pt] (\xa+0.1,\ya+0.1) rectangle (\xb-0.1,\yb-0.1);
            }
		\end{tikzpicture}
	}
}
\colorlet{lightgray}{black!15}
\newcommand{\boks}[1]{{\lcorners{#1}\rcorners}}
\newcommand{\point}[1]{{\boks{#1}}}
\newcommand{\pleft}[1]{{\boks{#1}\leftarrow}}
\newcommand{\pright}[1]{{\boks{#1}\rightarrow}}
\newcommand{\pup}[1]{{\boks{#1}\uparrow}}
\newcommand{\pdown}[1]{{\boks{#1}\downarrow}}
\newcommand{\pall}[1]{{\boks{#1}\star}}
\newcommand{\av}[1]{{{\mathrm{Av}}(#1)}}
\newcommand{\co}[1]{{\mathrm{Co}(#1)}}
\newcommand{\avn}[2]{{{\mathrm{Av}_{#2}}(#1)}}
\newcommand{\occ}[2]{{\mathrm{occ}_{#1}(#2)}}
\newcommand{\strength}{\mathrm{strength}}
\newcommand{\tsa}{{\textsc{SA}}}
\newcommand{\symm}{S}
\theoremstyle{definition}
\newtheorem{theorem}{Theorem}[section]
\newtheorem{remark}[theorem]{Remark}
\newtheorem{corollary}[theorem]{Corollary}
\newtheorem{proposition}[theorem]{Proposition}
\newtheorem{definition}[theorem]{Definition}
\newtheorem{lemma}[theorem]{Lemma}
\newtheorem{conjecture}[theorem]{Conjecture}
\newtheorem{problem}[theorem]{Problem}
\numberwithin{equation}{section}
\numberwithin{figure}{section}
\newtheorem{example}[theorem]{Example}
\newcommand{\To}{\textbf{to}}
\tikzset{invisible/.style={minimum width=0mm,inner sep=0mm,outer sep=0mm}}
\tikzset{tiling/.style={invisible, anchor=north west}}
\tikzset{ptnode/.style={rounded corners=15, black!40, thick, dashed}}
\pgfmathsetmacro{\patttablescale}{1.05}
\pgfmathsetmacro{\pattdispscale}{0.475}
\pgfmathsetmacro{\patttextscale}{0.175}
\title{Algorithmic coincidence classification of mesh patterns}
\author[C.~Bean]{Christian Bean$^{\star}$}
\address{School of Computer Science, Reykjavik University, Reykjavik, Iceland}
\email{henningu@ru.is}
\author[B.~Gudmundsson]{Bjarki Gudmundsson$^{\star}$}
\address{School of Computer Science, Reykjavik University, Reykjavik, Iceland}
\email{henningu@ru.is}
\author[T.~Magnusson]{Tomas Ken Magnusson$^{\star}$}
\address{School of Computer Science, Reykjavik University, Reykjavik, Iceland}
\email{henningu@ru.is}
\author[H.~Ulfarsson]{Henning Ulfarsson$^{\star}$}
\address{Department of Computer Science, Reykjavik University, Reykjavik, Iceland}
\email{henningu@ru.is}
\thanks{$^{\star}$ Research partially
  supported by grant 141761-051 from the Icelandic Research Fund.}
\subjclass[2010]{Primary: 05A05; Secondary: 05A15}
\begin{document}

\begin{abstract}
We review and extend previous results on coincidence of mesh patterns.
We introduce the notion of a force on a permutation pattern and apply it
to the coincidence classification of mesh patterns, completing the classification
up to size three.
We also show that this concept can be used to enumerate classical
permutation classes.

\noindent \\
\emph{Keywords:} permutation, pattern, mesh pattern, pattern coincidence
\end{abstract}

\maketitle
\thispagestyle{empty}

Permutation patterns have been studied since the beginning of the
20\textsuperscript{th} century, starting with MacMahon~\cite{macmahon2001combinatory},
who considered the union of two decreasing sequences of points. Interest in
modern day study was sparked by Simion and Schmidt~\cite{simion1985restricted}. Classical
permutation patterns have been generalized to \emph{vincular}
patterns by Babson and Steingr\'{i}msson~\cite{babson2000generalized}, to \emph{bivincular}
patterns by Bousquet-M\'{e}lou et al.~\cite{bousquet20102} and to \emph{barred} patterns
by West~\cite{west1993sorting}.
The focus of this paper will be on \emph{mesh} patterns, introduced by
Br\"{a}nd\'{e}n and Claesson~\cite{branden2011mesh}, and subsume the prior generalizations.
In particular, we study the classification of
these patterns based on \emph{coincidence}, an equivalence relation derived from
the avoidance sets of the patterns. A related subject is the classification of
patterns in terms of \emph{Wilf-equivalence} where the size of the avoidance
sets determines the relation.
This line of inquiry for mesh patterns was started by
Hilmarsson et al.~\cite{shadinglemma}, who provided sufficient conditions
for the coincidence of mesh patterns with the so-called Shading lemma. This was
further generalized by Claesson, Tenner, and Ulfarsson~\cite{simshadinglemma} in the Simultaneous Shading
lemma. The relationship between the avoidance sets of mesh patterns and
classical patterns has been studied by
Tenner~\cite{tenner2013coincidental,tenner2013mesh}, who determined which
mesh patterns are coincident with classical patterns.

%
%

We will review these earlier results, and extend them using the notion of
a \emph{force} on a permutation pattern. This will culminate in the Shading Algorithm,
which is powerful enough to coincidence classify the set of mesh patterns of size $3$,
except one case which we do by hand. We show how knowing the coincidence of mesh patterns
can be used to enumerate permutation sets avoiding a classical pattern. Furthermore,
we show how the concept of a force can be applied directly to that problem.

\section{Preliminaries}
A \emph{permutation} on a set $A$ is a bijection from $A$ to itself.
In this
paper we always have $A = [1,n] = \{1,\ldots,n\}$ for some non-negative integer
$n$. We denote the set of all permutations of size $n$ as $\symm_n$ and we
write $\pi \in \symm_n$ as the word $\pi(1)\pi(2) \cdots \pi(n)$.
Two sequences of integers $a_1a_2 \cdots a_k$ and $b_1b_2 \cdots b_k$ are
\emph{order isomorphic} if $a_i < a_j$ if and only if $b_i < b_j$ for all
$i,j \in [1,k]$. The central definition in the study of permutation patterns
is the following.

\begin{definition}
  A permutation $\pi \in \symm_n$ \emph{contains} a
  permutation $p \in \symm_k$ if there exists a sequence of indices $1\leq i_1
    < i_2 < \cdots < i_k \leq n$ such that the subsequence
  $c = \pi(i_1) \pi(i_2) \cdots \pi(i_k)$ of $\pi$ is order
  isomorphic to $p$. The subsequence $c$ is called an \emph{occurrence} of
  $p$ in $\pi$. If such a subsequence does not exist then $\pi$ \emph{avoids}
  $p$. In this context $p$ is called a \emph{classical} (\emph{permutation})
  \emph{pattern}.
\end{definition}

A permutation $\pi \in \symm_n$ can be represented graphically by plotting the
points
$
\{(i,\pi(i)) \,\vert\, i \in [1,n] \}
$
on a grid such that the final figure resembles a \emph{mesh}. The elements $(i,
\pi(i))$ are referred to as the \emph{points} of the permutation $\pi$. An
example is illustrated in the first subfigure of Figure~\ref{fig:flattenex}.

An occurrence of a pattern in a permutation can be viewed in the
graphical representation as scaling the indices and the values of the pattern
to coincide with points of the permutation, while maintaining the same relative ordering. An example is
depicted in Figure~\ref{fig:flattenex} with the graph of the permutation $42135$
and one of its subsequences.
\begin{figure}[ht]
  \centering
  \raisebox{0.6ex}{%
    \patt{\pattdispscale}{3}{2,1,3}{}
  }
  \quad\quad
  \raisebox{0.6ex}{
    \begin{tikzpicture}[baseline=(current bounding box.center), scale=\pattdispscale]
      \useasboundingbox (0.0,-0.1) rectangle (5+1.4,5+1.1);
      \foreach [count=\x] \y in {4,2,1,3,5}
      \filldraw (\x,\y) circle (4pt);
      \foreach \x/\y in {1/4,3/1,5/5}
      \draw[thick,red!70!black] (\x,\y) circle (7pt);
      \draw[very thin] (1,0.01) -- (1,5.99);
      \draw[very thin] (3,0.01) -- (3,5.99);
      \draw[very thin] (5,0.01) -- (5,5.99);
      \draw[very thin] (0.01,1) -- (5.99,1);
      \draw[very thin] (0.01,4) -- (5.99,4);
      \draw[very thin] (0.01,5) -- (5.99,5);
      \draw[very thin] (3,0.01) -- (3,5.99);
      \draw[very thin] (5,0.01) -- (5,5.99);

      \draw[densely dotted, line width=0.6pt] (2,0.01) -- (2,5.99);
      \draw[densely dotted, line width=0.6pt] (4,0.01) -- (4,5.99);
      \draw[densely dotted, line width=0.6pt] (0.01,2) -- (5.99,2);
      \draw[densely dotted, line width=0.6pt] (0.01,3) -- (5.99,3);
    \end{tikzpicture}}
    \quad\quad
    \raisebox{0.6ex}{
      \begin{tikzpicture}[baseline=(current bounding box.center), scale=\pattdispscale]
        \useasboundingbox (0.0,-0.1) rectangle (3+1.4,3+1.1);
        \draw[very thin] (0.01,0.01) grid (3+0.99,3+0.99);
        \foreach [count=\x] \y in {2,1,3}
        \filldraw (\x,\y) circle (4pt);
        \foreach \x/\y in {1/2,2/1,3/3}
        \draw[thick,red!70!black] (\x,\y) circle (7pt);
        \draw[densely dotted,line width=0.6pt] (1.5,0.01) -- (1.5,3.99);
        \draw[densely dotted,line width=0.6pt] (2.5,0.01) -- (2.5,3.99);
        \draw[densely dotted,line width=0.6pt] (0.01,1.666) -- (3.99,1.666);
        \draw[densely dotted,line width=0.6pt] (0.01,1.333) -- (3.99,1.333);
        \filldraw (1.5,1.333) circle (2pt);
        \filldraw (2.5,1.666) circle (2pt);
      \end{tikzpicture}}
      \caption{The graph of the pattern $213$; an occurrence of this pattern
      within the permutation $42135$; and a graph focusing on the occurrence,
      showing the locations of the remaining points in the permutation.}
      \label{fig:flattenex}
    \end{figure}
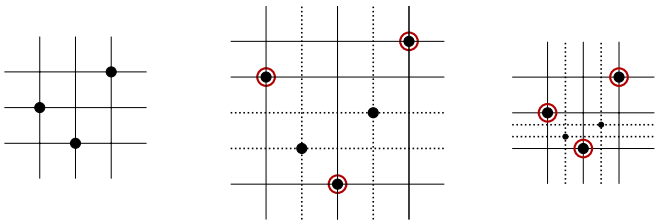

    Using the graphical representation of permutations we recall the definition
    of \emph{mesh patterns}, which are generalizations of permutation patterns
    with added restrictions. In the second subfigure of Figure~\ref{fig:flattenex},
    the points of the permutation not in the occurrence can be thought of as being
    mapped into the squares formed by the mesh.
    We can restrict when points are allowed to map
    into these squares by shading the mesh.

\begin{definition}[Br\"{a}nde\'{e}n and Claesson~\cite{branden2011mesh}]
  A \emph{mesh pattern} is an ordered pair $p=(\tau,R)$ where $\tau \in \symm_k$,
  and $R$ is a subset of the $(k+1)^2$ unit squares in
  $[0,k+1]^2$. The set $R$ is the \emph{mesh}~(shading) and squares of
  the mesh are indexed by their lower-left corners; that is, $\boks{i,j} \in
  R$ refers to the square $[i,i+1] \times [j,j+1]$. The mesh pattern
  $(\tau,R)$ is depicted graphically by drawing $\tau$ as above, and shading the
  squares of the mesh $R$. The \emph{size} of the mesh pattern $p$ is $k$ and
  denoted $|p|$.
\end{definition}

Informally, a permutation $\pi \in \symm_n$ is said to contain a mesh pattern
$p = (\tau,R)$ if $\pi$ has an occurrence of the underlying classical pattern $\tau$
such that each of the shaded squares $\boks{i,j} \in R$ correspond
to an empty region in the permutation; see Example~\ref{ex:meshinperm} and
\cite{branden2011mesh}.

\begin{example} \label{ex:meshinperm}
    Consider the mesh pattern $p = (213, \{ \boks{1,2}, \boks{2,2}, \boks{2,3}
    \})$, shown in Figure~\ref{fig:flatten2ex}. In the permutation $42135$ the
    subsequence $415$ is an occurrence of the classical pattern $213$ and the
    squares in the mesh correspond to empty regions in the permutation, as can be
    seen in the second and third subfigures in Figure~\ref{fig:flatten2ex}. Note
    that although the subsequence $215$ is an occurrence of the pattern $213$
    it does not satisfy the requirements of the mesh, as the point $3$ in the
    permutation is in the region corresponding to the shaded square
    $\boks{2,2}$.
\end{example}

\begin{figure}[ht]
  \centering
  \raisebox{0.6ex}{
  \shpattb{\pattdispscale}{3}{2,1,3}[1/2,2/2,2/3][][][][1/2/2/3, 2/2/3/3, 2/3/3/4]}
  \quad\quad
  \raisebox{0.6ex}{
    \begin{tikzpicture}[baseline=(current bounding box.center), scale=\pattdispscale]
      \useasboundingbox (0.0,-0.1) rectangle (5+1.4,5+1.1);
      \foreach \x/\y in {1/4,2/4,3/4,3/5,4/4,4/5}
        \fill[pattern color = black!65, pattern=north east lines] (\x,\y) rectangle +(1,1);
      \foreach [count=\x] \y in {4,2,1,3,5}
      \filldraw (\x,\y) circle (4pt);
      \foreach \x/\y in {1/4,3/1,5/5}
      \draw[thick,red!70!black] (\x,\y) circle (7pt);
      \draw[very thin] (1,0.01) -- (1,5.99);
      \draw[very thin] (3,0.01) -- (3,5.99);
      \draw[very thin] (5,0.01) -- (5,5.99);
      \draw[very thin] (0.01,1) -- (5.99,1);
      \draw[very thin] (0.01,4) -- (5.99,4);
      \draw[very thin] (0.01,5) -- (5.99,5);
      \draw[very thin] (3,0.01) -- (3,5.99);
      \draw[very thin] (5,0.01) -- (5,5.99);

      \draw[densely dotted, line width=0.6pt] (2,0.01) -- (2,5.99);
      \draw[densely dotted, line width=0.6pt] (4,0.01) -- (4,5.99);
      \draw[densely dotted, line width=0.6pt] (0.01,2) -- (5.99,2);
      \draw[densely dotted, line width=0.6pt] (0.01,3) -- (5.99,3);

      \foreach \xa/\ya/\xb/\yb in {1/4/3/5, 3/4/5/5, 3/5/5/6}
            {
            	\draw[line width=1pt] (\xa+0.1,\ya+0.1) rectangle (\xb-0.1,\yb-0.1);
            }
    \end{tikzpicture}}
    \quad\quad
    \raisebox{0.6ex}{
      \begin{tikzpicture}[baseline=(current bounding box.center), scale=\pattdispscale]
        \useasboundingbox (0.0,-0.1) rectangle (3+1.4,3+1.1);
        \draw[very thin] (0.01,0.01) grid (3+0.99,3+0.99);
        \foreach \x/\y in {1/2,2/2,2/3}
          \fill[pattern color = black!65, pattern=north east lines] (\x,\y) rectangle +(1,1);
        \foreach [count=\x] \y in {2,1,3}
        \filldraw (\x,\y) circle (4pt);
        \foreach \x/\y in {1/2,2/1,3/3}
        \draw[thick,red!70!black] (\x,\y) circle (7pt);
        \draw[densely dotted,line width=0.6pt] (1.5,0.01) -- (1.5,3.99);
        \draw[densely dotted,line width=0.6pt] (2.5,0.01) -- (2.5,3.99);
        \draw[densely dotted,line width=0.6pt] (0.01,1.666) -- (3.99,1.666);
        \draw[densely dotted,line width=0.6pt] (0.01,1.333) -- (3.99,1.333);
        \filldraw (1.5,1.333) circle (2pt);
        \filldraw (2.5,1.666) circle (2pt);
        \foreach \xa/\ya/\xb/\yb in {1/2/2/3, 2/2/3/3, 2/3/3/4}
            {
            	\draw[line width=1pt] (\xa+0.1,\ya+0.1) rectangle (\xb-0.1,\yb-0.1);
            }
      \end{tikzpicture}}
      \caption{The graph of the mesh pattern $(213, \{ \boks{1,2}, \boks{2,2},
      \boks{2,3} \})$; an occurrence of this pattern within the permutation
      $42135$; and a graph focusing on the occurrence.}
      \label{fig:flatten2ex}
    \end{figure}
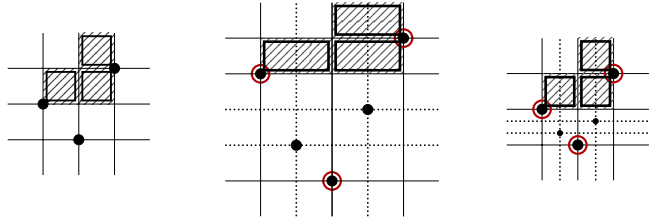

The set of permutations of size $n$ that avoid a pattern $p$ is denoted
$\avn{p}{n}$. We also define $\av{p} = \bigcup_{n = 0}^{+\infty} \avn{p}{n}$.
Its complement, the set of permutations that contain the pattern $p$, is denoted $\co{p}$.
In more generality, if $B$ is a set of patterns, we define
$\avn{B}{n}$ as the set of permutations of size $n$ that avoid all the patterns in $B$,
and $\av{B} = \bigcup_{n = 0}^{+\infty} \avn{B}{n}$. If the set $B$ is minimal, i.e.,
no $B' \subsetneq B$ with $\av{B'} = \av{B}$ exists, then $B$ is called a \emph{basis}.
When all the patterns in $B$ are
classical patterns the set $\av{B}$ is called a \emph{permutation class}.

Given two different classical patterns, $p$, $q$, it is never true that $\av{p}
= \av{q}$. This property does not hold for mesh patterns. For example,
the patterns $\textmpattern{\patttextscale}{2}{2,1}{}$ and
$\textmpattern{\patttextscale}{2}{2,1}{1/0,1/1,1/2}$ are
different mesh patterns that have the same avoiding permutations. In fact, this
is an equivalent way of stating that a permutation has an inversion if and only if
it has a descent. This leads to the following definition.

\begin{definition}
	Two patterns $p$ and $q$ are said to be \emph{coincident} if $\av{p} = \av{q}$,
	denoted $p \asymp q$.
\end{definition}

Note that if two mesh patterns $p$ and $q$ are coincident then they necessarily have the
same underlying classical patterns.
Recently Tannock and Ulfarsson~\cite[Definition 2.6]{tannock2017equivalence} defined occurrences of
mesh patterns
in mesh patterns.  Informally, the added restriction is that for a shaded square
in the occurring pattern, the corresponding squares in the containing pattern
must all be shaded; see Example~\ref{ex:meshinmesh} and \cite{tannock2017equivalence}.

\begin{figure}[ht]
  \centering
  \raisebox{0.6ex}{
  \shpattb{\pattdispscale}{3}{2,1,3}[1/2,2/2,2/3][][][][1/2/2/3, 2/2/3/3, 2/3/3/4]}
  \quad\quad
  \raisebox{0.6ex}{
    \begin{tikzpicture}[baseline=(current bounding box.center), scale=\pattdispscale]
      \useasboundingbox (0.0,-0.1) rectangle (5+1.4,5+1.1);

      \foreach \x/\y in {0/0,0/1,0/2,1/4,2/4,3/4,3/5,4/4,4/5,3/3,4/3,4/0,5/0}
        \fill[color = black!10] (\x,\y) rectangle +(1,1);
      \foreach \x/\y in {1/4,2/4,3/4,3/5,4/4,4/5}
        \fill[pattern color = black!65, pattern=north east lines] (\x,\y) rectangle +(1,1);

      \foreach [count=\x] \y in {4,2,1,3,5}
      \filldraw (\x,\y) circle (4pt);
      \foreach \x/\y in {1/4,3/1,5/5}
      \draw[thick,red!70!black] (\x,\y) circle (7pt);
      \draw[very thin] (1,0.01) -- (1,5.99);
      \draw[very thin] (3,0.01) -- (3,5.99);
      \draw[very thin] (5,0.01) -- (5,5.99);
      \draw[very thin] (0.01,1) -- (5.99,1);
      \draw[very thin] (0.01,4) -- (5.99,4);
      \draw[very thin] (0.01,5) -- (5.99,5);
      \draw[very thin] (3,0.01) -- (3,5.99);
      \draw[very thin] (5,0.01) -- (5,5.99);

      \draw[densely dotted, line width=0.6pt] (2,0.01) -- (2,5.99);
      \draw[densely dotted, line width=0.6pt] (4,0.01) -- (4,5.99);
      \draw[densely dotted, line width=0.6pt] (0.01,2) -- (5.99,2);
      \draw[densely dotted, line width=0.6pt] (0.01,3) -- (5.99,3);

      \foreach \xa/\ya/\xb/\yb in {1/4/3/5, 3/4/5/5, 3/5/5/6}
            {
            	\draw[line width=1pt] (\xa+0.1,\ya+0.1) rectangle (\xb-0.1,\yb-0.1);
            }
    \end{tikzpicture}}
    \quad\quad
    \raisebox{0.6ex}{
      \begin{tikzpicture}[baseline=(current bounding box.center), scale=\pattdispscale]
        \useasboundingbox (0.0,-0.1) rectangle (3+1.4,3+1.1);

        \foreach \x/\y in {0/0,1/2,2/2,2/3,3/0}
          \fill[color = black!10] (\x,\y) rectangle +(1,1);
        \fill[color = black!10] (0,1) rectangle +(1,0.666);
        \fill[color = black!10] (2.5,0) rectangle +(0.5,1);
        \fill[color = black!10] (2,1.666) rectangle +(1,0.3333);
        \foreach \x/\y in {1/2,2/2,2/3}
          \fill[pattern color = black!65, pattern=north east lines] (\x,\y) rectangle +(1,1);

        \draw[very thin] (0.01,0.01) grid (3+0.99,3+0.99);
        \foreach [count=\x] \y in {2,1,3}
        \filldraw (\x,\y) circle (4pt);
        \foreach \x/\y in {1/2,2/1,3/3}
        \draw[thick,red!70!black] (\x,\y) circle (7pt);
        \draw[densely dotted,line width=0.6pt] (1.5,0.01) -- (1.5,3.99);
        \draw[densely dotted,line width=0.6pt] (2.5,0.01) -- (2.5,3.99);
        \draw[densely dotted,line width=0.6pt] (0.01,1.666) -- (3.99,1.666);
        \draw[densely dotted,line width=0.6pt] (0.01,1.333) -- (3.99,1.333);
        \filldraw (1.5,1.333) circle (2pt);
        \filldraw (2.5,1.666) circle (2pt);

        \foreach \xa/\ya/\xb/\yb in {1/2/2/3, 2/2/3/3, 2/3/3/4}
            {
            	\draw[line width=1pt] (\xa+0.1,\ya+0.1) rectangle (\xb-0.1,\yb-0.1);
            }
      \end{tikzpicture}}
  \caption{The graph of the mesh pattern $(213, \{ \boks{1,2}, \boks{2,2},
    \boks{2,3} \})$; an occurrence of this pattern within the mesh pattern
  $m$, defined in Example~\ref{ex:meshinmesh};
  and a graph focusing on the occurrence.}
  \label{fig:flattenmeshmeshex}
\end{figure}
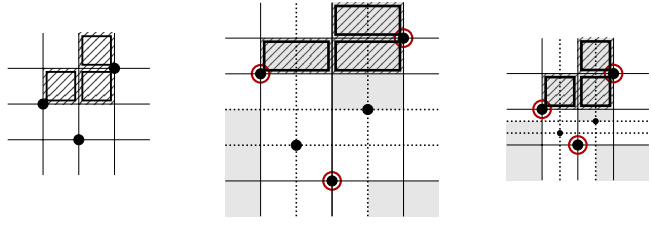

\begin{example} \label{ex:meshinmesh}
    Consider the mesh pattern $p = (213, \{ \boks{1,2}, \boks{2,2}, \boks{2,3}
      \})$, shown in Figure~\ref{fig:flattenmeshmeshex}. In the mesh pattern $m
      = (42135,$ $\{ \boks{0,0},$ $\boks{0,1},$ $\boks{0,2},$ $\boks{1,4},$
        $\boks{2,4},$ $\boks{3,3},$ $\boks{3,4},$ $\boks{3,5},$ $\boks{4,0},$
      $\boks{4,3},$ $\boks{4,4},$ $\boks{4,5},$ $\boks{5,0} \})$ the
      subsequence $415$ is an occurrence of the classical pattern $213$ and the
      squares in the mesh of $p$ correspond to regions in $m$ that are shaded
      and do not contain any points, as can be seen in the second and third
      subfigures in Figure~\ref{fig:flattenmeshmeshex}.
\end{example}

The following remark follows from the previous definitions.
\begin{remark} \label{rem:meshinmeshinperm}
    If a mesh pattern $m$ contains a mesh pattern $p$ and a permutation $\pi$
    contains $m$, then $\pi$ also contains $p$.
\end{remark}

To compare two occurrences of a mesh pattern in a permutation, or
in another mesh pattern, we need the following definition.

\begin{definition}\label{def:compare}
    Let $p=(\tau,R)$ and $q=(\sigma,T)$ be mesh patterns. If $u=\tau(u_1)$ $\dotsm
    \tau(u_k)$ and $v=\tau(v_1)\dotsm \tau(v_k)$ are occurrences of $q$ in
    $p$, then we say that
    \begin{itemize}
        \item $u$ is \emph{above} $v$ with respect to the point $(i,\sigma(i))$ if $\tau(u_i)>\tau(v_i)$,
        \item $u$ is \emph{below} $v$ with respect to the point $(i,\sigma(i))$ if $\tau(u_i)<\tau(v_i)$,
        \item $u$ is \emph{left of} $v$ with respect to the point $(i,\sigma(i))$ if $u_i<v_i$, and,
        \item $u$ is \emph{right of} $v$ with respect to the point $(i,\sigma(i))$ if $u_i>v_i$.
    \end{itemize}
\end{definition}

In Figure~\ref{fig:below} are the occurrences $u=415$ and $v=435$ of the
mesh pattern $(213,\{\boks{1,2},\boks{2,2},\boks{2,3}\})$. The occurrence $u$ is below
$v$ and $u$ is left of $v$ with respect to the point $(2,1)$.

\begin{figure}[hb]
  \centering
  \raisebox{0.6ex}{
  \shpattb{\pattdispscale}{3}{2,1,3}[1/2,2/2,2/3][][][][1/2/2/3, 2/2/3/3, 2/3/3/4]}
  \quad\quad
  \raisebox{0.6ex}{
    \begin{tikzpicture}[baseline=(current bounding box.center), scale=\pattdispscale]
      \useasboundingbox (0.0,-0.1) rectangle (5+1.4,5+1.1);
      \foreach \x/\y in {1/4,2/4,3/4,3/5,4/4,4/5}
        \fill[pattern color = black!65, pattern=north east lines] (\x,\y) rectangle +(1,1);
      \foreach [count=\x] \y in {4,2,1,3,5}
      \filldraw (\x,\y) circle (4pt);
      \foreach \x/\y in {1/4,3/1,5/5}
      \draw[thick,red!70!black] (\x,\y) circle (7pt);
      \draw[very thin] (1,0.01) -- (1,5.99);
      \draw[very thin] (3,0.01) -- (3,5.99);
      \draw[very thin] (5,0.01) -- (5,5.99);
      \draw[very thin] (0.01,1) -- (5.99,1);
      \draw[very thin] (0.01,4) -- (5.99,4);
      \draw[very thin] (0.01,5) -- (5.99,5);
      \draw[very thin] (3,0.01) -- (3,5.99);
      \draw[very thin] (5,0.01) -- (5,5.99);

      \draw[densely dotted, line width=0.6pt] (2,0.01) -- (2,5.99);
      \draw[densely dotted, line width=0.6pt] (4,0.01) -- (4,5.99);
      \draw[densely dotted, line width=0.6pt] (0.01,2) -- (5.99,2);
      \draw[densely dotted, line width=0.6pt] (0.01,3) -- (5.99,3);

      \foreach \xa/\ya/\xb/\yb in {1/4/3/5, 3/4/5/5, 3/5/5/6}
            {
            	\draw[line width=1pt] (\xa+0.1,\ya+0.1) rectangle (\xb-0.1,\yb-0.1);
            }
    \end{tikzpicture}}
    \quad\quad
    \raisebox{0.6ex}{
    \begin{tikzpicture}[baseline=(current bounding box.center), scale=\pattdispscale]
      \useasboundingbox (0.0,-0.1) rectangle (5+1.4,5+1.1);
      \foreach \x/\y in {1/4,2/4,3/4,4/4,4/5}
        \fill[pattern color = black!65, pattern=north east lines] (\x,\y) rectangle +(1,1);
      \foreach [count=\x] \y in {4,2,1,3,5}
      \filldraw (\x,\y) circle (4pt);
      \foreach \x/\y in {1/4,4/3,5/5}
      \draw[thick,red!70!black] (\x,\y) circle (7pt);
      \draw[very thin] (1,0.01) -- (1,5.99);
      \draw[very thin] (3,0.01) -- (3,5.99);
      \draw[very thin] (5,0.01) -- (5,5.99);
      \draw[very thin] (0.01,1) -- (5.99,1);
      \draw[very thin] (0.01,4) -- (5.99,4);
      \draw[very thin] (0.01,5) -- (5.99,5);
      \draw[very thin] (3,0.01) -- (3,5.99);
      \draw[very thin] (5,0.01) -- (5,5.99);

      \draw[densely dotted, line width=0.6pt] (2,0.01) -- (2,5.99);
      \draw[densely dotted, line width=0.6pt] (4,0.01) -- (4,5.99);
      \draw[densely dotted, line width=0.6pt] (0.01,2) -- (5.99,2);
      \draw[densely dotted, line width=0.6pt] (0.01,3) -- (5.99,3);

        \foreach \xa/\ya/\xb/\yb in {1/4/4/5, 4/4/5/5, 4/5/5/6}
            {
            	\draw[line width=1pt] (\xa+0.1,\ya+0.1) rectangle (\xb-0.1,\yb-0.1);
            }
    \end{tikzpicture}}
      \caption{The mesh pattern $(213, \{ \boks{1,2}, \boks{2,2},
      \boks{2,3} \})$ and two occurrences of it in the permutation
      $42135$.}
      \label{fig:below}
    \end{figure}
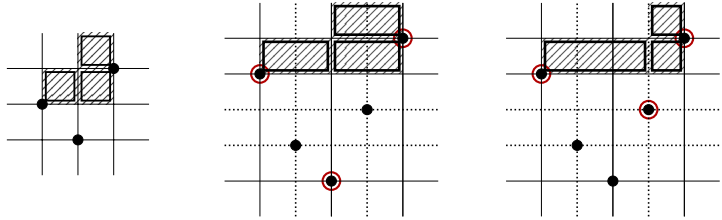

We will need to add points to mesh patterns as is formally defined in
Tannock and Ulfarsson~\cite[Definition~3.2]{tannock2017equivalence} and illustrated here in
Figure~\ref{fig:insertex}.
  \begin{figure}[ht]
    \centering
    \raisebox{0.6ex}{
    \begin{tikzpicture}[baseline=(current bounding box.center), scale=\pattdispscale]
      \useasboundingbox (0.0,-0.1) rectangle (3+1.4,3+1.1);
      \draw (0.01,0.01) grid (3+0.99,3+0.99);
      \foreach \x/\y in {1/2,2/2,2/3,0/1}
        \fill[pattern color=black!65, pattern=north east lines] (\x,\y) rectangle +(1,1);
      \foreach [count=\x] \y in {2,1,3}
        \filldraw (\x,\y) circle (3pt);
      \filldraw (2.5,1.5) circle (2pt);
      \draw[line width=0.8pt, densely dotted] (2.5,0.01) -- (2.5,3.99);
      \draw[line width=0.8pt, densely dotted] (0.01,1.5) -- (3.99,1.5);
    \end{tikzpicture}}
    \quad\quad
    \raisebox{0.6ex}{
    \begin{tikzpicture}[baseline=(current bounding box.center), scale=\pattdispscale]
      \useasboundingbox (0.0,-0.1) rectangle (4+1.4,4+1.1);
      \draw (0.01,0.01) grid (4.99,4.99);
      \foreach \x/\y in {0/1,0/2,1/3,2/3,2/4,3/3,3/4}
        \fill[pattern color=black!65, pattern=north east lines] (\x,\y) rectangle +(1,1);
      \foreach [count=\x] \y in {3,1,2,4}
        \filldraw (\x,\y) circle (3pt);
    \end{tikzpicture}}
    \caption{Adding a point to the mesh pattern
      $p = (213,\{\boks{0,1},\boks{1,2},\boks{2,2},\boks{2,3}\})$, to create the
      mesh pattern $p^\point{2,1}$.}
    \label{fig:insertex}
  \end{figure}
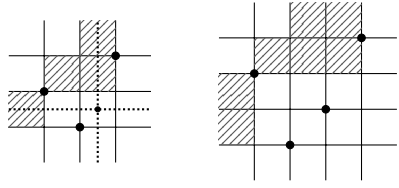
As in \cite{tannock2017equivalence} we let $p=(\tau,R)$ be a
mesh pattern such that $\boks{i,j} \not\in R$, and $p^\point{i,j} =
(\tau',T')$ be the mesh pattern with a point inserted into the square $\boks{i,j}$.
We define the following four mesh patterns, which have the same
underlying classical pattern as $p^\point{i,j}$:
    \begin{align*}
      p^\pup{i,j}    =& (\tau',T' \cup \{\boks{i,j+1},\boks{i+1,j+1} \}) \\
      p^\pdown{i,j}  =& (\tau',T' \cup \{\boks{i,j},\boks{i+1,j} \}) \\
      p^\pleft{i,j}  =& (\tau',T' \cup \{\boks{i,j},\boks{i,j+1} \}) \\
      p^\pright{i,j} =& (\tau',T' \cup \{\boks{i+1,j},\boks{i+1,j+1} \})
  \end{align*}
Informally, these patterns are obtained by placing the highest, lowest, leftmost,
or rightmost point in $\boks{i,j}$. We collect these mesh patterns in a set
\[
  p^\pall{i,j}   = \{p^\pup{i,j}, p^\pdown{i,j}, p^\pleft{i,j}, p^\pright{i,j} \}
\]
shown in Figure~\ref{fig:pointdirect} for the case $\tau = 21$ and $\boks{i,j} = \boks{1,1}$.

  \begin{figure}[hb]
    \centering
    \def\pinexsz{0.6}
    \raisebox{0.6ex}{
    \begin{tikzpicture}[baseline=(current bounding box.center), scale=.9]
      \useasboundingbox (\pinexsz,\pinexsz-0.1) rectangle (3-\pinexsz,3-\pinexsz);
      \fill[pattern color=black!65, pattern=north east lines] (1,1.5) rectangle +(1.0,0.5);
      \draw (\pinexsz,\pinexsz) grid (2.99-\pinexsz,2.99-\pinexsz);
      \foreach [count=\x] \y in {2,1}
        \filldraw (\x,\y) circle (3pt);
      \filldraw (1.5,1.5) circle (2pt);
      \draw[line width=0.8pt,densely dotted] (1.5,\pinexsz+0.01)-- (1.5,2.99-\pinexsz);
      \draw[line width=0.8pt,densely dotted] (\pinexsz+0.01,1.5)-- (2.99-\pinexsz,1.5);
    \end{tikzpicture}}
    \raisebox{0.6ex}{
    \quad\quad
    \begin{tikzpicture}[baseline=(current bounding box.center), scale=.9]
      \useasboundingbox (\pinexsz,\pinexsz-0.1) rectangle (3-\pinexsz,3-\pinexsz);
      \fill[pattern color=black!65, pattern=north east lines] (1,1) rectangle +(1.0,0.5);
      \draw (\pinexsz,\pinexsz) grid (2.99-\pinexsz,2.99-\pinexsz);
      \foreach [count=\x] \y in {2,1}
        \filldraw (\x,\y) circle (3pt);
      \filldraw (1.5,1.5) circle (2pt);
      \draw[line width=0.8pt,densely dotted] (1.5,\pinexsz+0.01)-- (1.5,2.99-\pinexsz);
      \draw[line width=0.8pt,densely dotted] (\pinexsz+0.01,1.5)-- (2.99-\pinexsz,1.5);
    \end{tikzpicture}}
    \quad\quad
    \raisebox{0.6ex}{
    \begin{tikzpicture}[baseline=(current bounding box.center), scale=.9]
      \useasboundingbox (\pinexsz,\pinexsz-0.1) rectangle (3-\pinexsz,3-\pinexsz);
      \fill[pattern color=black!65, pattern=north east lines] (1,1) rectangle +(0.5,1.0);
      \draw (\pinexsz,\pinexsz) grid (2.99-\pinexsz,2.99-\pinexsz);
      \foreach [count=\x] \y in {2,1}
        \filldraw (\x,\y) circle (3pt);
      \filldraw (1.5,1.5) circle (2pt);
      \draw[line width=0.8pt,densely dotted] (1.5,\pinexsz+0.01)-- (1.5,2.99-\pinexsz);
      \draw[line width=0.8pt,densely dotted] (\pinexsz+0.01,1.5)-- (2.99-\pinexsz,1.5);
    \end{tikzpicture}}
    \raisebox{0.6ex}{
    \quad\quad
    \begin{tikzpicture}[baseline=(current bounding box.center), scale=.9]
      \useasboundingbox (\pinexsz,\pinexsz-0.1) rectangle (3-\pinexsz,3-\pinexsz);
      \fill[pattern color=black!65, pattern=north east lines] (1.5,1) rectangle +(0.5,1.0);
      \draw (\pinexsz,\pinexsz) grid (2.99-\pinexsz,2.99-\pinexsz);
      \foreach [count=\x] \y in {2,1}
        \filldraw (\x,\y) circle (3pt);
      \filldraw (1.5,1.5) circle (2pt);
      \draw[line width=0.8pt,densely dotted] (1.5,\pinexsz+0.01)-- (1.5,2.99-\pinexsz);
      \draw[line width=0.8pt,densely dotted] (\pinexsz+0.01,1.5)-- (2.99-\pinexsz,1.5);
    \end{tikzpicture}}
    \caption{The set $p^\pall{i,j}$ for $p = 21$ and $\boks{i,j} = (1,1)$.\label{fig:pointdirect}}
  \end{figure}

We record the following easily proven statement for future reference.
\begin{remark}\label{rem:addpoint}
  Let $p=(\tau,R)$ be a mesh pattern such that $\boks{i,j} \not\in R$. A
  permutation $\pi$ that contains $p$ either contains $(\tau,R \cup
  \boks{i,j})$ or all of the patterns in $p^\pall{i,j}$.
\end{remark}

Given a mesh pattern $p$ it is clear that $p$ has an occurrence in $p$.
The indices of this occurrence, properly translated, give an occurrence of $p$
in $p^{\boks{i,j}a}$, for any $a \in \{\uparrow,\downarrow,\leftarrow,\rightarrow\}$).
We call this the \emph{trivial occurrence}. Other occurrences of $p$ in $p^{\boks{i,j}a}$
are called \emph{non-trivial}. It follows that a non-trivial occurrence contains the
inserted point $(i+1,j+1)$.

In Sections~\ref{sec:extending} and~\ref{sec:tsa} we will review existing methods
to prove that two mesh patterns are coincident and extend these methods. The
central idea will be the definition of a \emph{force} on a permutation pattern, given in
Definition~\ref{def:force}. This idea will be used to enumerate several
permutation classes in Section~\ref{sec:enum}.

\section{Extending the Shading Lemmas}\label{sec:extending}

Several proofs will make use of the following, easily proven, property.
\begin{remark} \label{rem:subsetshading}
    Let $p = (\tau, R)$ and $q = (\tau, S)$ be two mesh patterns with the
    same underlying classical pattern. If $S \subseteq R$ and a
    permutation $\pi$ contains the pattern $p$ then it contains $q$, in
    other words $\av{q} \subseteq \av{p}$.
\end{remark}

We start by recalling the Shading Lemma, from Hilmarsson et al.~\cite[Lemma 11]{shadinglemma},
and give an alternative (sketch of a) proof.

\begin{lemma}[Shading Lemma] \label{lem:shading}
Let $(\tau,R)$ be a mesh pattern of size $n$ such that $\tau(i)=j$ and the
square $\boks{i,j} \not\in R$. If all of the following conditions are satisfied:
\begin{enumerate}
  \item \label{lem:shading:req1} The square $\boks{i-1,j-1}$ is not in $R$;
  \item \label{lem:shading:req2} At most one of the squares $\boks{i,j-1}$,
    $\boks{i-1,j}$ is in $R$;
  \item \label{lem:shading:req3} If the square $\boks{\ell,j-1}$ is in $R$
    (with $\ell \not\in \{i-1,i\}$) then the square $\boks{\ell,j}$ is also in $R$;
  \item \label{lem:shading:req4} If the square $\boks{i-1,\ell}$ is in $R$
    (with $\ell \not\in \{j-1, j\}$) then the square $\boks{i,\ell}$ is also in $R$;
\end{enumerate}
then the patterns $(\tau,R)$ and $(\tau,R\cup\{\boks{i,j} \})$ are coincident.
Symmetric conditions determine if other squares neighboring the point $(i,j)$ can
be added to $R$ while preserving the coincidence of the corresponding patterns.
\end{lemma}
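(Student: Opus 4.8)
The plan is to prove equality of the two avoidance sets by mutual containment, one direction being immediate and the other carrying all the content. Since $R \subseteq R \cup \{\boks{i,j}\}$, Remark~\ref{rem:subsetshading} yields $\av{(\tau,R)} \subseteq \av{(\tau, R\cup\{\boks{i,j}\})}$ for free: adding shading can only enlarge the set of avoiders. So the whole argument is the reverse inclusion, which I would establish in contrapositive form, namely that if a permutation $\pi$ contains $(\tau,R)$, then $\pi$ contains $(\tau,R\cup\{\boks{i,j}\})$.

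Fix an occurrence of $(\tau,R)$ in $\pi$ and let $P$ be the point playing the role of $(i,j)$. If the region of $\pi$ corresponding to $\boks{i,j}$ (the open rectangle just above and to the right of $P$, bounded by the neighbouring occurrence points) is empty, this occurrence already witnesses $(\tau,R\cup\{\boks{i,j}\})$ and there is nothing to do. Otherwise I would select a suitable extreme point $Q$ of $\pi$ inside that region and form a new selection using $Q$ in place of $P$. Because $Q$ lies strictly between the occurrence points bounding the region both horizontally and vertically, $Q$ keeps position-rank $i$ and value-rank $j$ among the chosen points, so the new selection is again order isomorphic to $\tau$.

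The crux is to check that every square of $R$ stays empty after replacing $P$ by $Q$. Moving the chosen point up and to the right slides the vertical grid line of column $i$ rightward and the horizontal line of row $j$ upward, and a point of $\pi$ changes its square precisely when it lies in the thin vertical band $(P_x,Q_x)$ (flipping column $i \to i-1$) or the thin horizontal band $(P_y,Q_y)$ (flipping row $j \to j-1$); every other point keeps its square. I would then dispose of the finitely many squares that can receive a relocated point. The old point $P$, together with every point caught in both bands and every relocated point in row $j-1$ or column $i-1$ nearest the corner, lands in the lower-left square $\boks{i-1,j-1}$, which is unshaded by condition~\ref{lem:shading:req1}. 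A point that merely flips column, leaving some $\boks{i,\ell}$ for $\boks{i-1,\ell}$ with $\ell \notin \{j-1,j\}$, is harmless because the contrapositive of condition~\ref{lem:shading:req4} says $\boks{i,\ell}\notin R$ forces $\boks{i-1,\ell}\notin R$; symmetrically, condition~\ref{lem:shading:req3} handles points that merely flip row with column $\ell \notin \{i-1,i\}$. This leaves only the upper-left square $\boks{i-1,j}$ and the lower-right square $\boks{i,j-1}$, and here condition~\ref{lem:shading:req2} enters: at most one of them is shaded, so I would take $Q$ to be the lowest point of the region when $\boks{i,j-1}$ is the shaded one (killing every candidate that would fall into $\boks{i,j-1}$) and the leftmost point when $\boks{i-1,j}$ is the shaded one, keeping the shaded corner empty.

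Finally I would argue termination. After the swap, the new region corresponding to $\boks{i,j}$ is a proper sub-rectangle of the old one with $Q$ removed, so it contains strictly fewer points of $\pi$; iterating the move therefore reaches, in finitely many steps, an occurrence whose $\boks{i,j}$-region is empty, that is, an occurrence of $(\tau,R\cup\{\boks{i,j}\})$. The statement for the other three neighbouring squares then follows by applying the same argument to the images of $(\tau,R)$ under the dihedral symmetries of the square grid. I expect the delicate step to be the corner analysis of the third paragraph: verifying that the single permitted choice of extreme point $Q$ is exactly what keeps whichever of $\boks{i-1,j}$, $\boks{i,j-1}$ happens to be shaded free of relocated points, since this is where all four hypotheses must interlock.
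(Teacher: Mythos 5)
Your argument is correct, but it is not the route the paper takes. You give the direct, constructive proof: swap the occurrence point $P$ for an extreme point $Q$ of the non-empty region, track which points of $\pi$ change squares (only those in the two thin bands between $P$ and $Q$), dispatch the relocated points via conditions \eqref{lem:shading:req1}, \eqref{lem:shading:req3}, \eqref{lem:shading:req4}, use condition \eqref{lem:shading:req2} to pick the extreme direction that protects whichever of $\boks{i-1,j}$, $\boks{i,j-1}$ is shaded, and iterate until the region empties. This is essentially the original Hilmarsson et al.\ argument that the paper describes around Figure~\ref{fig:oldproof} but deliberately does not adopt; your only deviation from that original is choosing the lowest/leftmost point rather than the highest/rightmost one, which costs you the explicit termination argument (the original choice empties the relevant sub-region in a single step). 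The paper instead derives Lemma~\ref{lem:shading} as a corollary of Lemma~\ref{lem:tsa1} via Lemma~\ref{lem:tsa1impliessl}: one fixes the \emph{globally} extremal occurrence with respect to a one-point force and derives a contradiction from the mere existence of a point in the region, so no iteration and no permutation-level corner analysis is needed --- the case analysis is pushed into verifying, at the level of the finite pattern $p^{\pdown{i,j}}$, that a non-trivial occurrence of $p$ exists. Your approach buys self-containedness and elementarity; the paper's buys a reusable mechanism (forces and non-trivial occurrences) that it then strengthens into Lemmas~\ref{lem:tsa2} and~\ref{lem:tsa3} and the Shading Algorithm. The one place to be careful in your write-up is the third paragraph: the phrase ``every relocated point in row $j-1$ or column $i-1$ nearest the corner'' should be made precise (these are the points lying in exactly one band whose other coordinate falls on the $P$-side), but the underlying case analysis is sound.
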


The conditions of the above lemma are satisfied for the mesh pattern $p = (12,$
$\{\boks{0,2},$ $\boks{1,0},$ $\boks{2,0},$ $\boks{2,1}\})$, shown on the left
below, and the square $\boks{2,2}$. The lemma therefore implies the coincidence of
$p$ with the pattern $q$ on the right.
\[
  p = \shpatt{\pattdispscale}{2}{1,2}[0/2,1/0,2/0,2/1] \quad \asymp \quad
  \shpatt{\pattdispscale}{2}{1,2}[0/2,1/0,2/0,2/2,2/1] = q
\]
The argument used in the original proof relies on replacing the point corresponding to
$2$ in an occurrence of the pattern with the rightmost (or highest) point in
the region corresponding to the square $\boks{2,2}$. See Figure~\ref{fig:oldproof}, where
we have an occurrence of $p$ in the permutation $12536487$ from which we
produce an occurrence of $q$.
\begin{figure}
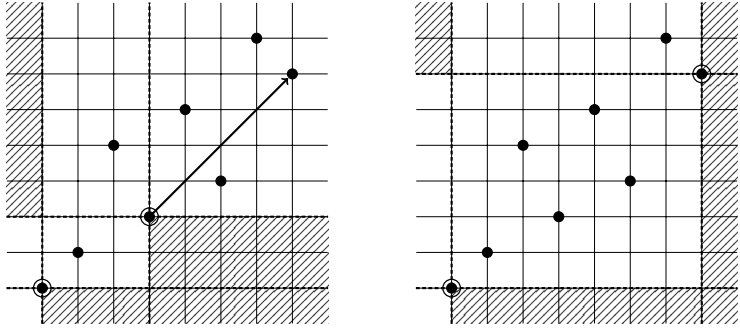

	\[
  \shpatt{\pattdispscale}{8}{1,2,5,3,6,4,8,7}[1/0,2/0,3/0,4/0,5/0,6/0,7/0,8/0,0/3,0/4,0/5,0/6,0/7,0/8,4/1,5/1,6/1,7/1,8/1,4/2,5/2,6/2,7/2,8/2][1/1,4/3][1/1,4/3][4/3/8/7]
  \quad\quad\quad
  \shpatt{\pattdispscale}{8}{1,2,5,3,6,4,8,7}[1/0,2/0,3/0,4/0,5/0,6/0,7/0,8/0,0/7,0/8,8/7,8/8,8/1,8/2,8/3,8/4,8/5,8/6][1/1,8/7][1/1,8/7][]
	\]
    \caption{Choosing the rightmost point in a region, as in the original proof of the
    Shading Lemma.\label{fig:oldproof}}
\end{figure}
The motivation for Lemma~\ref{lem:tsa1} is an alternative argument for
the coincidence of the two patterns. Consider again an occurrence of $p$ in the
same permutation, as in Figure~\ref{fig:newproof}. Out of all the occurrences of $p$ consider the occurrence
where the point corresponding to $2$ is as far to the right as possible. If the
square $\boks{2,2}$ is not empty in this occurrence then taking the lowest point in it
(for example) as a new $2$ would give us another occurrence of $p$ with the point
corresponding to $2$ further to the right, a contradiction. Therefore the square
$\boks{2,2}$ is empty and this occurrence of $p$ must also be an occurrence of $q$.
\begin{figure}
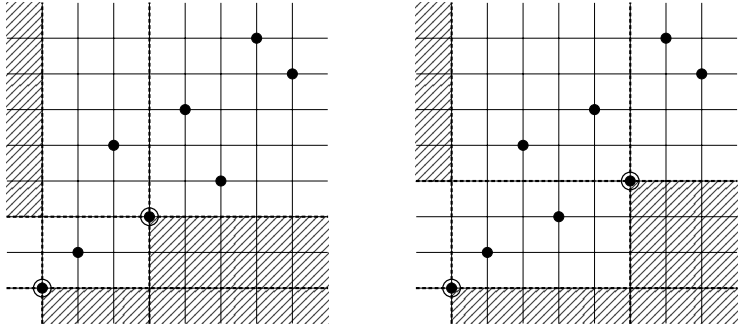

\[
	  \shpatt{\pattdispscale}{8}{1,2,5,3,6,4,8,7}[1/0,2/0,3/0,4/0,5/0,6/0,7/0,8/0,0/3,0/4,0/5,0/6,0/7,0/8,4/1,5/1,6/1,7/1,8/1,4/2,5/2,6/2,7/2,8/2][1/1,4/3][1/1,4/3][]
  \quad\quad\quad
  \shpatt{\pattdispscale}{8}{1,2,5,3,6,4,8,7}[1/0,2/0,3/0,4/0,5/0,6/0,7/0,8/0,0/4,0/5,0/6,0/7,0/8,6/1,7/1,8/1,6/2,7/2,8/2,6/3,7/3,8/3][1/1,6/4][1/1,6/4][]
\]
    \caption{If the region is not empty, we can derive a contradiction.\label{fig:newproof}}
\end{figure}
\begin{lemma}
  \label{lem:tsa1}
  Let $p = (\tau,R)$ be a mesh pattern with $\boks{i,j} \not\in R$. If a mesh
  pattern in the set $p^{\pall{i,j}}$ contains a non-trivial occurrence of $p$,
  then $p$ and $q = (\tau,R\cup\{\boks{i,j}\})$ are coincident.
\end{lemma}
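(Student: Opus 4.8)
The plan is to prove the two inclusions $\av{p} \subseteq \av{q}$ and $\av{q} \subseteq \av{p}$ separately. The first is immediate: since $R \subseteq R \cup \{\boks{i,j}\}$, Remark~\ref{rem:subsetshading} applied to the pair $q$, $p$ gives $\av{p} \subseteq \av{q}$. So the whole content of the lemma is the reverse inclusion $\av{q} \subseteq \av{p}$, equivalently: every permutation $\pi$ that contains $p$ also contains $q$. As $q$ differs from $p$ only by the extra shaded square $\boks{i,j}$, it suffices to produce, for each such $\pi$, an occurrence of $p$ in $\pi$ whose region corresponding to $\boks{i,j}$ is empty, since that occurrence is then automatically an occurrence of $q$. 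I would establish this by the extremal argument sketched before the statement (cf.\ Figure~\ref{fig:newproof}).

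By the symmetry of the four constructions I may assume the hypothesis holds for $m = p^{\pright{i,j}}$, the remaining cases being identical after the appropriate reflection. Fix $\pi \in \co{p}$; since $\pi$ is finite its occurrences of $p$ form a finite nonempty set, so I may choose one, say $O$, that is extremal with respect to a suitable order rewarding occurrences whose relevant point lies as far to the right as possible. I claim the region $\rho$ of $O$ corresponding to $\boks{i,j}$ is empty. Suppose not, and let $x$ be the rightmost point of $\pi$ lying in $\rho$. Then $O$ together with $x$ is an occurrence of $m$ in $\pi$: the squares of $m$ inherited from $R$ are empty because $O$ is an occurrence of $p$, while the two extra squares $\boks{i+1,j}$, $\boks{i+1,j+1}$ shaded in $m$ are empty precisely because $x$ was chosen rightmost in $\rho$.

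Now I would invoke the hypothesis. By assumption $m$ contains a non-trivial occurrence of $p$, which in particular uses the inserted point of $m$ and therefore drops exactly one of the original points of $\tau$. Composing this occurrence with the occurrence of $m$ in $\pi$ found above, in the manner of Remark~\ref{rem:meshinmeshinperm} but tracking images of points, yields an occurrence $O'$ of $p$ in $\pi$ that uses $x$ and omits one point of $O$. The final step is to check that $O'$ is strictly larger than $O$ in the chosen order, contradicting the extremality of $O$; hence $\rho$ is empty and $O$ is the desired occurrence of $q$. Together with the easy inclusion this gives $\av{p}=\av{q}$, that is $p \asymp q$. (Remark~\ref{rem:addpoint} provides a convenient alternative entry point: either $\pi$ already contains $q$, in which case we are done, or $\pi$ contains every pattern in $p^{\pall{i,j}}$, and in particular $m$, before one runs the same reconfiguration.)

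The main obstacle is this last step: defining an order on occurrences for which a non-empty $\rho$ forces a strictly larger occurrence, and verifying that strict increase. This is exactly where the \emph{direction} of the inserted point enters --- the fact that the non-trivial occurrence lives in $p^{\pright{i,j}}$ (rather than in some other element of $p^{\pall{i,j}}$) is what pins down which order statistic strictly increases and guarantees that replacing the configuration using the rightmost point $x$ moves the occurrence strictly rightward rather than leaving the statistic unchanged. Making "strictly rightward" precise, so that the process terminates on the finite permutation $\pi$, is the only genuinely technical part; checking that $O \cup \{x\}$ respects the shading of $m$ is routine given the choice of $x$.
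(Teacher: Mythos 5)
Your overall strategy is the same as the paper's: dispose of $\av{p}\subseteq\av{q}$ via Remark~\ref{rem:subsetshading}, then prove the converse by choosing an extremal occurrence of $p$ in $\pi$ and deriving a contradiction from a non-empty region. However, you have left the decisive step unproven, and you say so yourself: you never specify \emph{which} point of $p$ the extremal order is taken over, nor verify that the reconfigured occurrence is \emph{strictly} larger. That step is the entire content of the lemma, and the paper supplies it as follows. The non-trivial occurrence of $p$ in $m$ uses the inserted point $(i+1,j+1)$ in the role of some point $(k,\tau_k)$ of $p$; the order is ``position of the point playing the role of $(k,\tau_k)$'', maximized toward the side of $(k,\tau_k)$ on which the square $\boks{i,j}$ lies (e.g.\ if $k\le i$, maximize the index). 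Strictness is then automatic: in the extremal occurrence $O$, the point $a$ in role $(k,\tau_k)$ lies strictly on one side of the region $\rho$ corresponding to $\boks{i,j}$, while the new occurrence $O'$ (obtained by composing the occurrence of $m$ in $\pi$ with the non-trivial occurrence of $p$ in $m$) puts the point $x\in\rho$ into that role, so $x$ is strictly further in the chosen direction than $a$. Without identifying $(k,\tau_k)$ as the pivot of the order, ``a suitable order rewarding occurrences whose relevant point lies as far to the right as possible'' is not a proof.

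There is also a misattribution in your closing paragraph that would lead you astray when filling the gap. You claim that the direction of the insertion (your choice $m=p^{\pright{i,j}}$) ``is what pins down which order statistic strictly increases.'' It is not. The insertion direction only dictates which point of the non-empty region $\rho$ you must select (the rightmost, so that the two extra shaded squares of $m$ are empty and $O\cup\{x\}$ really is an occurrence of $m$). The order statistic and its direction of strict increase are governed by an independent datum: which point $(k,\tau_k)$ the inserted point represents in the non-trivial occurrence, and whether that point sits to the left or right of (or below or above) the square $\boks{i,j}$ in $\tau$. These two choices must be made separately, and conflating them is exactly the kind of error that would make the ``strict increase'' claim fail in one of the cases you dismiss as symmetric.
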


\begin{proof}
  Take any $m \in p^\pall{i,j}$ such
  that $m$ has a non-trivial occurrence of $p$, which exists by the premises.
  We consider the case where $m = p^\pleft{i,j}$, as the other cases are
  symmetric to the following argument; see Figure~\ref{fig:tsa1pq}.
  In a non-trivial occurrence of $p$ in $m$, the inserted point $(i+1,j+1)$
  corresponds to some point $(k, \tau_k)$ in $p$. The point corresponding to
  $(k,\tau_k)$ in the trivial occurrence is either left or right of $(i+1,j+1)$ in $q$.
  We consider the case where it is to the left of $(i+1,j+1)$, i.e., with a
  lower index, as the other case is analogous.

  \begin{figure}[hb]
      \centering
    \begin{tikzpicture}[baseline=(current bounding box.center), scale=.8]
        \draw[rounded corners=4] (0,0) rectangle (5,5);
        \filldraw (1,1.25) circle (2pt);
        \node[anchor=north west] at (1,1.25) {$(k,\tau_k)$};
        \node[anchor=south west] at (3.5,3.5) {$\boks{i,j}$};
        \draw[gray!80!white] (2.5,0) -- (2.5,5);
        \draw[gray!80!white] (3.5,0) -- (3.5,5);
        \draw[gray!80!white] (0,2.5) -- (5,2.5);
        \draw[gray!80!white] (0,3.5) -- (5,3.5);
        \draw[very thick] (2.5,2.5) rectangle (3.5,3.5);
    \end{tikzpicture}
    \quad\quad\quad
    \begin{tikzpicture}[baseline=(current bounding box.center), scale=.8]
        \draw[rounded corners=4] (0,0) rectangle (5,5);
        \filldraw (1,1.25) circle (2pt);
        \node[anchor=north west] at (1,1.25) {$(k,\tau_k)$};
        \node[anchor=south west] at (3.5,3.5) {$\boks{i,j}$};
        \draw[gray!80!white] (2.5,0) -- (2.5,5);
        \draw[gray!80!white] (3.5,0) -- (3.5,5);
        \draw[gray!80!white] (0,2.5) -- (5,2.5);
        \draw[gray!80!white] (0,3.5) -- (5,3.5);
        \draw[very thick] (2.5,2.5) rectangle (3.5,3.5);
        \fill[pattern color=black!65, pattern=north east lines] (2.5,2.5) rectangle (3,3);
        \fill[pattern color=black!65, pattern=north east lines] (2.5,3) rectangle (3,3.5);
        \filldraw (3,3) circle (2pt);
        \draw[densely dotted,line width=0.6pt] (0,3) -- (5,3);
        \draw[densely dotted,line width=0.6pt] (3,0) -- (3,5);
        \node[anchor=east,xshift=0.1] at (2.58,4) {$(i+1,j+1)$};
        \draw (2.4,4) to[out=0,in=40,distance=20pt] (3,3);
    \end{tikzpicture}
    \caption{On the left is the pattern $p$. On the right is the pattern $m = p^\pleft{i,j}$.\label{fig:tsa1pq}}
  \end{figure}

  Take any permutation $\pi$ that contains an occurrence of $p$, and consider
  the occurrence such that the point $a$ corresponding
  to $(k,\tau_k)$ has the highest possible index in $\pi$, i.e., is as far to
  the right as possible. Assume that the region in $\pi$ that corresponds to the
  square $\boks{i,j}$ in this occurrence of $p$ is non-empty.
  The leftmost point in this region, denoted by $b$,
  along with the occurrence of $p$ in $\pi$, gives us an
  occurrence of $m = p^\pleft{i,j}$ in $\pi$; see Figure~\ref{fig:tsa1pprime}.
  This implies there is an occurrence of $p$ in
  $\pi$ with $b$ corresponding to $(k,\tau_k)$. The point $b$ is further to the right, i.e., has
  a higher index than $a$, which contradicts the choice of an occurrence of $p$
  in $\pi$. Hence, our assumption that the region corresponding to $\boks{i,j}$
  was non-empty must be false. Therefore, the region is empty, and this
  occurrence of $p$ is an occurrence of $q$ as well.

  \begin{figure}[ht]
      \centering
    \begin{tikzpicture}[baseline=(current bounding box.center), scale=0.45]
        \fill[pattern color=black!65, pattern=north east lines] (3,3) rectangle (4,4);
        \fill[pattern color=black!65, pattern=north east lines] (3,4) rectangle (4,5);
        \fill[pattern color=black!65, pattern=north east lines] (3,5) rectangle (4,6);
        \fill[pattern color=black!65, pattern=north east lines] (3,6) rectangle (4,7);
        \draw (3,3) grid (6,7);
        \draw[gray!80!white] (3,-2) -- (3,9);
        \draw[gray!80!white] (-3,3) -- (7.5,3);
        \draw[gray!80!white] (6,-2) -- (6,9);
        \draw[gray!80!white] (-3,7) -- (7.5,7);
        \draw[very thick] (3,3) rectangle (6,7);
        \filldraw (4,6) circle (2pt);
        \node[anchor=south west] at (4,6) {$b$};
        \node[anchor=south] at (4.5,7) {$\boks{i,j}$};
        \draw[rounded corners=4] (0,0) rectangle (6.5,8.2);
        \filldraw (0.7,1.8) circle (2pt);
        \node[anchor=north west] at (0.5, 2.0) {$(k,\tau_k)$};
        \draw (-3,-2) rectangle (7.5,9);
        \draw (0.7,1.8) to[out=89,in=190,distance=40pt] (4,6);
    \end{tikzpicture}
    \caption{The region corresponding to the square $\boks{i,j}$ in a permutation
    $\pi$, containing a point $b$.\label{fig:tsa1pprime}}
  \end{figure}
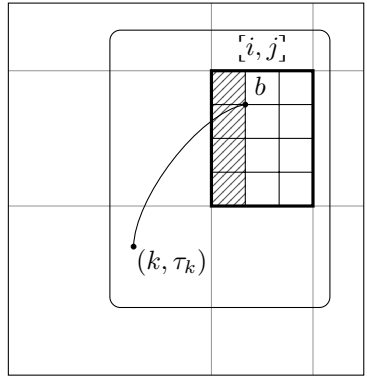
  We have shown that if a permutation contains $p$ then it contains
  $q$. By Remark~\ref{rem:subsetshading} it follows that if a permutation contains
  $q$ then it contains $p$. Therefore $p$ and $q$ are coincident.
\end{proof}

To show that the previous lemma implies any coincidence obtained with the Shading
Lemma we need the following result.

\begin{lemma}\label{lem:tsa1impliessl}
Let $p=(\tau,R)$ be a mesh pattern such that $\tau(i)=j$ and the square
$\boks{i,j} \not\in R$. If conditions
\eqref{lem:shading:req1}--\eqref{lem:shading:req4} in Lemma~\ref{lem:shading} (Shading Lemma)
are satisfied, then some $m \in p^{\pall{i,j}}$ contains a non-trivial
occurrence of $p$.
\end{lemma}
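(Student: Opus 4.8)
The plan is to exhibit, for a suitably chosen direction $a\in\{\uparrow,\downarrow,\leftarrow,\rightarrow\}$, one explicit non-trivial occurrence of $p$ in $m=p^{\boks{i,j}a}$. Writing $p^\point{i,j}=(\tau',T')$, recall that inserting a point into $\boks{i,j}$ leaves the point $(i,\tau(i))=(i,j)$ fixed and places the new point at $(i+1,j+1)$ in $\tau'$, diagonally up and to the right. The candidate occurrence I would use is obtained from the trivial occurrence by letting the inserted point $(i+1,j+1)$ play the role previously played by $(i,j)$, and dropping $(i,j)$. Since no point of $\tau'$ other than $(i,j)$ itself has index $i$ or value $j$, the points $(i,j)$ and $(i+1,j+1)$ sit in exactly the same position relative to every other point of $\tau'$; hence swapping them again yields an occurrence of the classical pattern $\tau$, and it is non-trivial because it uses the inserted point. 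It remains to choose $a$ so that every shaded square of $R$ is matched.

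Next I would record the region-to-cell correspondence for this occurrence. Because the point $(i,j)$ is skipped, the coarse grid cut out by the occurrence differs from that of the trivial occurrence: the occurrence column just left of the inserted point now spans \emph{two} cell-columns of $m$, namely $i-1$ and $i$, and likewise the occurrence row just below spans cell-rows $j-1$ and $j$; all other columns/rows correspond to a single cell-column/row as under the insertion. Emptiness is then immediate: the only point of $\tau'$ not used by the occurrence is the skipped point $(i,j)$, and a short computation places it in the interior of the region corresponding to $\boks{i-1,j-1}$. Condition~\eqref{lem:shading:req1} guarantees $\boks{i-1,j-1}\notin R$, so no square of $R$ demands that this region be empty, and every region that must be empty is.

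For the shading I would split into cases by the position of $\boks{a,b}\in R$ relative to the split column $i$ and split row $j$. Squares with $a\notin\{i-1,i\}$ and $b\notin\{j-1,j\}$ map to a single cell already lying in $T'$, so they need nothing; squares in column $a=i$ or row $b=j$ are covered because the insertion splits $R$'s square into two adjacent cells of $T'$. The genuinely new cells arise for $a=i-1$ and for $b=j-1$, where the region picks up an extra cell-column or cell-row, and these are filled exactly by conditions~\eqref{lem:shading:req4} and~\eqref{lem:shading:req3}, which force the neighbouring square of $R$ supplying the missing shading. Finally, for the four corner squares $\boks{i-1,j-1},\boks{i,j-1},\boks{i-1,j},\boks{i,j}$: two are excluded ($\boks{i,j}\notin R$ by hypothesis and $\boks{i-1,j-1}\notin R$ by condition~\eqref{lem:shading:req1}), and condition~\eqref{lem:shading:req2} leaves at most one of $\boks{i,j-1},\boks{i-1,j}$ shaded. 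A direct check shows that $\boks{i,j-1}\in R$ needs the extra cell $\boks{i+1,j}$, supplied by $m=p^\pdown{i,j}$ (or $p^\pright{i,j}$), while $\boks{i-1,j}\in R$ needs $\boks{i,j+1}$, supplied by $m=p^\pup{i,j}$ (or $p^\pleft{i,j}$); if neither is shaded any of the four choices works. Choosing $m$ by this rule completes the argument.

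The main obstacle, and the step I would be most careful with, is the region-to-cell bookkeeping for the skipped-point occurrence and its interaction with the four augmentations comprising $p^{\pall{i,j}}$. The clean observation that makes it tractable is that adding shading to $m$ can only help the ``fully shaded'' requirement and never affects emptiness, so the only genuine constraints are that conditions~\eqref{lem:shading:req3} and~\eqref{lem:shading:req4} exactly account for the doubled row and column created by dropping $(i,j)$, and that condition~\eqref{lem:shading:req2} leaves at most one corner square uncovered so that a single directional choice suffices. Verifying this matching without getting lost in the index shifts is the crux.
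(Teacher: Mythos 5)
Your proposal is correct and follows essentially the same route as the paper: construct the non-trivial occurrence by letting the inserted point play the role of $(i,j)$, place the skipped point in the region for $\boks{i-1,j-1}$ (harmless by condition~\eqref{lem:shading:req1}), use conditions~\eqref{lem:shading:req3} and~\eqref{lem:shading:req4} for the doubled row and column, and pick the insertion direction according to which of $\boks{i,j-1}$, $\boks{i-1,j}$ is shaded, with condition~\eqref{lem:shading:req2} ensuring a single direction suffices. The paper only sketches the case $\boks{i,j-1}\in R$ with $m=p^{\pdown{i,j}}$ and declares the rest similar, so your write-up is simply a more complete account of the same argument.
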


\begin{proof}
  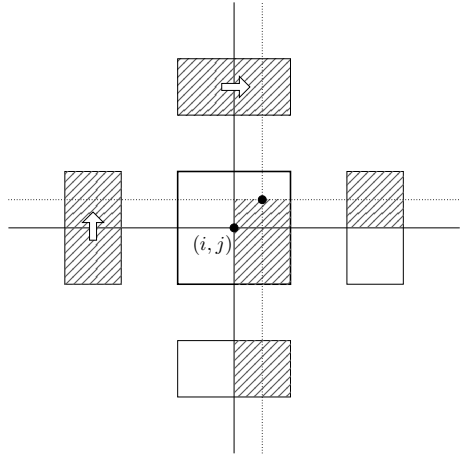
\begin{figure}[hb]
      \centering
    \scalebox{0.75}{%
      \begin{tikzpicture}[baseline=(current bounding box.center)]
          \draw (0,-4) -- (0,4);
          \draw (-4,0) -- (4,0);
          \draw[thick] (-1,-1) rectangle (1,1);
          \fill[pattern color=black!65, pattern=north east lines] (0,-1) rectangle (1,0.5);

          \draw[densely dotted] (0.5,-4) -- (0.5,4);
          \draw[densely dotted] (-4,0.5) -- (4,0.5);

          \node[anchor=north east] at (0.1,0) {$(i,j)$};

          \draw (-3,-1) rectangle (-2,1);
          \fill[pattern color=black!65, pattern=north east lines] (-3,-1) rectangle (-2,1);
          \draw (2,-1) rectangle (3,1);
          \fill[pattern color=black!65, pattern=north east lines] (2,0) rectangle (3,1);
          \node at (-2.5,0) [single arrow,shape border rotate=90,draw=black,fill=white, minimum height=10mm, scale=0.5] {};

          \draw (-1,-3) rectangle (1,-2);
          \fill[pattern color=black!65, pattern=north east lines] (-1,2) rectangle (1,3);
          \draw (-1,2) rectangle (1,3);
          \fill[pattern color=black!65, pattern=north east lines] (0,-3) rectangle (1,-2);
          \node at (0,2.5) [single arrow,shape border rotate=0,draw=black,fill=white, minimum height=10mm, scale=0.5] {};
		  \filldraw (0,0) circle (2pt);
          \filldraw (0.5,0.5) circle (2pt);

      \end{tikzpicture}}
    \caption{Obtaining a new occurrence of $p$.}
    \label{fig:shadingtsaproof}
  \end{figure}

  We will only consider the case where $\boks{i,j-1}$ is shaded, as the other
  cases are similar. Let $m = p^{\pdown{i,j}}$, as depicted in
  Figure~\ref{fig:shadingtsaproof}. By swapping out the point $(i,j)$ in the
  original occurrence of $p$ in $m$ with the inserted point, we get a non-trivial occurrence of the
  classical pattern $\tau$ in $m$. The conditions \eqref{lem:shading:req3},
  \eqref{lem:shading:req4} and the choice of $m = p^{\pdown{i,j}}$ guarantee that
  all the squares in $R$ are still shaded in this new occurrence, making it a non-trivial
  occurrence of $p$ in $m$.
\end{proof}

The previous lemma implies that the Shading Lemma (Lemma~\ref{lem:shading})
is a consequence of Lemma~\ref{lem:tsa1}. It is then natural to ask if these two
lemmas are equivalent, in the sense that they can identify exactly the same
coincidences of mesh patterns. In Example~\ref{ex:tsa1stronger} we show that
Lemma~\ref{lem:tsa1} is strictly stronger than the Shading Lemma.

\begin{example}
    \label{ex:tsa1stronger}
    Consider the mesh patterns
    \[
        p  = \shpatt{\pattdispscale}{3}{1,2,3}[0/1,1/2][][][] \quad\quad
        q = \shpatt{\pattdispscale}{3}{1,2,3}[0/0,0/1,1/2][][][]
    \]
    Then $p^{\pup{0,0}}$ is the mesh pattern
    \[
        \shpatt{\pattdispscale}{4}{1,2,3,4}[0/1,0/2,1/1,1/2,2/3][][][] \quad
    \]
    which contains a non-trivial occurrence of $p$, in the subsequence $123$. By
    Lemma~\ref{lem:tsa1} this implies that $p$ and $q$ are coincident.
    However, the conditions of Lemma~\ref{lem:shading} are not satisfied for
    the square $\boks{0,0}$ and, hence, the coincidence of these patterns does
    not follow from that lemma.
\end{example}

A previous strengthening of the Shading Lemma was given by
Claesson, Tenner, and Ulfarsson~\cite[Lemma~7.6]{simshadinglemma}:
\begin{lemma}[Simultaneous Shading Lemma]\label{lem:simshading} Let
  $p=(\tau,R)$ be a mesh pattern. Fix a subsequence $G$ of $\tau$ and, for each $g\in G$, let $U_g$
  be a square or a pair of adjacent squares that are shadeable\footnote{As defined in Tenner, Claesson, and Ulfarsson~\cite{simshadinglemma}, a square is \emph{shadeable}
  if it satisfies the conditions of Lemma~\ref{lem:shading} or any of its symmetries,
  and a pair of adjacent squares are \emph{shadeable} if they satisfy the conditions
  of Corollary 7.1 in \cite{simshadinglemma} or any of its symmetries.
  The conditions of Corollary 7.1 are those required so that the two squares can
  be shaded by two applications of Lemma~\ref{lem:shading}.}
  from $g$.  Then $p$ is coincident with
  $(\tau,R \cup S)$, where $S = \bigcup_{g \in G} U_g$.
\end{lemma}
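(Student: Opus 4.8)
The plan is to sidestep the naive strategy of shading the squares of $S$ one at a time. That approach is doomed because the conditions of the Shading Lemma (Lemma~\ref{lem:shading}) are not monotone in the mesh $R$: shading one square alters the ambient pattern and can destroy the shadeability of another. Instead I would keep the mesh $R$ fixed throughout and, for an arbitrary permutation containing $p$, manufacture a single occurrence that respects all of $R \cup S$ simultaneously. Since $R \subseteq R \cup S$, Remark~\ref{rem:subsetshading} already gives $\av{p} \subseteq \av{(\tau, R\cup S)}$, so it suffices to prove the reverse inclusion $\co{p} \subseteq \co{(\tau, R\cup S)}$, i.e.\ that every $\pi$ containing $p$ also contains $(\tau, R\cup S)$.

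So fix $\pi \in \co{p}$ and consider the finite, nonempty set of occurrences of $p$ in $\pi$. For each $g\in G$, the hypothesis that $U_g$ is shadeable from $g$ supplies, through Lemma~\ref{lem:tsa1impliessl} together with the force argument in the proof of Lemma~\ref{lem:tsa1}, a direction $d_g\in\{\uparrow,\downarrow,\leftarrow,\rightarrow\}$ with the following property: in any occurrence of $p$ whose region corresponding to $U_g$ is nonempty, one may replace the single point playing the role of $g$ by a suitable point of that region so as to obtain another occurrence of $p$ in which the point of $g$ has strictly advanced in direction $d_g$ and every other point of the occurrence is left unchanged. The decisive observation is that, because $R$ is never enlarged anywhere in this argument, all of these shadeability certificates are read off against the one fixed mesh $R$, and therefore hold at the same time rather than interfering with one another.

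Next I would impose an arbitrary linear priority $g_1, g_2, \ldots, g_m$ on $G$ and let $O$ be the occurrence of $p$ in $\pi$ that is lexicographically extremal with respect to this priority: first pushing the point of $g_1$ as far as possible in direction $d_{g_1}$, then, subject to that, the point of $g_2$ in direction $d_{g_2}$, and so on. I claim every region $U_{g_i}$ is empty in $O$. If some $U_{g_i}$ were nonempty, the force property yields an occurrence $O'$ of $p$ obtained from $O$ by moving only the point of $g_i$ strictly forward in direction $d_{g_i}$; the advancing point is a fresh point of $\pi$ lying in $U_{g_i}$, distinct from every point of the occurrence, so the points of $g_1,\ldots,g_{i-1}$ are untouched. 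Thus $O'$ agrees with $O$ on the first $i-1$ coordinates and strictly beats it on the $i$-th, contradicting the lexicographic maximality of $O$. Hence every $U_g$, and of course every square of $R$, is empty in $O$, so $O$ is an occurrence of $(\tau, R\cup S)$ in $\pi$, which completes the argument.

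The main obstacle is the pair-of-squares case, where $U_g$ is a pair of adjacent squares certified by Corollary~7.1 of \cite{simshadinglemma} rather than by a single application of the Shading Lemma. Here I would verify that the two constituent applications both act on the same point $g$ and combine into one force: the extremal point of the union of the two squares, when installed as the new $g$, empties both squares at once while keeping $R$ shaded and advancing $g$ in a single common direction. Once this is checked, the pair contributes exactly one coordinate $d_g$ to the lexicographic objective and the argument above applies verbatim; the only remaining routine check is that the advancing point is genuinely disjoint from the rest of the occurrence, so that no two roles ever collide.
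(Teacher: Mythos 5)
Your argument is essentially the paper's: the paper does not prove this lemma from scratch (it is cited from Claesson--Tenner--Ulfarsson) but derives it from Lemma~\ref{lem:tsa2} by building the force $F=((g_1,a_1),\dots,(g_k,a_k))$ whose directions point from each $g$ toward $U_g$, and the proof of Lemma~\ref{lem:tsa2} is exactly your lexicographically-strongest-occurrence contradiction. The one place where you diverge is your insistence that the mesh $R$ is never enlarged and that every shadeability certificate is ``read off against the one fixed mesh $R$'': for a shadeable \emph{pair} this is not quite accurate, since by the footnote the second square of the pair is certified by an application of the Shading Lemma to the pattern with the first square already shaded. The paper's Lemma~\ref{lem:tsa2} builds in precisely this progressive enlargement via $p_i=(\tau,R\cup\{s_1,\dots,s_{i-1}\})^{s_i\star}$, which is the cleaner way to license the second square; alternatively, your observation that at the extremal occurrence the first square of the pair is already known to be empty (so the occurrence is in fact one of $(\tau,R\cup\{s\})$) closes the same loophole. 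You flag the pair case and sketch a workable patch (the extremal point of the union of the two squares), so this is a matter of exposition rather than a gap, but as written the ``all certificates hold against fixed $R$'' claim is slightly overstated.
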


An example of a coincidence following directly from the Simultaneous Shading
Lemma is the following:
\begin{example}\label{ex:ssl_stronger_than_tsa1}
\[
  \shpatt{\pattdispscale}{2}{1,2}[][][][] \quad \asymp \quad
  \shpatt{\pattdispscale}{2}{1,2}[0/0,1/0,2/1,2/2][][][]
\]
In the pattern on the left, the square pair $\{ \boks{0,0}, \boks{1,0} \}$
is shadeable from the point $1$ and the square pair $\{ \boks{2,1}, \boks{2,2} \}$
is shadeable from the point $2$. Thus the coincidence of the patterns follows from
the Simultaneous Shading Lemma.
This, however, does not follow from Lemma~\ref{lem:tsa1}.
\end{example}

To give a common
strengthening of Lemma~\ref{lem:tsa1} and the Simultaneous Shading Lemma we need some definitions.
In Definition~\ref{def:compare} we compared occurrences of mesh patterns using a
point and a direction. We generalize this notion to include multiple points.

\begin{definition}\label{def:force}
	Given a mesh pattern $p = (\tau,R)$, with $\tau \in \symm_k$, we
define a \emph{force} on it as a tuple of pairs $F = ((\tau_{i_1}, a_1), (\tau_{i_2}, a_2),
\dots, (\tau_{i_\ell}, a_\ell))$ where $\ell \in [0, k]$, the
indices $i_j \in [1, k]$ are distinct, and $a_j \in \{\uparrow, \downarrow, \leftarrow,
\rightarrow\}$ represents the direction we are \emph{forcing} the point
$\tau_{i_j}$ in. The \emph{size} of the force $F$ is $\ell$ and denoted $|F|$.

Let $p$ be a mesh pattern with force $F$. If we have an occurrence $c =
c_1 c_2 \cdots c_k$ of $p$ in a permutation $\pi$, then for each $(\tau_{i_j},
a_j)$ we define the \emph{strength} of the point $c_{i_j}$ with respect to
the force $F$ as
\[
    \strength_F(\pi,c,i_j) = \left\{\begin{array}{ll}
                           \phantom{-}c_{i_j} & \textrm{if } a_j = \uparrow \\
                           -c_{i_j} & \textrm{if } a_j = \downarrow \\
                           -\pi^{-1}(c_{i_j}) & \textrm{if } a_j = \leftarrow \\
                           \phantom{-}\pi^{-1}(c_{i_j}) & \textrm{if } a_j = \rightarrow
                           \end{array}\right.
\]
Finally, we define the \emph{strength} of an occurrence $c$ of $p$ in a
permutation $\pi$ with respect to the force $F$ as the tuple
\[
    \strength_F(\pi, c) = (\strength_F(\pi, c, i_1), \strength_F(\pi, c, i_2), \dots, \strength_F(\pi, c, i_\ell)).
\]
An occurrence $c$ in $\pi$ is \emph{stronger} than another occurrence $c'$ in $\pi$
(\emph{with respect to $F$}) if $\strength_F(\pi, c) > \strength_F(\pi, c')$, in
the lexicographical order, otherwise it is \emph{weaker}.
\end{definition}

\begin{example}
  Consider the pattern $\tau = (1342, \emptyset)$ with force
	$F = ((2, \uparrow), (3, \downarrow))$. In the permutation $2147563$
	the subsequence $2463$ is an occurrence of $\tau$ with strength $(3,-4)$,
	while the subsequence $1563$ has strength $(3,-5)$. The first occurrence
	is stronger with respect to this force.
\end{example}


\begin{lemma}
  \label{lem:tsa2}
    Let $p = (\tau,R)$ be a mesh pattern with force $F$, and assume
    $S = \{s_1,s_2,\dots, s_k\}$ where $S \cap R = \emptyset$. If all the sets
    $p_1 = (\tau,R)^{s_1\star}, p_2 = (\tau,R\cup\{s_1\})^{s_2\star},\dots,
    p_k = (\tau,R\cup\{s_1,s_2,\dots,s_{k-1}\})^{s_k\star}$ contain an
    occurrence of $p$ that is stronger than the trivial occurrence of $p$ with respect to $F$,
    then $p$ and $q = (\tau, R \cup S)$ are coincident.
\end{lemma}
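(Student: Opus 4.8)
The plan is to prove the two nontrivial inclusions of avoidance sets separately, reducing to a single direction exactly as in Lemma~\ref{lem:tsa1}. Since $R \subseteq R\cup S$, Remark~\ref{rem:subsetshading} immediately gives $\av{p} \subseteq \av{q}$ (equivalently, any permutation containing $q$ contains $p$), so it remains to show that every permutation $\pi$ containing $p$ also contains $q$. To this end I would fix such a $\pi$ and, using the force $F$, select among the finitely many occurrences of $p$ in $\pi$ one occurrence $c^*$ whose strength $\strength_F(\pi,c^*)$ is lexicographically maximal. The goal then becomes showing that every square of $S$ is empty in $c^*$; since $c^*$ already witnesses emptiness of all squares in $R$, this makes $c^*$ an occurrence of $q=(\tau,R\cup S)$, so $\pi$ contains $q$. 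This generalizes Lemma~\ref{lem:tsa1}, replacing its implicit single rightward force by the explicit $F$ and iterating over the squares of $S$.

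The main argument is by contradiction, exploiting the incremental structure of the hypotheses. Suppose some square of $S$ is nonempty in $c^*$, and let $i$ be the smallest index for which $s_i$ is nonempty. Then $s_1,\dots,s_{i-1}$ are all empty in $c^*$, so $c^*$ is an occurrence of $(\tau,R\cup\{s_1,\dots,s_{i-1}\})$, the underlying pattern of $p_i$. Because the region of $\pi$ corresponding to $s_i$ contains a point, the extreme-point construction behind Remark~\ref{rem:addpoint} lets me pick the appropriate extreme point $b$ in that region (leftmost, rightmost, highest, or lowest, according to which of the four members of $p_i=(\tau,R\cup\{s_1,\dots,s_{i-1}\})^{s_i\star}$ I target) so that $c^*$ together with $b$ forms an occurrence of that member $m_i\in p_i$ in $\pi$. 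I take $m_i$ to be precisely the pattern in $p_i$ which, by hypothesis, carries an occurrence $d_i$ of $p$ stronger than the trivial one.

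Next I would transport $d_i$ back into $\pi$. By Remark~\ref{rem:meshinmeshinperm}, composing the occurrence of $m_i$ in $\pi$ with the occurrence $d_i$ of $p$ in $m_i$ yields an occurrence $c'$ of $p$ in $\pi$; the trivial occurrence of $p$ in $m_i$ transports to $c^*$, while $d_i$, being nontrivial (hence using the inserted point of $m_i$), transports to an occurrence $c'$ employing $b$. The crux — and the step I expect to require the most care — is verifying that $c'$ is stronger than $c^*$ with respect to $F$. This holds because the embedding of $m_i$ into $\pi$ via $c^*\cup\{b\}$ is order-isomorphic in both coordinates: for each forced index $i_j$, the relative order (in value when $a_j\in\{\uparrow,\downarrow\}$, in position when $a_j\in\{\leftarrow,\rightarrow\}$) between the $i_j$-th points of $d_i$ and of the trivial occurrence inside $m_i$ agrees with the relative order between $c'_{i_j}$ and $c^*_{i_j}$ in $\pi$. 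Hence the componentwise, and therefore lexicographic, comparison of strengths is preserved, giving $\strength_F(\pi,c') > \strength_F(\pi,c^*)$.

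This contradicts the maximality of $c^*$, so no square of $S$ is nonempty in $c^*$; thus $c^*$ is an occurrence of $q$ and $\pi$ contains $q$. Combined with the easy inclusion from Remark~\ref{rem:subsetshading}, this yields $\av{p}=\av{q}$, i.e.\ $p\asymp q$. The only genuinely delicate point is the strength-transport claim, where one must confirm that the order-isomorphism underlying the containment of $m_i$ in $\pi$ matches forced indices correctly and respects both the value-based and position-based comparisons in the definition of $\strength_F$.
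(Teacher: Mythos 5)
Your proof is correct and takes essentially the same route as the paper's: fix an occurrence of $p$ in $\pi$ of maximal strength with respect to $F$, and derive a contradiction from a nonempty square of $S$ by transporting the hypothesized stronger occurrence of $p$ from the relevant member of $p_i$ back into $\pi$. You in fact make explicit two steps the paper leaves implicit --- choosing the smallest nonempty index $i$ so that $c^*$ genuinely is an occurrence of the underlying pattern of $p_i$, and verifying that the lexicographic strength comparison is preserved under the order-isomorphic embedding of $m_i$ into $\pi$.
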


\begin{proof}
    Let $\pi$ be a permutation and let $c$ be an occurrence of $p$ in $\pi$
    which has maximal strength with respect to the force $F$. Let $i \in \{1,2,
    \dots, k\}$ and let $c'$ be a non-trivial occurrence of $p$
    in some mesh pattern in $p_{i}$ that
    is stronger than the trivial occurrence. The occurrence $c'$ gives rise
    to an occurrence of $p$ in $\pi$ which is stronger than $c$, which is a
    contradiction. Hence, in the original occurrence $c$, the region corresponding
    to the square $s_i$ is empty. Letting $i$ range from $1$ to $k$ shows that the
    regions in $\pi$ corresponding to all the squares in $S$ are empty, i.e., $c$
    is an occurrence of $q$.
  \end{proof}

In the previous lemma, the special case where $|F| = 1$ and $k = 1$ is equivalent
to Lemma~\ref{lem:tsa1}. To show that Lemma~\ref{lem:tsa2} can prove any coincidence
proven by the Simultaneous Shading Lemma we need the following result.

\begin{lemma}
    Let $p=(\tau,R)$ be a mesh pattern. Fix a subsequence $G$ of $\tau$ and, for each $g\in G$, let
    $U_g$ be a square or pair of adjacent squares that are shadeable from $g$.
    Then there exists a force $F$ such that $S = \bigcup_{g\in G} U_g$ satisfies the
    conditions of Lemma~\ref{lem:tsa2}.
\end{lemma}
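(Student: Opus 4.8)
The plan is to extract a single force $F$ from the data of the Simultaneous Shading Lemma and then to order the squares of $S$ so that they can be shaded one at a time exactly as Lemma~\ref{lem:tsa2} requires. For each $g \in G$, Lemma~\ref{lem:tsa1impliessl} (applied to the symmetry of the Shading Lemma witnessing the shadeability of $U_g$) produces a non-trivial occurrence of $(\tau,R)$ obtained by swapping the point $g$ for an inserted point lying strictly farther than $g$ in some direction $a_g \in \{\uparrow,\downarrow,\leftarrow,\rightarrow\}$; when $U_g$ is a pair of adjacent squares, both squares lie on the same side of $g$ and share such a direction. I would set $F = ((g, a_g))_{g\in G}$, listing the points of $G$ in any fixed order.

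I would then order $S = \{s_1,\dots,s_k\}$ by grouping the squares according to the point they are shaded from, taking the groups in the order the points occur in $F$, and, inside a pair $U_g = \{s,s'\}$, listing first the square shadeable from $g$ in $(\tau,R)$ and second the square that becomes shadeable only after the first is shaded (the order supplied by Corollary~7.1 of~\cite{simshadinglemma}). A square that is shadeable from two points is assigned to a single group, which is harmless.

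The core of the argument is to verify the hypotheses of Lemma~\ref{lem:tsa2}: each $p_i = (\tau, R \cup \{s_1,\dots,s_{i-1}\})^{s_i\star}$ must contain an occurrence of $p = (\tau,R)$ that is stronger than the trivial one. The key observation is that an occurrence of $(\tau,R)$ only requires the squares of $R$ to map to shaded, point-free regions, so the extra shaded squares $s_1,\dots,s_{i-1}$ appearing in $p_i$ are irrelevant to the existence of such an occurrence: since the members of $p_i$ and of $(\tau,R)^{s_i\star}$ have identical point sets, any non-trivial occurrence of $(\tau,R)$ guaranteed by Lemma~\ref{lem:tsa1impliessl} in the latter is automatically one in the former. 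Thus for a single square I invoke Lemma~\ref{lem:tsa1impliessl} directly on $(\tau,R)$, and for the second square of a pair I invoke it on $(\tau, R\cup\{s\})$, whose non-trivial occurrence is in particular an occurrence of $(\tau,R)$. In each case the occurrence differs from the trivial one by the single swapped point $g$, so exactly one coordinate of the strength tuple changes, and by the choice of $a_g$ that coordinate strictly increases; hence the occurrence is lexicographically stronger. Lemma~\ref{lem:tsa2} then gives $p \asymp (\tau, R\cup S) = q$.

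I expect the main obstacle to be the pair case. There one must confirm that the two squares of a Corollary~7.1 pair really do share the single direction $a_g$, and that after the first square is shaded the second still yields, via Lemma~\ref{lem:tsa1impliessl} applied to $(\tau, R\cup\{s\})$, an occurrence that is an occurrence of $(\tau,R)$ and is strictly stronger with respect to $F$. Once these points and the observation that additional shaded squares never obstruct an occurrence of $(\tau,R)$ are in place, the remainder is a direct bookkeeping reduction to Lemma~\ref{lem:tsa1impliessl} and the single-coordinate nature of each strength change.
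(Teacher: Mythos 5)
Your proof is correct and follows essentially the same route as the paper's: define $F$ by assigning each $g \in G$ the direction in which $U_g$ lies relative to $g$, and then reduce the hypotheses of Lemma~\ref{lem:tsa2} to Lemma~\ref{lem:tsa1impliessl}, using the observation that the accumulated shadings $s_1,\dots,s_{i-1}$ cannot destroy an occurrence of $(\tau,R)$ since the point sets are unchanged and extra shading in the containing pattern only helps. The paper's own proof is much terser---it simply asserts that shadeability of $U_g$ from $g$ yields the required stronger occurrence in $(\tau,R)^{s_i\star}$---so your explicit treatment of the ordering of $S$, the two-step pair case, and the single-coordinate lexicographic comparison supplies exactly the details the paper leaves implicit.
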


\begin{proof}
   Let $k = \lvert G \rvert $. We define the force $F = ((g_1,a_1), (g_2,a_2),
   \dots, (g_k,a_k))$ as follows:\footnote{Note that in some cases $U_g$
   satisfies many of the cases, in which case we can make an arbitrary choice.}
   \[
   a_i = \left\{\begin{array}{ll}
       \uparrow & \textrm{if the square(s) $U_{g_i}$ are north of $g_i$}  \\
       \downarrow & \textrm{if the square(s) $U_{g_i}$ are south of $g_i$} \\
       \leftarrow & \textrm{if the square(s) $U_{g_i}$ are west of $g_i$} \\
       \rightarrow & \textrm{if the square(s) $U_{g_i}$ are east of $g_i$}
   \end{array}\right.
   \]
   It suffices to show that for each $s_i \in S$, some mesh pattern in $(\tau,R)^{s_i\star}$
   contains a non-trivial occurrence of $p$ that is stronger than the trivial
   occurrence of $p$. Let $U_g$ be the square (or a pair of squares) corresponding to
   $s_i$.  Since $U_g$ is shadeable from $g$ there is a mesh pattern in $(\tau,R)^{s_i\star}$ that
   contains a non-trivial occurrence of $p$ that is stronger than the trivial
   occurrence of $p$.
\end{proof}

The previous lemma implies that the Simultaneous Shading Lemma
(Lemma~\ref{lem:simshading}) is a consequence of Lemma~\ref{lem:tsa2}.
Example~\ref{ex:tsa2_stronger_than_ssl} shows that Lemma~\ref{lem:tsa2} is
strictly stronger.

\begin{example}\label{ex:tsa2_stronger_than_ssl}
    Consider the two mesh patterns
    \[
      p = \shpatt{\pattdispscale}{3}{1,2,3}[0/1,0/2,0/3, 1/1,1/3, 2/2][][][] \quad\quad
      q = \shpatt{\pattdispscale}{3}{1,2,3}[0/0,0/1,0/2,0/3, 1/1,1/3, 2/2][][][]
    \]
    Then $p^\pup{0,0}$ is the mesh pattern
    \[
        \shpatt{\pattdispscale}{4}{1,2,3,4}[0/1,0/2,0/3,0/4,1/1,1/2,1/3,1/4,2/2,2/4,3/3][][][] \quad
    \]
    which contains $p$ in the subsequence $123$. This is a stronger occurrence of $p$ than
    the trivial occurrence with respect to the force $F = ((1,\downarrow))$. By
    Lemma~\ref{lem:tsa2}, $p$ and $q$ are coincident, but $p$ does not satisfy
    the conditions of Simultaneous Shading Lemma for shading $\boks{0,0}$.
\end{example}

We introduce one final strengthening of our lemmas, before presenting an
algorithm that recursively applies them. Up to now all of the lemmas
could be used to prove that two patterns are coincident, i.e., $\av{q} = \av{p}$.
The next lemma, like Remark~\ref{rem:subsetshading}, can be used to
prove results of the form $\av{q} \subseteq \av{p}$.

\begin{lemma}
  \label{lem:tsa3}
  Let $p = (\tau,R)$ be a mesh pattern with force $F$, and $q = (\tau,R')$ be
  another mesh pattern. Let $S = \{s_1,s_2,\dots, s_k\}$ where $S = R'
  \setminus R$.  If all the sets $p_1 = (\tau,R)^{s_1\star}, p_2 =
  (\tau,R\cup\{s_1\})^{s_2\star},\dots, p_k =
  (\tau,R\cup\{s_1,s_2,\dots,s_{k-1}\})^{s_k\star}$ contain an
  occurrence of $p$ that is stronger than the trivial occurrence of $p$, or an
  occurrence of a pattern that implies an occurrence of $q$,\footnote{For instance,
  a previous application of the lemma might have shown that a pattern $p_i$ contains
  a pattern with shadings that form a superset of the shadings of $q$.} then containment
  of $p$ implies containment of $q$.
\end{lemma}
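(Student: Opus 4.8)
The plan is to mimic the proof of Lemma~\ref{lem:tsa2}, but replace the ``strictly stronger occurrence forces emptiness'' argument with a case analysis that also accommodates the weaker hypothesis allowing an occurrence of a pattern implying an occurrence of $q$. First I would take a permutation $\pi$ that contains $p$, and among all occurrences of $p$ in $\pi$ select one, call it $c$, of maximal strength with respect to the force $F$. The goal is to show $c$ is (or gives rise to) an occurrence of $q = (\tau, R')$; since $R' \setminus R = S$, this amounts to showing that in the occurrence $c$ the regions corresponding to $s_1, s_2, \dots, s_k$ are all empty.

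Next I would process the squares $s_1, \dots, s_k$ in order, exactly as in Lemma~\ref{lem:tsa2}. For a fixed $i$, suppose for contradiction that the region of $\pi$ corresponding to $s_i$ in the occurrence $c$ is non-empty. By Remark~\ref{rem:addpoint}, placing a point in that region yields an occurrence of some mesh pattern in $(\tau, R \cup \{s_1,\dots,s_{i-1}\})^{s_i \star} = p_i$ inside $\pi$ (using that the squares $s_1,\dots,s_{i-1}$ have already been shown empty in $c$, so they may be added to the shading). By hypothesis, $p_i$ contains either an occurrence of $p$ stronger than the trivial one, or an occurrence of a pattern implying an occurrence of $q$. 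In the first case, that non-trivial occurrence lifts to an occurrence of $p$ in $\pi$ strictly stronger than $c$, contradicting maximality of $c$; hence the region is empty. In the second case, Remark~\ref{rem:meshinmeshinperm} (together with the footnote's superset-of-shadings observation) directly produces an occurrence of $q$ in $\pi$, and we are finished immediately. Ranging $i$ from $1$ to $k$, either we terminate early with an occurrence of $q$, or every $s_i$-region is empty in $c$, so $c$ itself is an occurrence of $q$. In all cases $\pi$ contains $q$, which is exactly ``containment of $p$ implies containment of $q$.''

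The subtle point, and the main thing to get right, is the bookkeeping in the inductive emptiness argument: when I invoke the hypothesis on $p_i = (\tau, R \cup \{s_1,\dots,s_{i-1}\})^{s_i\star}$, I must already know that the regions for $s_1,\dots,s_{i-1}$ are empty in the chosen occurrence $c$, so that the extra shadings $\{s_1,\dots,s_{i-1}\}$ are genuinely respected by $c$ and the added point in $s_i$ produces a legitimate occurrence of a pattern in $p_i$ rather than of $p$ with the old shading only. The strict inequality in the lexicographic order on strengths is what yields the contradiction in the ``stronger occurrence'' branch, so I would take care to note that a stronger occurrence of $p$ in $p_i$ transfers to a stronger occurrence in $\pi$ via the identification of points, just as in the proof of Lemma~\ref{lem:tsa2}. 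The only genuinely new ingredient relative to Lemma~\ref{lem:tsa2} is the disjunctive hypothesis: because we now only need $\av{q} \subseteq \av{p}$ rather than full coincidence, the ``implies an occurrence of $q$'' branch lets us conclude containment of $q$ outright without ever needing the reverse inclusion, so there is no appeal to Remark~\ref{rem:subsetshading} at the end.
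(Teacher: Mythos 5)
Your proposal is correct and follows essentially the same route as the paper: the paper's proof is simply the one-line observation that the argument of Lemma~\ref{lem:tsa2} (maximal-strength occurrence, process $s_1,\dots,s_k$ in order, contradiction from a stronger occurrence) goes through verbatim, with the single modification that if some $p_i$ only yields a pattern implying an occurrence of $q$, one concludes containment of $q$ directly at that point. Your fleshed-out version, including the bookkeeping remark that $s_1,\dots,s_{i-1}$ must already be known empty before invoking the hypothesis on $p_i$, matches the intended argument.
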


\begin{proof}
  The proof is analogous to the proof of Lemma~\ref{lem:tsa2}, with one exception.
  If a $p_i$ is ever reached such that containment of $p_i$ implies the occurrence of
  $q$, then $p$ implies the occurrence of $p_i$, which in turn implies the occurrence of $q$.
\end{proof}

Note that we can prove coincidence of two patterns $p$ and $q$ by applying the
above lemma twice: Once to prove that containment of $p$ implies containment of
$q$, and then again in the other direction to prove that containment of $q$ implies
the containment of $p$.

We have already shown that the lemmas are ordered by implication as in
Figure~\ref{fig:overview}.
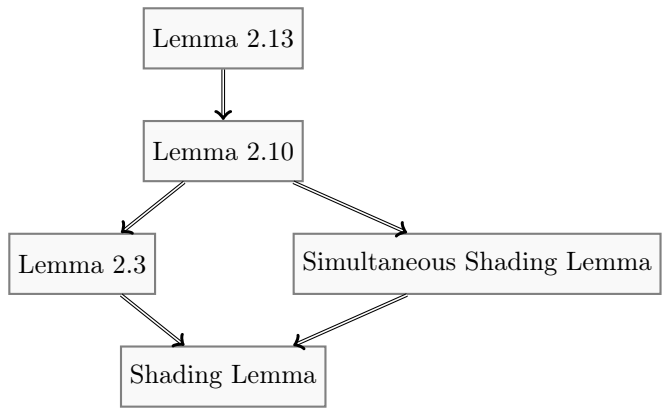
\begin{figure}[ht]
 \begin{center}
  \begin{tikzpicture}[scale = 0.75, place/.style = {rectangle,draw = black!50,fill = gray!5,thick, minimum size=0.8cm}, auto]

   \node [place] (sl) {Shading Lemma};
   \node [place] (tsa1) at ($(sl) + (-2.5, 2)$){Lemma~\ref{lem:tsa1}};
   \node [place] (ssl) at ($(tsa1) + (+7, 0)$){Simultaneous Shading Lemma};
   \node [place] (tsa2) at ($(sl) + (0, 4)$){Lemma~\ref{lem:tsa2}};
   \node [place] (tsa3) at ($(sl) + (0, 6)$){Lemma~\ref{lem:tsa3}};

   \draw [double,->] (tsa3) to (tsa2);
   \draw [double,->] (tsa2) to (ssl);
   \draw [double,->] (tsa2) to (tsa1);
   \draw [double,->] (ssl) to (sl);
   \draw [double,->] (tsa1) to (sl);
  \end{tikzpicture}
 \end{center}
 \caption{Comparison of the lemmas.}
 \label{fig:overview}
\end{figure}
To get a better idea of the power of these results, we compare them across all
mesh patterns with underlying classical patterns $1$, $12$, $123$ and
$132$.\footnote{The remaining patterns of size $2$ and $3$ are symmetries of
these patterns.} This is a total of $131.600$ mesh patterns.
Before we apply any of the lemmas we compute $\avn{p}{k}$ for $k \leq 10$. This
allows us to perform an experimental coincidence classification of these
patterns: two patterns $p$, $q$ are in the same \emph{experimental class} if
$\avn{p}{k} = \avn{q}{k}$ for $k \leq 10$. For any two patterns $p$ and $q$
in different experimental classes, the experimental classification finds a
permutation that shows that $p$ and $q$ are not coincident.  The number of
experimental classes for each underlying pattern is given in
Table~\ref{tab:coinc_classes}.
Some of these classes contain a single mesh pattern, and we call these
\emph{resolved} (res.)\ classes, since a pattern in such a class is not coincident to
any other pattern. An example of such a class is the class which contains a fully
shaded mesh pattern. Such a pattern
is avoided by every permutation but the underlying pattern itself. Therefore it is
coincident only to itself and the class is a singleton.
The remaining classes are said to be \emph{unresolved} (unr.). An unresolved class
becomes resolved when we have shown that all the patterns in the class are
coincident and therefore that the experimental class is in fact a coincidence class.

To resolve an unresolved class we start by creating a directed graph with a
vertex for each pattern in the class.  The graph is completely disconnected at first.
%
If two patterns $p$ and $q$ are shown to be coincident by the Shading Lemma,
the Simultaneous Shading Lemma, Lemma~\ref{lem:tsa1} or Lemma~\ref{lem:tsa2},
we add edges between the patterns in both directions.
If Remark~\ref{rem:subsetshading} or Lemma~\ref{lem:tsa3} shows that the
containment of $p$ implies containment of $q$ then we add a directed edge from $p$ to
$q$. If the graph becomes strongly connected (i.e., is one strong component)
then the class is resolved, as we have proven the coincidence
of all the patterns in the class.\footnote{The computations were performed on resources
provided by the Icelandic High Performance Computing Centre at the University of Iceland.}

\begin{table}[ht]
  \centering
  \begin{tabular}{l|r|r|r|r|r|r|r|r}
    Pattern & \multicolumn{2}{c|}{$1$} & \multicolumn{2}{c|}{$12$} & \multicolumn{2}{c|}{$123$} & \multicolumn{2}{c}{$132$} \\ \hline \hline
            & unr. & res. & unr. & res. & unr. & res. & unr. & res.\\ \hline
    Experimental              & $1$ & $7$ & $59$  & $161$ & $9608$  & $23908$ & $10315$ & $23035$ \\
    Shading Lemma             & $0$ & $8$ & $2$   & $218$ & $205$   & $33311$ & $183$   & $33167$ \\
    Lemma~\ref{lem:tsa1}      & $0$ & $8$ & $2$   & $218$ & $205$   & $33311$ & $183$   & $33167$ \\
    Sim. Shading Lemma        & $0$ & $8$ & $1$   & $219$ & $94$    & $33422$ & $145$   & $33205$ \\
    Lemma~\ref{lem:tsa2}      & $0$ & $8$ & $1$   & $219$ & $94$    & $33422$ & $145$   & $33205$ \\
    Lemma~\ref{lem:tsa3}      & $0$ & $8$ & $1$   & $219$ & $74$ & $33442$    & $121$   & $33229$ \\
  \end{tabular}
  \caption{The results of using the lemmas for coincidence classification of size $1$,
  $2$, and $3$ mesh patterns.\label{tab:coinc_classes}}
\end{table}


As can be seen in Table~\ref{tab:coinc_classes}, less than one percent of the
classes remain unresolved after the Shading Lemma has been applied.
We know from Lemma~\ref{lem:tsa1impliessl} and Example~\ref{ex:tsa1stronger}
that Lemma~\ref{lem:tsa1} is strictly stronger than the Shading Lemma. However,
the lines for these two lemmas in Table~\ref{tab:coinc_classes} are identical,
implying that (at least for mesh patterns with these underlying classical patterns),
this extra strength is not enough to fully resolve any more classes than the
Shading Lemma. The same phenomenon occurs for Lemma~\ref{lem:tsa2} and the
Simultaneous Shading Lemma. After Lemma~\ref{lem:tsa3} has been applied,
the remaining unresolved classes are $196$. In the next
section we further improve Lemma~\ref{lem:tsa3} and reduce
this number down to one exceptional case, which we do by hand.

\section{The Shading Algorithm}\label{sec:tsa}

We define the Shading Algorithm (Algorithm~\ref{alg:tsa1}) which iterates Lemma~\ref{lem:tsa3}.
The algorithm takes as input two mesh patterns $p = (\tau, R)$ and $q = (\tau, R')$, a
force $F$ on $\tau$ and a depth $d$. It outputs \textsc{Success} if it can show that
containment of $p$ implies containment of $q$.
%
%
The core of the algorithm is the recursive function \tsa, which takes as
input a mesh pattern $w = (\sigma, Y)$, an occurrence $c$ of $p$ in $w$ and a
depth $d$. The depth $d$ serves as a maximum recursion depth of the function.
The mesh pattern $w$ represents a state and describes an
occurrence of $p$ in an arbitrary permutation. The algorithm uses $w$ to
explore the occurrence of $p$ with maximum strength with respect to the force
$F$, which in turn is used to infer on the shadings of $p$.
Similar to the previous lemmas, the algorithm branches, depending on whether
a square in $w$ is empty or not.
In the latter case it attempts to derive a contradiction by showing that
the square can not contain a point, and is therefore empty.
We start by giving an example of what the algorithm is meant to do.

\begin{example}\label{ex:depth2}
    We want to show that an occurrence of the pattern $p = (\tau,R)$ implies an
    occurrence of the pattern $q = (\tau, R')$.
    \[
      p = \shpatt{\pattdispscale}{3}{1,3,2}[0/3, 1/2,1/3][][][] \quad\quad q = \shpatt{\pattdispscale}{3}{1,3,2}[0/3, 1/0,1/2, 2/1][][][]
    \]
    We think of $p$ as an occurrence in a permutation. The goal is to show that
    $S = R \setminus R' = \{ \boks{1,0}, \boks{2,1} \}$ can be shaded (or more precisely
    that there is an occurrence of $p$ in the permutation where $S$ is empty), or
    there is an occurrence of $q$. We choose the force $F
    = ((1,\rightarrow))$. Assume the occurrence of $p$ in the permutation maximizes
    the strength with respect to the force $F$.
    Consider the case when $\boks{1,0}$ is not empty, and add the rightmost point in
    that square to $p$.
    \[
      w_1 = p^\pright{1,0} = \shpatt{\pattdispscale}{4}{2,1,4,3}[0/4, 1/3,1/4, 2/0,2/1,2/3,2/4][][][] \quad\quad q_1 = \shpatt{\pattdispscale}{3}{1,3,2}[0/3, 1/0,1/2,1/3][][][]
    \]
    Consider the subsequence at indices $234$ in $w_1$. This is an occurrence of $q_1$,
    which implies (since it has a superset of shadings) an occurrence of $p$, which is
    stronger than the original occurrence of $p$ with respect
    to $F$. This is a contradiction. Thus $\boks{1,0}$ in $p$ must have been empty.
    Consider the case when $\boks{2,1}$ is not empty in $p$,
    and add the leftmost point in that square to $p$.
    \[
      w_2 = {(\tau,R \cup \boks{1,0})}^\pleft{2,1} = \shpatt{\pattdispscale}{4}{1,4,2,3}[0/4, 1/0,1/3,1/4, 2/1,2/2][][][] \quad\quad q_2 = \shpatt{\pattdispscale}{3}{1,3,2}[0/3, 1/0,1/3, 2/1][][][]
    \]
    We take the subsequence at indices $123$ in $w_2$. This is an occurrence of $q_2$.
    The square $\boks{1,2}$ is not shaded, and corresponds to $\boks{1,2}$ in
    $w_2$. Let us consider the rightmost point in $\boks{1,2}$ in $w_2$, giving:
    \[
      w_3 = w_2^\pright{1,2} = \shpatt{\pattdispscale}{5}{1,3,5,2,4}[0/5, 1/0,1/4,1/5, 2/0,2/2,2/3,2/4,2/5, 3/1,3/2,3/3][][][] \quad\quad q_3 = \shpatt{\pattdispscale}{3}{1,3,2}[0/3, 1/1,1/2,1/3][][][]
    \]
    Here we take the subsequence at indices $235$ in $w_3$ which is an occurrence of $q_3$,
    which implies a stronger occurrence of $p$ with respect to
    $F$. Hence, $\boks{1,2}$ in $w_2$ is empty, which implies that $\boks{1,2}$
    is empty in $q_2$. Therefore $q_2$ is a stronger occurrence of
    $p$ with respect to $F$, thus $\boks{2,1}$ is empty in $p$. The original
    occurrence of $p$ in the permutation is therefore an occurrence of $q$.
\end{example}

\begin{algorithm}[hp]
  \begin{algorithmic}[1]
    \State \textbf{Input:} Mesh patterns $p = (\tau,R)$, $F$, $q = (\tau,R')$, $d$
    \State \textbf{Output:} \textsc{Success} or \textsc{Failure}
    \Statex \,
    \Statex Shifting an occurrence after the insertion of a point
    \Function{UpdO}{$c = c_1\cdots c_n$, $\boks{x,y}$}
      \ForAll{$c_i$}
          \State \algorithmicif\ {$c_i \geq y$} \algorithmicthen\
              $c_i' \gets c_i + 1$
              \algorithmicelse\  $c_i' \gets c_i$
      \EndFor
      \State \Return $c_1'c_2'\cdots c_n'$
    \EndFunction
    \Statex Shifting a force after the insertion of a point
    \Function{UpdF}{$F' = ((t_1,a_1),\ldots,(t_k,a_k))$, $\boks{x,y}$}
      \ForAll{$t_i$}
          \State \algorithmicif\ {$t_i \geq y$} \algorithmicthen\
              $t_i' \gets t_i + 1$
              \algorithmicelse\  $t_i' \gets t_i$
      \EndFor
      \State \Return $((t_1', a_1),\ldots,(t_k',a_k))$
    \EndFunction
    \Statex \,
    \Function{\tsa}{$w = (\sigma, Y)$, $c$, $F'$, $d$}
      \For{each occurrence $c'$ of the classical pattern $\tau$ in $w$}
        \State\label{alg:tsa1:tauconstruct} Let $T$ be the maximal shading so $c'$ is an occurrence of $(\tau,T)$ in $w$
        \If{$R$ is a subset of $T$ and $\text{strength}_F(c') > \text{strength}_F(c)$}\label{alg:tsa1:base1}
          \State \Return \textsc{Success}
        \EndIf
        \If{$R'$ is a subset of $T$}\label{alg:tsa1:base2}
          \State \Return \textsc{Success}
        \EndIf
        \If{$d > 0$}\label{alg:tsa1:rec1}
            \State $\mathrm{OK} \gets \textbf{True}$
            \State Let $S = \{s_1,s_2,\dots, s_k\}$ be the squares in $w$ that correspond to $R' \setminus T$ 
            \For{$i \gets 1\ \To{}\ k$}
                \If{for all $a \in \{\uparrow, \downarrow, \leftarrow, \rightarrow\}$,
                \Statex \hskip\algorithmicindent\hskip\algorithmicindent\hskip\algorithmicindent\hskip\algorithmicindent
                $\tsa((\sigma, Y \cup \{s_1,s_2,\dots,s_{i-1}\})^{s_i a}, \textsc{UpdO}(c,s_i), \textsc{UpdF}(F',s_i), d-1)$
                \Statex \hskip\algorithmicindent\hskip\algorithmicindent\hskip\algorithmicindent\hskip\algorithmicindent
                returns \textsc{Failure}}\label{alg:tsa1:rec2}
                  \State $\mathrm{OK} \gets \textbf{False}$
                \EndIf
            \EndFor
            \If{$\mathrm{OK}$}
              \State \Return \textsc{Success}
            \EndIf
        \EndIf
      \EndFor
      \State \Return \textsc{Failure}
    \EndFunction
    \Statex \,
    \State \Return $\tsa(p, \tau, F, d)$
    \end{algorithmic}
  \caption{The Shading Algorithm.}\label{alg:tsa1}
\end{algorithm}

Before proving our main result we prove a lemma out about the case when
the function SA returns \textsc{Success}.

\begin{lemma}\label{lem:tsa4}
  For fixed mesh patterns $p$, $q$ and a force $F$ on $p$, if the function \tsa{}
  in Algorithm~\ref{alg:tsa1} returns \textsc{Success}
  for input $w = (\sigma, Y)$, $c$ (an occurrence of $p$ in $w$), $F'$ and $d$,
  then at least one of the following is false:
  \begin{enumerate}

    \item\label{it:contra1} In the strongest occurrence of $w$ with respect to $F'$ in
      any permutation $\pi$, the occurrence $c$ of $p$ in $w$ corresponds to
      the strongest occurrence of $p$ in $\pi$ with respect to $F$.
    \item\label{it:contra2} The occurrence of $w$ does not imply an occurrence
      of $q$.
  \end{enumerate}
\end{lemma}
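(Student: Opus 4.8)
The plan is to prove the equivalent statement $\neg(\eqref{it:contra1}\wedge\eqref{it:contra2})$ by induction on the depth $d$, casing on the branch of \tsa{} that returns \textsc{Success}. Throughout I would fix an arbitrary permutation $\pi$ that contains $w$ and reason relative to its strongest occurrence of $w$ with respect to $F'$; keeping $\pi$ fixed turns both items into statements about this one $\pi$, which is what makes the recursion close. Assuming \eqref{it:contra1} (if it already fails there is nothing to prove), the goal becomes to show that \eqref{it:contra2} fails, i.e.\ that every such $\pi$ contains $q$. Let $c'$ be the occurrence of $\tau$ in $w$ on which \tsa{} first succeeds, and let $T$ be the maximal shading making $c'$ an occurrence of $(\tau,T)$, as on line~\ref{alg:tsa1:tauconstruct}.

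First I would dispose of the two base cases. Under \eqref{it:contra1} the branch on line~\ref{alg:tsa1:base1} cannot be the one that fires: if $R\subseteq T$ then $c'$ is itself an occurrence of $p$ in $w$, and since embedding $w$ into $\pi$ is order-isomorphic it preserves the lexicographic comparison of $\strength_F$; so $\strength_F(c')>\strength_F(c)$ would make $c'$ realise to an occurrence of $p$ in $\pi$ strictly stronger than the realisation of $c$, contradicting that $c$ realises to the strongest occurrence of $p$. Hence \textsc{Success} must come from line~\ref{alg:tsa1:base2} or the recursive block. If line~\ref{alg:tsa1:base2} fires then $R'\subseteq T$, so $c'$ is an occurrence of $q$ in $w$; then $w$ contains $q$ and Remark~\ref{rem:meshinmeshinperm} gives $q$ in $\pi$, so \eqref{it:contra2} is false.

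The substantive case is the recursive block (line~\ref{alg:tsa1:rec1}), where $\mathrm{OK}$ is \textbf{True}. This means that for each square $s_i$ of $w$ corresponding to $R'\setminus T$ there is a direction $a_i$ for which the sub-call on $w_i:=(\sigma,Y\cup\{s_1,\dots,s_{i-1}\})^{s_i a_i}$, with occurrence $\textsc{UpdO}(c,s_i)$, force $\textsc{UpdF}(F',s_i)$ and depth $d-1$, returns \textsc{Success}. In the fixed strongest occurrence of $w$ in $\pi$ I would inspect the regions $s_1,\dots,s_k$. If all are empty, then $c'$ realises to an occurrence of $q$ (every square of $R'$ lies in $T\cup\{s_1,\dots,s_k\}$, all empty), and again $\pi$ contains $q$. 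Otherwise let $i$ be least with $s_i$ non-empty, so $s_1,\dots,s_{i-1}$ are empty; Remark~\ref{rem:addpoint} applied to $(\sigma,Y\cup\{s_1,\dots,s_{i-1}\})$ and $s_i$ then shows that $\pi$ contains all four patterns of $(\sigma,Y\cup\{s_1,\dots,s_{i-1}\})^{s_i\star}$, in particular $w_i$.

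The hard part will be feeding $\pi$ into the inductive hypothesis for the successful sub-call on $w_i$. Because I chose $i$ so that $s_i$ is non-empty in the strongest occurrence of $w$ in $\pi$, that occurrence extends --- by taking the extreme point of $s_i$ in direction $a_i$ --- to an occurrence of $w_i$; since $\textsc{UpdF}(F',s_i)$ merely relabels the force indices and ignores the inserted point, the strength of this occurrence equals that of the underlying occurrence of $w$, so it is the strongest occurrence of $w_i$ in $\pi$ and $\textsc{UpdO}(c,s_i)$ still realises to the strongest occurrence of $p$. Thus \eqref{it:contra1} holds for the sub-call relative to $\pi$, and the inductive hypothesis forces \eqref{it:contra2} to fail there, i.e.\ $w_i$ contains $q$; as $\pi$ contains $w_i$, Remark~\ref{rem:meshinmeshinperm} again yields $q$ in $\pi$. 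In every branch $\pi$ contains $q$, so \eqref{it:contra2} is false and the induction is complete. The points requiring genuine care --- and the main obstacle --- are the bookkeeping that $\textsc{UpdO}$ and $\textsc{UpdF}$ preserve the strength order and the maximality correspondence under the extreme-point insertion, together with the device of fixing $\pi$ throughout: without it the universally quantified condition~\eqref{it:contra1} for $w$ need not descend to $w_i$, since a different permutation could have its strongest occurrence of $w$ avoid $s_i$ altogether.
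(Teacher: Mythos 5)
Your proof is correct and follows essentially the same route as the paper's: induction on the depth $d$, with the two base cases discharged by lines~\ref{alg:tsa1:base1} and~\ref{alg:tsa1:base2}, and the recursive case handled by combining Remark~\ref{rem:addpoint} with the inductive hypothesis applied to the successful sub-calls on $(\sigma, Y\cup\{s_1,\dots,s_{i-1}\})^{s_i a}$. The one genuine refinement over the paper's argument is your device of fixing a single permutation $\pi$ and proving the per-$\pi$ form of the disjunction in \eqref{it:contra1}--\eqref{it:contra2}, which makes precise why the strongest occurrence of $w$ extends to a strongest occurrence of $w_i$ and hence why condition~\eqref{it:contra1} descends to the sub-call --- a point the paper's proof treats more loosely.
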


\begin{proof}
  We prove this by induction on the depth $d$. When $d = 0$, the base case
  of the procedure on line~\ref{alg:tsa1:base1} checks whether
  statement~\eqref{it:contra1} fails and the second base case of the procedure on
  line~\ref{alg:tsa1:base2} checks whether statement~\eqref{it:contra2} fails.
  Statement~\eqref{it:contra1} fails when an occurrence of $p$ that is stronger than $c$
  w.r.t.~$F$ is found in $w$. If we consider the strongest occurrence of $w$
  w.r.t.~$F'$ in some permutation, then this stronger occurrence of $p$
  in $w$ is also a stronger occurrence of $p$ in the permutation. The second
  statement fails when $w$ contains an occurrence of $q$. Since the algorithm
  returned \textsc{Success}, either statement~\eqref{it:contra1} or
  statement~\eqref{it:contra2} is false.

  Assume the lemma holds for $d$. We now prove the inductive case for $d + 1$.
  Since the call to \tsa{} returned \textsc{Success}, there is some
  $(\tau, T)$ constructed on line~\ref{alg:tsa1:tauconstruct} that resulted in
  \textsc{Success}. If one the two tests in line~\ref{alg:tsa1:base1} or
  line~\ref{alg:tsa1:base2} succeeds then we are done. We consider the case
  where the tests fail and the algorithm continues to line~\ref{alg:tsa1:rec1}.

  For each $i = 1,\ldots,k$ we claim that if there is some $a \in
  \{\uparrow,\downarrow,\leftarrow,\rightarrow\}$ such that $\tsa({(\sigma, Y
  \cup \{s_1,\ldots,s_{i-1}\})}^{s_i a}, \textsc{UpdO}(c, s_i), \textsc{UpdF}(F', s_i), d - 1)$
  returns \textsc{Success}, then an occurrence of $(\sigma, Y \cup
  \{s_1,\ldots,s_{i-1}\})$ implies an occurrence of $(\sigma, Y \cup
  \{s_1,\ldots,s_i\})$. Consider the case when $i = 1$. By
  Remark~\ref{rem:addpoint} this occurrence of $w = (\sigma, Y)$ implies an
  occurrence of either $(\sigma, Y \cup \{s_1\})$ or the occurrence of
  $w^{\boks{i,j}a}$.
  The first case is trivial.
  For the second case we will show that the containment of $w^{\boks{i,j}a}$ leads to the same
  conclusion. The functions \textsc{UpdO} and \textsc{UpdF} update the
  occurrence $c$ in $w$ and the force $F'$, respectively, such that they refer
  to the same points after the new point was inserted into $\boks{i,j}$.  Since
  the strongest occurrence of $p$ w.r.t.~$F$ is contained in the strongest
  occurrence of $w$ w.r.t.~$F'$, it is contained in the strongest occurrence of
  $w^{\boks{i,j}a}$ w.r.t.~$\textsc{UpdF}(F', s_1)$. But since the recursive
  call on $w^{\boks{i,j}a}$ with the updated $c$ and $F'$ results in a
  contradiction, the strongest occurrence of $w$ does not imply the strongest
  occurrence of $w^{\boks{i,j}a}$ and therefore implies the occurrence of
  $(\sigma, Y \cup \{s_1\})$. The same argument holds for every $i$ and hence
  the claim holds.

  We have shown that for every $i = 1,\ldots,k$, an occurrence of $(\sigma, Y
  \cup \{s_1,\ldots,s_{i-1}\})$ implies an occurrence of $(\sigma, Y \cup
  \{s_1, \ldots,s_{i-1},s_i\})$, and hence that $w$ implies the occurrence of
  $(\sigma, Y \cup \{s_1, \ldots,s_{i-1},s_i\})$. This pattern contains an
  occurrence of $q$, thus $w$ implies an occurrence of $q$, concluding our
  proof.
\end{proof}

We now state our main result.

\begin{theorem}\label{cor:tsasound}
  If Algorithm~\ref{alg:tsa1} returns \textsc{Success} for $p$, $q$ and some
  choice of $F$, then an occurrence of $p$ implies an occurrence of $q$.
\end{theorem}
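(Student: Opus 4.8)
The plan is to read the theorem off of Lemma~\ref{lem:tsa4}, which already carries the entire inductive burden, by specializing it to the root invocation of the algorithm. Since the last line of Algorithm~\ref{alg:tsa1} returns $\tsa(p, \tau, F, d)$, I would apply Lemma~\ref{lem:tsa4} with $w = p = (\tau,R)$, with $c = \tau$ the trivial (identity) occurrence of $p$ in itself, with $F' = F$, and with the given depth $d$. The hypothesis of the theorem says this call returns \textsc{Success}, so the lemma yields that at least one of statements~\eqref{it:contra1} and~\eqref{it:contra2} is false in this instance.

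The key step is then to check that statement~\eqref{it:contra1} cannot be the false one, because it holds vacuously at the root. When $w = p$, the strongest occurrence of $w$ with respect to $F' = F$ in a permutation $\pi$ is, by definition, exactly the strongest occurrence of $p$ in $\pi$ with respect to $F$; and since $c = \tau$ identifies each point of $w = p$ with itself, the correspondence induced by $c$ sends this occurrence to itself and preserves strength. Hence ``the strongest occurrence of $w$ corresponds under $c$ to the strongest occurrence of $p$'' is a tautology here, and statement~\eqref{it:contra1} is true. It follows that statement~\eqref{it:contra2} must be the false one, i.e.\ that its negation holds: an occurrence of $w = p$ does imply an occurrence of $q$. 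Unwinding the definition of $w$ at the root, this is precisely the assertion that containment of $p$ implies containment of $q$.

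The main (indeed essentially the only) obstacle is to justify that statement~\eqref{it:contra1} is genuinely trivial at the top level rather than something requiring its own argument. The delicate point is that the trivial occurrence $c = \tau$ must identify occurrences of $w = p$ with occurrences of $p$ in a way that is compatible with the force $F$, so that maximizing the strength of one is the same as maximizing the strength of the other; once this is spelled out the implication is immediate. All the substantive content---the induction on the recursion depth $d$, the branching in Remark~\ref{rem:addpoint} on whether the region of a square is empty, and the re-indexing performed by \textsc{UpdO} and \textsc{UpdF}---has already been handled inside Lemma~\ref{lem:tsa4}.
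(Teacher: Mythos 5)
Your proposal is correct and follows essentially the same route as the paper: both specialize Lemma~\ref{lem:tsa4} to the root call $\tsa(p,\tau,F,d)$, observe that statement~\eqref{it:contra1} is tautologically true there (so its failure would be a contradiction), and conclude that statement~\eqref{it:contra2} must be the false one, which is exactly the claimed implication. Your version is slightly more explicit than the paper's about why statement~\eqref{it:contra1} holds vacuously when $w=p$ and $c=\tau$, but the argument is the same.
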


\begin{proof}
  Algorithm~\ref{alg:tsa1} calls the function \tsa{} with $p$, $\tau$, $F$ and
  $d$. By Lemma~\ref{lem:tsa4}, the algorithm returns \textsc{Success} when
  either in the strongest occurrence of $p$ w.r.t.~$F$, the occurrence $\tau$
  in $p$ does not correspond to the strongest occurrence of $p$ w.r.t.~$F$, or
  an occurrence of $p$ implies the occurrence of $q$. Since the first case
  would be a contradiction, it must be the second case.
\end{proof}

When we run the algorithm with depth $d = 2$ we are able to automatically classify
mesh patterns of size $2$, showing that the total number of coincidence
classes is $220$. The results from running Algorithm~\ref{alg:tsa1}
with $d = 1,\ldots,6$ are given in Table~\ref{tab:finaltsares}. The
algorithm fully classifies the coincidence of mesh patterns with the underlying
pattern $123$ at $d = 4$, while two classes remain unresolved for $132$ at $d =
6$. The implementation is available on GitHub~\cite{tsagithub}, and further
description of the repository is given in Section~\ref{app:tsaimplementation}.

\begin{table}[ht]
  \centering
  \begin{tabular}{l|r|r|r|r|r|r}
    Pattern & $d=1$ & $d=2$ & $d=3$ & $d=4$ & $d=5$ & $d=6$   \\ \hline \hline
      $123$ & $74$  & $8$   & $6$   & $0$   & $0$   & $0$ \\
      $132$ & $121$ & $32$  & $13$  & $6$   & $2$   & $2$ \\
  \end{tabular}
  \caption{The number of unresolved classes after running the Shading Algorithm
  at depths $d = 1, \dotsc, 6$.\label{tab:finaltsares}}
\end{table}

\begin{figure}[hb]
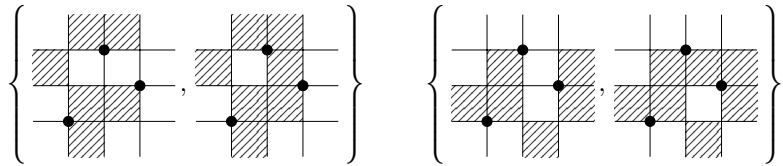

  \centering
    \[
      \left\{ \shpatt{\pattdispscale}{3}{1,3,2}[0/2,1/0,1/1,1/3,2/1,2/3][][][],
      \shpatt{\pattdispscale}{3}{1,3,2}[0/2,1/0,1/1,1/3,2/1,2/2,2/3][][][] \right\}
      \quad\quad
      \left\{ \shpatt{\pattdispscale}{3}{1,3,2}[0/1,1/1,1/2,2/0,3/1,3/2][][][],
      \shpatt{\pattdispscale}{3}{1,3,2}[0/1,1/1,1/2,2/0,2/2,3/1,3/2][][][] \right\}
    \]
  \caption{The two unresolved classes with underlying pattern $132$ after
  Algorithm~\ref{alg:tsa1} has been applied.\label{fig:twoactive}}
\end{figure}

The two remaining unresolved classes are symmetries of each other as can be seen in
Figure~\ref{fig:twoactive}.  We will therefore only prove the coincidence of
the first class since the arguments will be identical for the second class.
We start by proving that the the pattern $21$ is coincident with a \emph{decorated pattern}
(See Ulfarsson~\cite{ulfarsson2012describing} for the definition).
\begin{proposition}\label{prop:inversioncoinc}
  The patterns $p$ and $q$, shown below, are coincident. The second pattern is a decorated
  pattern that contains a (possibly empty) increasing sequence in the union of the
  squares $\boks{0,0}$ and $\boks{0,1}$, denoted by the diagonal line.
  \[
    p = \begin{tikzpicture}[baseline=(current bounding box.center), scale=\pattdispscale]
      \useasboundingbox (0.0,-0.1) rectangle (2+1.4,2+1.1);
      \draw[very thin] (0.01,0.01) grid (2+0.99,2+0.99);
      \foreach [count=\x] \y in {2,1}
        \filldraw (\x,\y) circle (4pt);
    \end{tikzpicture}
    \quad
    \asymp
    \quad
    \begin{tikzpicture}[baseline=(current bounding box.center), scale=\pattdispscale]
      \useasboundingbox (0.0,-0.1) rectangle (2+1.4,2+1.1);
      \foreach \x/\y in {0/2,1/0,1/1,1/2}
        \fill[pattern color = black!65, pattern=north east lines] (\x,\y) rectangle +(1,1);
      \draw[very thin] (0.01,0.01) grid (2+0.99,2+0.99);
      \foreach [count=\x] \y in {2,1}
        \filldraw (\x,\y) circle (4pt);

      \draw[white, line width=2pt] (0.2,0.2) -- (0.8,1.8);
      \draw[thin, line cap=round] (0.2,0.2) -- (0.8,1.8);
    \end{tikzpicture} = q
  \]
\end{proposition}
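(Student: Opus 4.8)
The statement is a coincidence $\av{p}=\av{q}$, so it suffices to establish the two containments $\av{p}\subseteq\av{q}$ and $\av{q}\subseteq\av{p}$. One of these is essentially free: an occurrence of the decorated pattern $q$ is in particular an occurrence of its underlying classical pattern $21=p$, since the shadings and the decoration only impose extra conditions. Hence, exactly as in Remark~\ref{rem:subsetshading}, any permutation containing $q$ contains $p$, which gives $\av{p}\subseteq\av{q}$. The substance of the proposition is the reverse containment $\av{q}\subseteq\av{p}$, i.e.\ the claim that every permutation $\pi$ containing $21$ already contains $q$.

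For this direction my plan is to exhibit a concrete occurrence of $q$ inside such a $\pi$. Since $\pi$ contains $21$ it is not the identity and therefore has a descent; let $i$ be the \emph{leftmost} descent and take the occurrence of $21$ given by the adjacent points $P_1=(i,\pi(i))$ and $P_2=(i+1,\pi(i+1))$. I then read off the regions of $q$ relative to this occurrence. First, because $P_1$ and $P_2$ are adjacent in position, the middle column $\boks{1,0},\boks{1,1},\boks{1,2}$ contains no point of $\pi$, disposing of those three shaded squares; this is the choice-level incarnation of the earlier observation that having an inversion is equivalent to having a descent. Second, minimality of $i$ forces the prefix $\pi(1)<\pi(2)<\cdots<\pi(i)$ to be increasing, so every point strictly to the left of $P_1$ has value strictly below $\pi(i)=y_1$; this simultaneously empties $\boks{0,2}$ and places all such points inside $\boks{0,0}\cup\boks{0,1}$. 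Finally, that same prefix is increasing, so the points landing in $\boks{0,0}\cup\boks{0,1}$ form a (possibly empty) increasing subsequence, which is exactly the decoration. Thus the chosen occurrence satisfies every shading and the decoration of $q$, so $\pi$ contains $q$, completing $\av{q}\subseteq\av{p}$.

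The step I would be most careful about is the choice of which occurrence of $21$ to use. It is tempting to select the occurrence maximizing a force such as $((2,\leftarrow))$, pushing the top point as far left as possible in the style of Lemma~\ref{lem:tsa2}; but this can fail, as in $\pi=231$, where the leftmost \emph{top of an inversion} sits at position $1$ while the larger value $3$ lies between it and the bottom point, violating the shading of $\boks{1,2}$. The correct selection is the first \emph{descent}, i.e.\ the leftmost \emph{adjacent} inversion, and this is precisely why one wants to work with the middle column already shaded before taking the leftmost occurrence: adjacency is what empties the middle column, and minimality of the descent position is what makes the entire prefix increasing and below $y_1$. The remaining care is interpretational, namely that the decoration asserts the points of $\boks{0,0}\cup\boks{0,1}$ \emph{themselves} form an increasing sequence and that this sequence is permitted to be empty; this emptiness is what keeps both $\av{p}\subseteq\av{q}$ and the base instance $\pi=21$ consistent with the claimed coincidence.
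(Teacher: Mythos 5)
Your proof is correct and follows essentially the same route as the paper's: both directions are split off, the easy one via the shading-subset observation, and the substantive one by taking the \emph{first descent} of $\pi$, using adjacency to empty the middle column and the increasingness of the prefix to empty $\boks{0,2}$ and to produce the increasing decoration in $\boks{0,0}\cup\boks{0,1}$. Your explicit justification that the whole prefix $\pi(1)<\cdots<\pi(i)$ is increasing is, if anything, slightly cleaner than the paper's phrasing, and your cautionary $\pi=231$ example correctly identifies why a force-style leftmost-top selection would not work here.
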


\begin{proof}
  Let $\pi$ be a permutation. If $\pi$ contains $q$
  then it contains $p$. Assume that $\pi$ contains $p$. Since an
  occurrence of $p$ is an inversion, $\pi$ must have a descent.
  Consider the first descent in $\pi$, $ab$, with $a>b$.
    \[
    \begin{tikzpicture}[baseline=(current bounding box.center), scale=\pattdispscale]
      \useasboundingbox (0.0,-0.1) rectangle (2+1.4,2+1.1);
      \foreach \x/\y in {1/0,1/1,1/2}
        \fill[pattern color = black!65, pattern=north east lines] (\x,\y) rectangle +(1,1);
      \draw[very thin] (0.01,0.01) grid (2+0.99,2+0.99);
      \foreach [count=\x] \y in {2,1}
        \filldraw (\x,\y) circle (4pt);
    \end{tikzpicture}
  \]
  Consider the points in the region left of the
  point $a$ in $p$, the squares $\boks{0,0}$, $\boks{0,1}$ and
  $\boks{0,2}$. If this region contains an inversion, then it must contain a
  descent. As we picked the leftmost occurrence of $p$ already, this
  region must avoid any $21$ and can only contain an increasing
  sequence. Furthermore, the square $\boks{0,2}$ must be empty, since the
  rightmost point in that region would realize a descent with its right adjacent
  point. Thus, this descent $ab$ is an occurrence of $q$.
\end{proof}

\begin{proposition}\label{prop:tsalastcase}
  The following mesh patterns are coincident.
  \[
    p = \shpatt{\pattdispscale}{3}{1,3,2}[0/2,1/0,1/1,1/3,2/1,2/3][][][]
    \asymp
    \shpatt{\pattdispscale}{3}{1,3,2}[0/2,1/0,1/1,1/3,2/1,2/2,2/3][][][] = q
  \]
\end{proposition}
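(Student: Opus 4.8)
The direction $\av{p}\subseteq\av{q}$ is immediate: the shading of $p$ is contained in that of $q=(\tau,R\cup\{\boks{2,2}\})$, so by Remark~\ref{rem:subsetshading} every permutation that contains $q$ also contains $p$. The whole content of the proposition is the reverse implication, that containment of $p$ forces containment of $q$. Since the two patterns differ only in the square $\boks{2,2}$, this amounts to proving that a permutation with an occurrence of $p$ has an occurrence of $p$ in which the region corresponding to $\boks{2,2}$ is \emph{also} empty.

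I would set this up as an extremal argument in the spirit of Lemma~\ref{lem:tsa2}. Fix $\pi\in\co{p}$ and label the three points of an occurrence by the roles they play in $\tau=132$: a bottom point $u_1$, a top point $u_3$, and a middle point $u_2$, so that $u_3u_2$ is an inversion whose enclosed square is exactly $\boks{2,2}$. Using Definition~\ref{def:force}, I would choose the occurrence that is strongest for the force $F=((2,\leftarrow))$, that is, the one pushing the middle point $u_2$ as far left as possible. Suppose the region of $\boks{2,2}$ is nonempty and let $z$ be its leftmost point; the plan is to promote $z$ to a new middle point. Checking the six shaded squares of $p$ for the candidate occurrence $u_1u_3z$ shows that, precisely because $z$ is leftmost in $\boks{2,2}$, the squares $\boks{0,2}$, $\boks{1,0}$, $\boks{1,3}$, $\boks{2,1}$ and $\boks{2,3}$ all stay empty; hence $u_1u_3z$ would be an occurrence of $p$ with $u_2$ strictly further left, contradicting the choice of occurrence --- \emph{unless} the single remaining square $\boks{1,1}$ fails. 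That failure means there is a point $w$ lying between $u_1$ and $u_3$ in position with value strictly between those of $u_2$ and $z$, which sits in the unshaded square $\boks{1,2}$ of the original occurrence. This $w$ is the one and only obstruction.

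To remove the obstruction I would call on Proposition~\ref{prop:inversioncoinc}. The difficulty is that the offending points form an increasing run in the value band between $u_2$ and $u_3$, and replacing $u_2$ by a single neighbour merely shifts the obstruction one step left; this is exactly the behaviour that defeats any bounded-depth iteration of Lemma~\ref{lem:tsa3}, and it is why the Shading Algorithm leaves this class open. Proposition~\ref{prop:inversioncoinc} handles the whole run in one stroke: applied to the sub-permutation of $\pi$ lying strictly above and to the right of $u_1$ (in which $u_3u_2$ is an inversion, so the coincidence $21\asymp$ the decorated inversion applies), it yields a descent $ab$ whose interior is empty and to whose lower-left lies only an increasing sequence. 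I would then take $ab$ as the new $(u_3,u_2)$ pair and re-select the bottom point to be the rightmost point lying left of $a$ with value below that of $b$. The empty interior of the descent is precisely the emptied $\boks{2,2}$, the ``nothing above-left'' clause of the decorated pattern clears $\boks{1,3}$, and the choice of bottom point clears $\boks{1,0}\cup\boks{1,1}$.

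The step I expect to be the main obstacle is the final reconciliation: verifying that after the bottom point is re-selected \emph{all} six shaded squares of $p$ are simultaneously empty for the new occurrence --- in particular the square $\boks{0,2}$ to the left of the new bottom point and the square $\boks{2,1}$ between the new top and middle points --- arguing from the increasing-sequence decoration of Proposition~\ref{prop:inversioncoinc} rather than from a plain shading. It is exactly the need to collapse an increasing run of unbounded length in a single step, which no finite depth of the Shading Algorithm can achieve, that isolates this as the one case requiring a by-hand argument and that makes the decorated pattern of Proposition~\ref{prop:inversioncoinc} the essential tool.
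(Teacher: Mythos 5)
Your setup is sound as far as it goes: the reduction to the single direction $\co{p}\subseteq\co{q}$ via Remark~\ref{rem:subsetshading}, the extremal choice of occurrence pushing the middle point left, and the computation showing that promoting the leftmost point $z$ of $\boks{2,2}$ fails only because of points of the original (unshaded) square $\boks{1,2}$ lying below $z$ in value --- all of that is correct, and you have also correctly identified Proposition~\ref{prop:inversioncoinc} as the tool needed to collapse an unboundedly long increasing run in one step. But the step you defer as ``the main obstacle'' is not a routine verification; it is where the argument, as you have set it up, actually breaks. Applying Proposition~\ref{prop:inversioncoinc} to the entire quadrant above and to the right of $u_1$ produces the \emph{first descent} $ab$ of that sub-permutation, whose position and value band have no relation to the original occurrence: $a$ and $b$ may sit far to the right, and the permitted increasing sequence in the decorated squares $\boks{0,0}\cup\boks{0,1}$ lives in the value band $(\mathrm{val}(b),\mathrm{val}(a))$ relative to \emph{that} descent. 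After you re-select the bottom point $u_1'$ as the rightmost point left of $a$ with value below $\mathrm{val}(b)$, a point of that increasing sequence with value in $(\mathrm{val}(b),\mathrm{val}(a))$ can lie to the left of $u_1'$ (for instance when $u_1'$ is a point below $\mathrm{val}(u_1)$, about which the decorated pattern says nothing), and it then occupies the shaded square $\boks{0,2}$ of the rebuilt occurrence. Similarly, the column between $a$ and $b$ is only guaranteed empty \emph{above} $\mathrm{val}(u_1)$, so $\boks{2,1}$ of the rebuilt occurrence is not controlled either. These are not loose ends to be tidied; they are counterexamples to the reconciliation you would need, so the proof is incomplete.

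The paper avoids this by never discarding the original occurrence. It first splits on whether the region corresponding to the unshaded square $\boks{1,2}$ of the \emph{fixed} occurrence contains an inversion or only a (possibly empty) increasing sequence. In the increasing case it inserts the leftmost point of $\boks{2,2}$ and then the rightmost point of one sub-square, and exhibits an explicit triple forming $q$. In the inversion case it applies Proposition~\ref{prop:inversioncoinc} \emph{inside the rectangle} $\boks{1,2}$ only, so the resulting descent $ab$ lies between $u_1$ and $u_3$ in position and between $\mathrm{val}(u_2)$ and $\mathrm{val}(u_3)$ in value; every one of $p$'s original shadings then still applies verbatim to the candidate triples $(u_1,a,b)$ or $(w,a,b)$, and the decorated pattern supplies exactly the two missing squares. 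If you want to salvage your version, restrict the application of Proposition~\ref{prop:inversioncoinc} to that bounded region rather than to the whole quadrant; the extremal force on $u_2$ then becomes unnecessary.
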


\begin{proof}
  By Remark~\ref{rem:subsetshading} it suffices to show that an occurrence of $p$ implies an
  occurrence of $q$.
  Let $\pi$ be a permutation that contains $p$. Either the points in the region corresponding
  to the square $\boks{1,2}$
  form an increasing sequence~(possibly empty) or contains an inversion. We will
  show in both cases, which are depicted below, that the occurrence implies an
  occurrence of $q$.
  \[
    \begin{tikzpicture}[baseline=(current bounding box.center), scale=\pattdispscale]
      \useasboundingbox (0.0,-0.1) rectangle (3+1.4,3+1.1);
      \foreach \x/\y in {0/2,1/0,1/1,1/3,2/1,2/3}
        \fill[pattern color = black!65, pattern=north east lines] (\x,\y) rectangle +(1,1);
      \draw[very thin] (0.01,0.01) grid (3+0.99,3+0.99);
      \foreach [count=\x] \y in {1,3,2}
        \filldraw (\x,\y) circle (4pt);

      \draw[line cap=round] (1.2,2.2) -- (1.8,2.8);
    \end{tikzpicture}
    \quad\quad
    \begin{tikzpicture}[baseline=(current bounding box.center), scale=\pattdispscale]
      \useasboundingbox (0.0,-0.1) rectangle (3+1.4,3+1.1);
      \foreach \x/\y in {0/2,1/0,1/1,1/3,2/1,2/3}
        \fill[pattern color = black!65, pattern=north east lines] (\x,\y) rectangle +(1,1);
      \draw[very thin] (0.01,0.01) grid (3+0.99,3+0.99);
      \foreach [count=\x] \y in {1,3,2}
        \filldraw (\x,\y) circle (4pt);
      \draw[very thin] (1.33,2) -- (1.33,3);
      \draw[very thin] (1.66,2) -- (1.66,3);
      \draw[very thin] (1,2.33) -- (2,2.33);
      \draw[very thin] (1,2.66) -- (2,2.66);
      \filldraw (1.33,2.66) circle (2pt);
      \filldraw (1.66,2.33) circle (2pt);
    \end{tikzpicture}
  \]

  In the first case, when the square $\boks{1,2}$ contains an
  increasing sequence, the square $\boks{2,2}$ either contains a point or is
  empty. If the square is empty, we are done, since this occurrence will then
  be an occurrence of $q$. Otherwise we pick the leftmost point in the square
  and place it into the occurrence of the pattern $p$.
  \[
    \begin{tikzpicture}[baseline=(current bounding box.center), scale=\pattdispscale]
      \useasboundingbox (0.0,-0.1) rectangle (3+1.4,3+1.1);
      \foreach \x/\y in {0/2,1/0,1/1,1/3,2/1,2/3}
        \fill[pattern color = black!65, pattern=north east lines] (\x,\y) rectangle +(1,1);
      \draw[very thin] (0.01,0.01) grid (3+0.99,3+0.99);
      \foreach [count=\x] \y in {1,3,2}
        \filldraw (\x,\y) circle (4pt);

      \draw[line cap=round] (1.2,2.2) -- (1.8,2.8);
    \end{tikzpicture}
    \quad
    \stackanchor{ with $\boks{2,2}$ non-empty }{ implies the occurrence of }
    \quad
    \begin{tikzpicture}[baseline=(current bounding box.center), scale=\pattdispscale]
      \useasboundingbox (0.0,-0.1) rectangle (3+1.4,3+1.1);
      \foreach \x/\y in {0/2,1/0,1/1,1/3,2/1,2/3}
        \fill[pattern color = black!65, pattern=north east lines] (\x,\y) rectangle +(1,1);
      \draw[very thin] (0.01,0.01) grid (3+0.99,3+0.99);
      \foreach [count=\x] \y in {1,3,2}
        \filldraw (\x,\y) circle (4pt);
      \filldraw(2.5, 2.5) circle (2pt);
      \draw[very thin] (2.5, 0.01) -- (2.5, 3.99);
      \draw[very thin] (0.01,2.5) -- (3.99, 2.5);
      \draw[white, line width=2pt] (1.2,2.2) -- (1.8,2.8);
      \draw[line cap=round] (1.2,2.2) -- (1.8,2.8);
      \fill[pattern color = black!65, pattern=north east lines] (2,2) rectangle +(0.5,1);
    \end{tikzpicture}
    =
    \
    \begin{tikzpicture}[baseline=(current bounding box.center), scale=\pattdispscale]
      \useasboundingbox (0.0,-0.1) rectangle (4+1.4,4+1.1);
      \foreach \x/\y in {0/2,1/0,1/1,1/4,2/1,3/1,0/3,2/4,3/4,2/2,2/3}
        \fill[pattern color = black!65, pattern=north east lines] (\x,\y) rectangle +(1,1);
      \draw[very thin] (0.01,0.01) grid (4+0.99,4+0.99);
      \foreach [count=\x] \y in {1,4,3,2}
        \filldraw (\x,\y) circle (4pt);
      \draw[white, line width=2pt] (1.2,2.2) -- (1.8,3.8);
      \draw[line cap=round] (1.2,2.2) -- (1.8,3.8);
    \end{tikzpicture}
  \]
  The square $\boks{1,2}$ is either empty or
  contains a point.  If it is empty, the occurrence implies an occurrence of
  $q$ with the subsequence $143$. Otherwise,
  the square contains a point. Pick the rightmost point, which is also the
  highest point.
  \[
    \begin{tikzpicture}[baseline=(current bounding box.center), scale=\pattdispscale]
      \useasboundingbox (0.0,-0.1) rectangle (4+1.4,4+1.1);
      \foreach \x/\y in {0/2,1/0,1/1,1/4,2/1,3/1,0/3,2/4,3/4,2/2,2/3}
        \fill[pattern color = black!65, pattern=north east lines] (\x,\y) rectangle +(1,1);
      \draw[very thin] (0.01,0.01) grid (4+0.99,4+0.99);
      \foreach [count=\x] \y in {1,4,3,2}
        \filldraw (\x,\y) circle (4pt);
      \draw[white, line width=2pt] (1.2,2.2) -- (1.8,3.8);
      \draw[line cap=round] (1.2,2.2) -- (1.8,3.8);
    \end{tikzpicture}
    \stackanchor{ with $\boks{1,2}$ non-empty }{ implies the occurrence of }
    \begin{tikzpicture}[baseline=(current bounding box.center), scale=\pattdispscale]
      \useasboundingbox (0.0,-0.1) rectangle (4+1.4,4+1.1);
      \foreach \x/\y in {0/2,1/0,1/1,1/4,2/1,3/1,0/3,2/4,3/4,2/2,2/3}
        \fill[pattern color = black!65, pattern=north east lines] (\x,\y) rectangle +(1,1);
      \draw[very thin] (0.01,0.01) grid (4+0.99,4+0.99);
      \foreach [count=\x] \y in {1,4,3,2}
        \filldraw (\x,\y) circle (4pt);
      \filldraw (1.5, 2.5) circle (2pt);

      \fill[pattern color = black!65, pattern=north east lines] (1.5,2.0) rectangle +(0.5,1);
      \fill[pattern color = black!65, pattern=north east lines] (1,2.5) rectangle +(0.5,1.5);

      \draw[white, line width=2pt] (1.1,2.1) -- (1.4,2.4);
      \draw[line cap=round] (1.1,2.1) -- (1.4,2.4);
      \draw[white, line width=2pt] (1.6,3.2) -- (1.9,3.8);
      \draw[line cap=round] (1.6,3.2) -- (1.9,3.8);

      \draw[very thin] (1.5,2.0) -- (1.5,4.0);
      \draw[very thin] (1,2.5) -- (2,2.5);
    \end{tikzpicture}
    \hspace{-3pt}
    =
    \begin{tikzpicture}[baseline=(current bounding box.center), scale=\pattdispscale]
      \useasboundingbox (0.0,-0.1) rectangle (5+1.4,5+1.1);
      \foreach \x/\y in {0/2,0/3,0/4,1/0,1/1,1/5,3/1,4/1,3/5,4/5,3/2,3/3,3/4,2/0,2/1,2/2,2/3,2/5,1/3,1/4}
        \fill[pattern color = black!65, pattern=north east lines] (\x,\y) rectangle +(1,1);
      \draw[very thin] (0.01,0.01) grid (5+0.99,5+0.99);
      \foreach [count=\x] \y in {1,3,5,4,2}
        \filldraw (\x,\y) circle (4pt);

      \draw[white, line width=2pt] (1.2,2.2) -- (1.8,2.8);
      \draw[line cap=round] (1.2,2.2) -- (1.8,2.8);
      \draw[white, line width=2pt] (2.2,4.2) -- (2.8,4.8);
      \draw[line cap=round] (2.2,4.2) -- (2.8,4.8);
    \end{tikzpicture}
  \]
  This occurrence of $p$ with the inferred points forms an occurrence of $q$,
  namely the subsequence $354$.

  Consider the case when the square $\boks{1,2}$ in $p$ contains an
  inversion. By Proposition~\ref{prop:inversioncoinc} an inversion
  is coincident with the decorated pattern in that proposition, which we place
  instead of the inversion into the occurrence of $p$.
  \[
    \begin{tikzpicture}[baseline=(current bounding box.center), scale=\pattdispscale]
      \useasboundingbox (0.0,-0.1) rectangle (3+1.4,3+1.1);
      \foreach \x/\y in {0/2,1/0,1/1,1/3,2/1,2/3}
        \fill[pattern color = black!65, pattern=north east lines] (\x,\y) rectangle +(1,1);
      \draw[very thin] (0.01,0.01) grid (3+0.99,3+0.99);
      \foreach [count=\x] \y in {1,3,2}
        \filldraw (\x,\y) circle (4pt);
      \draw[very thin] (1.33,2) -- (1.33,3);
      \draw[very thin] (1.66,2) -- (1.66,3);
      \draw[very thin] (1,2.33) -- (2,2.33);
      \draw[very thin] (1,2.66) -- (2,2.66);
      \filldraw (1.33,2.66) circle (2pt);
      \filldraw (1.66,2.33) circle (2pt);
    \end{tikzpicture}
    \text{ implies the occurrence of }
    \
    \begin{tikzpicture}[baseline=(current bounding box.center), scale=\pattdispscale]
      \useasboundingbox (0.0,-0.1) rectangle (3+1.4,3+1.1);
      \fill[pattern color = black!65, pattern=north east lines] (1.33,2.0) rectangle +(0.33,1);
      \fill[pattern color = black!65, pattern=north east lines] (1,2.66) rectangle +(0.33,0.33);
      \foreach \x/\y in {0/2,1/0,1/1,1/3,2/1,2/3}
        \fill[pattern color = black!65, pattern=north east lines] (\x,\y) rectangle +(1,1);
      \draw[very thin] (0.01,0.01) grid (3+0.99,3+0.99);
      \foreach [count=\x] \y in {1,3,2}
        \filldraw (\x,\y) circle (4pt);
      \draw[very thin] (1.33,2) -- (1.33,3);
      \draw[very thin] (1.66,2) -- (1.66,3);
      \draw[very thin] (1,2.33) -- (2,2.33);
      \draw[very thin] (1,2.66) -- (2,2.66);
      \filldraw (1.33,2.66) circle (2pt);
      \filldraw (1.66,2.33) circle (2pt);

      \draw[white, line width=2pt] (1.05,2.05) -- (1.28,2.61);
      \draw[line cap=round] (1.05,2.05) -- (1.24,2.57);
    \end{tikzpicture}
    =
    \
    \begin{tikzpicture}[baseline=(current bounding box.center), scale=\pattdispscale]
      \useasboundingbox (0.0,-0.1) rectangle (5+1.4,5+1.1);

      \foreach \x/\y in {0/2,0/3,0/4,0/5,1/0,1/1,1/4,1/5,2/0,2/1,2/2,2/3,2/4,2/5,3/0,3/1,3/5,4/1,4/5}
        \fill[pattern color = black!65, pattern=north east lines] (\x,\y) rectangle +(1,1);

      \draw[very thin] (0.01,0.01) grid (5+0.99,5+0.99);
      \foreach [count=\x] \y in {1,4,3,5,2}
        \filldraw (\x,\y) circle (4pt);
      \draw[line cap=round] (1.25,2.35) -- (1.85,3.75);
    \end{tikzpicture}
  \]
  The square $\boks{1,2}$ in the occurrence is either empty or contains a
  point.  If the square is empty, the occurrence forms an occurrence of $q$
  with the subsequence $132$. We assume it contains a point and place
  the rightmost point in the square into the pattern of the occurrence.
  \[
    \begin{tikzpicture}[baseline=(current bounding box.center), scale=\pattdispscale]
      \useasboundingbox (0.0,-0.1) rectangle (5+1.4,5+1.1);

      \foreach \x/\y in {0/2,0/3,0/4,0/5,1/0,1/1,1/4,1/5,2/0,2/1,2/2,2/3,2/4,2/5,3/0,3/1,3/5,4/1,4/5}
        \fill[pattern color = black!65, pattern=north east lines] (\x,\y) rectangle +(1,1);

      \draw[very thin] (0.01,0.01) grid (5+0.99,5+0.99);
      \foreach [count=\x] \y in {1,4,3,5,2}
        \filldraw (\x,\y) circle (4pt);
      \draw[line cap=round] (1.25,2.35) -- (1.85,3.75);
    \end{tikzpicture}
    \
    \text{implies the occurrence of }
    \
    %
    %
    %
    %
    %
    \begin{tikzpicture}[baseline=(current bounding box.center), scale=\pattdispscale]
      \useasboundingbox (0.0,-0.1) rectangle (6+1.4,6+1.1);

      \foreach \x/\y in {0/2,0/3,0/4,0/5,0/6,1/0,1/1,1/3,1/4,1/5,1/6,2/0,2/1,2/2,2/3,2/5,2/6,3/0,3/1,3/2,3/3,3/4,3/5,3/6,4/0,4/1,4/6,5/1,5/6}
        \fill[pattern color = black!65, pattern=north east lines] (\x,\y) rectangle +(1,1);

      \draw[very thin] (0.01,0.01) grid (6+0.99,6+0.99);
      \foreach [count=\x] \y in {1,3,5,4,6,2}
        \filldraw (\x,\y) circle (4pt);
      \draw[line cap=round] (2.1,4.1) -- (2.8,4.8);
      \draw[line cap=round] (1.1,2.1) -- (1.8,2.8);
    \end{tikzpicture}
  \]
  The subsequence $354$ forms an occurrence of $q$, and we conclude that an
  occurrence of $p$ implies an occurrence of $q$.
\end{proof}

This completes the coincidence classification of mesh patterns of sizes $1$, $2$, and $3$.
There are $8$ coincidence classes of mesh patterns of size $1$. The number of classes for
longer patterns are shown in Table~\ref{tab:tsafinalres}.
\begin{table}[hb]
  \centering
  \begin{tabular}{l|r|r|r}
    Pattern & $12$ & $123$ & $132$ \\ \hline \hline
    Number of mesh patterns & $512$ & $65536$ & $65536$ \\
    Coincidence classes & $220$ & $33516$ & $33350$ \\ \hline

    Coincidence classes of size $1$ & $161$ & $23908$ & $23035$ \\
    Coincidence classes of size $2$ & $37$ & $6116$ & $6598$ \\
    Coincidence classes of size $3$ & $2$ & $132$ & $286$ \\
    Coincidence classes of size $4$ & $11$ & $1961$ & $2182$ \\
    Coincidence classes of size $5$ & $0$ & $16$ & $46$ \\
    Coincidence classes of size $6$ & $0$ & $172$ & $164$ \\
    Coincidence classes of size $7$ & $0$ & $0$ & $0$ \\
    Coincidence classes of size $\geq 8$ & $9$ & $1211$ & $1039$ \\
  \end{tabular}
  \caption{Number of coincidence classes of mesh patterns with underlying
  patterns $12$, $123$ and $132$.\label{tab:tsafinalres}}
\end{table}
The number of singleton classes in Table~\ref{tab:tsafinalres} shows how effective
the experimental classification is, since roughly a third of the patterns are
not coincident with any other mesh pattern. From the table we can also see that for $123$ and
$132$ more than $90\%$ of the coincidence classes contain at most four mesh
patterns, which greatly reduces the number of comparisons in contrast to
running the algorithm on every pair of patterns.

\section{Applications of mesh patterns and the force to enumeration}\label{sec:enum}
Knowing coincidences of mesh patterns can be used to enumerate permutation classes,
as the following example shows.

\begin{example}\label{ex:meshbinaryapplication}
  Let $B = \{1234,1243,1324,1342,1423,2314,2341,3124,4123\}$ and consider the
  permutation class $\av{B}$. Since $123$ appears as a subpattern in every pattern
  in $B$ we can write $\av{B}$ as the disjoint union
  \[
  	\av{B} = \av{123} \sqcup (\av{B} \cap \co{123}).
  \]
  The enumeration of $\av{123}$ is well known to be given by the Catalan numbers, which have
  the generating function
  \[
  	C(x) = \frac{1-\sqrt{1-4x}}{2x}.
  \]
  Therefore we only need to consider
  $\av{B} \cap \co{123}$. Using any of the lemmas in Table~\ref{tab:coinc_classes}
  we can show that $123$ is coincident with the pattern
  \[
    \shpatt{\pattdispscale}{3}{1,2,3}[0/1]
  \]
  A permutation in $\av{B}$ contains the previous pattern if and only if it contains
  the following pattern.
  \[
    \shpatt{\pattdispscale}{3}{1,2,3}[0/0,0/1,0/2,0/3,1/1,1/2,1/3,2/0,2/1,2/2,2/3,3/0,3/1,3/2,3/3]
  \]
  This is because the existence of a point in any square (except $\boks{0,1}$) would
  imply an occurrence of one of the basis elements in $B$.
  In such a
  permutation the region corresponding to the square $\boks{1,0}$ must
  avoid $12$, or else an occurrence of $3124$ would be realized. This implies
  that every permutation in $\av{B} \cap \co{123}$ has a unique occurrence of
  $123$ that is an occurrence of the pattern above. Moreover,
  any decreasing sequence of points can be placed in this region without creating
  a basis element. Every permutation in $\av{B} \cap \co{123}$ can therefore be
  constructed by starting with the permutation $123$ and placing a $12$ avoiding
  permutation in the region corresponding to $\boks{1,0}$.
  Hence, we obtain the following generating function $F_B(x)$ of the permutation class
  $\av{B}$:
  \begin{align*}
    F_{B}(x) &= F_{123}(x) + x^3 \cdot F_{12}(x)\\
             &= C(x) + \frac{x^3}{1 - x}.
  \end{align*}
\end{example}

This example motivates the following definitions.
\begin{definition}
	Let $\mathcal{C}$ be a permutation class.
	Two patterns $p$ and $q$ such that $\av{p} \cap \mathcal{C} = \av{q} \cap \mathcal{C}$
	are said to \emph{coincident with respect to $\mathcal{C}$}, denoted
	$p \overset{\mathcal{C}}{\asymp} q$.
\end{definition}

  \begin{definition}\label{def:binpatt}
  	Define $\occ{p}{\pi}$ as the number of occurrences of a pattern $p$
in a permutation $\pi$.
    A pattern $p$ is \emph{binary} if $\occ{p}{\pi} \leq
    1$ for every permutation $\pi$.
    A pattern $p$ is \emph{binary with respect to a permutation class}
    $\mathcal{C}$ if $\occ{p}{\pi} \leq 1$ for every $\pi \in
    \mathcal{C}$.
  \end{definition}

This example gives an approach to enumerating permutation classes $\mathcal{C}$:
First choose a classical pattern $p \in \mathcal{C}$	, and find a
coincident pattern $q \asymp p$ with the Shading Algorithm. Add in the shadings
implied by the basis of $\mathcal{C}$, obtaining a pattern $q' \overset{\mathcal{C}}{\asymp} p$.
If $q'$ is binary with respect to $\mathcal{C}$ it can be used to find
structural information about $\mathcal{C} \cap \co{p}$.

It is not clear what is a good choice for the pattern $p \in \mathcal{C}$, or for
the coincident pattern $q \asymp p$. In more generality, how can it be determined
when a pattern can only occur at most once in any permutation?

\begin{proposition}
    For every classical permutation pattern $p$ (except $\epsilon$, the pattern of length $0$)
    and every $i > 0$ there exists a permutation $\pi$ such that
    $\occ{p}{\pi} \geq i$.\label{prop:nobinary}
\end{proposition}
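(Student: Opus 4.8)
The plan is to exploit the fact that a nonempty pattern can be tiled into a single permutation as many pairwise disjoint copies as we like; the empty pattern $\epsilon$ is excluded precisely because it occurs exactly once in every permutation, so $\occ{\epsilon}{\pi} = 1$ can never exceed $1$. So fix a nonempty classical pattern $p$ with $|p| = k \geq 1$ and fix $i > 0$. I would construct one permutation $\pi$ that contains at least $i$ distinct occurrences of $p$ by stacking $i$ shifted copies of $p$ along the main diagonal (this is just the $i$-fold direct sum of $p$ with itself, but I will describe it concretely to avoid relying on notation the paper has not introduced).

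Concretely, set $m = i$ and define $\pi \in \symm_{mk}$ by specifying, for each block index $t \in \{0, 1, \dots, m-1\}$ and each $j \in [1,k]$, the value $\pi(tk + j) = tk + p(j)$. In words, the $t$-th block occupies the positions $tk+1, \dots, tk+k$ and the values $tk+1, \dots, tk+k$, arranged internally according to $p$. The first thing to check is that this is genuinely a permutation of $[1,mk]$: the position sets $\{tk+1, \dots, tk+k\}$ for $t = 0, \dots, m-1$ partition $[1,mk]$, and within block $t$ the map $j \mapsto tk + p(j)$ is a bijection onto $\{tk+1, \dots, tk+k\}$, so $\pi$ is a bijection.

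Next I would verify that each block furnishes an occurrence of $p$. Reading block $t$ at positions $tk+1 < \cdots < tk+k$ gives the values $tk + p(1), \dots, tk + p(k)$; since adding the constant $tk$ preserves every order relation, this subsequence is order isomorphic to $p(1) \cdots p(k)$, i.e.\ it is an occurrence of $p$. The $m$ blocks therefore yield $m$ occurrences of $p$, and these are pairwise distinct because they use disjoint index sets $\{tk+1, \dots, tk+k\}$. Hence $\occ{p}{\pi} \geq m = i$, which is exactly the claimed bound.

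I expect essentially no real obstacle here: the construction is elementary, and the only points requiring a line of justification are that $\pi$ is well defined and that the order-preserving shift $j \mapsto tk + p(j)$ keeps each block order isomorphic to $p$. The single thing one must be careful about is that $\occ{p}{\pi}$ counts occurrences as index sequences, so I should emphasize that the $m$ diagonal blocks use disjoint position sets and thus count as distinct occurrences. (Occurrences of $p$ spanning several blocks may well create additional copies, but the lower bound from the $m$ diagonal blocks alone already suffices, so there is no need to analyze them.)
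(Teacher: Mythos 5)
Your proof is correct, but it takes a different construction from the paper's. You build the $i$-fold direct sum of $p$ with itself, placing $i$ shifted copies of $p$ along the diagonal of a permutation of size $ik$; the $i$ blocks occupy pairwise disjoint index sets, so they immediately give $i$ distinct occurrences. The paper instead grows a single permutation incrementally: starting from $p$ itself (with maximum value $n$), it inserts a new element $n+1$ immediately to the left of the position of $n$, so that either of $n$ or $n+1$ can play the role of the largest element of $p$, yielding at least two occurrences; repeating this with $n+2, n+3, \dots$ adds at least one new occurrence per step. The paper's construction is more economical in size — it reaches $i$ occurrences with a permutation of size only $n+i-1$, versus your $ik$ — while your direct-sum argument is arguably easier to verify rigorously, since the $i$ occurrences are disjoint by construction and there is no need to track how newly inserted elements interact with previously counted occurrences. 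Both arguments are elementary and complete; your care in noting that occurrences are counted as index sequences (so disjoint blocks give distinct occurrences) addresses the only point that genuinely needs justification.
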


  \begin{proof}
    Let $p$ be a classical permutation pattern of size $n$.  Insert the
    element $n + 1$ to the left of the element $n$. This new permutation is of size
    $n + 1$ and has at least two occurrences of $p$. The occurrence where the element
    $n$ corresponds to the element $n$ in $p$ and the occurrence where $n + 1$
    corresponds to the element $n$ in $p$. This process can be repeated and the
    element $n + 2$ added, giving a permutation with at least one more
    occurrence of $p$ than the previous permutation. This process can be continued to
    obtain a permutation with as many occurrences of $p$ as desired.
  \end{proof}

  This proposition does not hold for mesh patterns in general. Take for
  example the mesh pattern $m = \textmpattern{\patttextscale}{1}{1}{0/0,0/1}{}$. An occurrence
  of $m$ in a permutation corresponds to the leftmost point in the
  permutation.
  Every permutation has at most one leftmost point, hence every permutation has
  at most one occurrence of $m$.
  There are also bigger and more complex mesh patterns that exhibit this
  behavior, i.e., occur exactly once or never in any permutation.

  Although Proposition~\ref{prop:nobinary} implies that no classical
  permutation pattern is binary, the pattern $m$ shows that mesh patterns can
  be binary. The pattern contains a single point that is \emph{forced}, in a
  similar manner to what was discussed in Definition~\ref{def:force}.
  Larger patterns of this type exist, such as the pattern
  $q = \textmpattern{\patttextscale}{2}{1,2}{0/0,1/0,2/0,2/0,2/1,2/2}{}$.
  The $1$ in an occurrence of $q$ in $\pi$ corresponds to $1$. Similarly $2$
  must be the rightmost point in $\pi$.
  A point in a mesh pattern of size $n$ is \emph{anchored to a boundary} if
  it is an occurrence of at least one of the mesh patterns
  $\textmpattern{\patttextscale}{1}{1}{0/1,1/1}{}$,
  $\textmpattern{\patttextscale}{1}{1}{0/0,1/0}{}$,
  $\textmpattern{\patttextscale}{1}{1}{0/0,0/1}{}$,
  $\textmpattern{\patttextscale}{1}{1}{1/0,1/1}{}$.
  Furthermore, a point is \emph{anchored}
  to another point if together they are an occurrence of at least one of the mesh patterns
  $\textmpattern{\patttextscale}{2}{1,2}{1/0,1/1,1/2}{}$,
  $\textmpattern{\patttextscale}{2}{2,1}{1/0,1/1,1/2}{}$,
  $\textmpattern{\patttextscale}{2}{1,2}{0/1,1/1,2/1}{}$,
  $\textmpattern{\patttextscale}{2}{2,1}{0/1,1/1,2/1}{}$.

  \begin{definition}
    A mesh pattern $p$ is \emph{anchored} if for every point $p_{i_1}$
    of the pattern there exists a sequence of points $p_{i_1} p_{i_2} \dots p_{i_n}$ such
    that $p_{i_\ell}$ is anchored to $p_{i_{\ell+1}}$ and $p_{i_n}$ is
    anchored to a boundary.\label{def:anchored}
  \end{definition}

  An example of an anchored pattern is given in Figure~\ref{fig:anchoredex} in
  which the point $(5,5)$ is anchored to the top boundary. The point $(4,1)$ is
  anchored to $(5,5)$, $(1,2)$ to $(4,1)$, $(3,3)$ to $(1,2)$ and $(2,4)$ to
  $(3,3)$, hence each point is anchored through a sequence of points to a boundary-anchored point.

  \begin{figure}
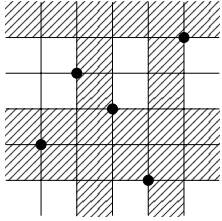

    \centering
    \shpatt{\pattdispscale}{5}{2,4,3,1,5}[0/1,0/2,0/5,1/1,1/2,1/5,2/0,2/1,2/2,2/3,2/4,2/5,3/1,3/2,3/5,4/0,4/1,4/2,4/3,4/4,4/5,5/1,5/2,5/5][][][]
    \caption{An example of an anchored pattern. The point with value $5$ is
    anchored to the boundary as the highest point and the remaining points are anchored
  to it through each other.\label{fig:anchoredex}}
  \end{figure}


  \begin{proposition}
    Every anchored mesh pattern is a binary mesh pattern.\label{prop:anchoredbin}
  \end{proposition}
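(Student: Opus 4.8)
The plan is to prove the stronger statement that in an anchored pattern $p=(\tau,R)$ \emph{every} point is \emph{determined}: in any occurrence of $p$ in a permutation $\pi$, each point of $p$ must be sent to a point of $\pi$ that depends on $\pi$ alone and not on the choice of occurrence. Granting this, any two occurrences of $p$ in the same $\pi$ send every point of $p$ to the same point of $\pi$ and therefore coincide, so $p$ has at most one occurrence in $\pi$ (and if it has none there is nothing to prove), which is exactly the assertion that $p$ is binary.

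First I would establish the base case, a point anchored to a boundary. Suppose the point $(i,\tau(i))$ is an occurrence of $\textmpattern{\patttextscale}{1}{1}{0/1,1/1}{}$. By the definition of an occurrence of a mesh pattern inside a mesh pattern, the two shaded cells above the single point must correspond to shaded cells of $p$; stretched to $(i,\tau(i))$ this says that every square of $R$ lying above height $\tau(i)$ is shaded. Hence in any occurrence of $p$ in $\pi$ nothing of $\pi$ lies above the image of this point, forcing it to be the unique maximum of $\pi$. The three symmetric boundary patterns force the minimum, the leftmost, and the rightmost point of $\pi$ in the same way, and in each case the image is a function of $\pi$ alone.

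Next I would carry out the inductive step for a point $a$ anchored to a point $b$. Each of the four size-two templates, for instance $\textmpattern{\patttextscale}{2}{1,2}{1/0,1/1,1/2}{}$, contains a fully shaded strip separating $a$ from $b$: a column in the two horizontal templates and a row in the two vertical ones. Interpreting the occurrence of this template in $p$, and then the occurrence of $p$ in $\pi$, the shaded strip forces the images of $a$ and $b$ to be \emph{adjacent} in $\pi$ --- positionally consecutive (no point of $\pi$ between them in index) in the horizontal case, and consecutive in value in the vertical case --- while the relative order of $a$ and $b$ within $\tau$ records on which side $a$ must sit. Therefore, once the image of $b$ is determined, the image of $a$ is its unique neighbour in the appropriate direction (the point of $\pi$ immediately to the left or right, or immediately below or above), which is again determined by $\pi$.

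Finally I would splice these together along anchoring chains. By Definition~\ref{def:anchored} each point $p_{i_1}$ has a sequence $p_{i_1}p_{i_2}\cdots p_{i_n}$ with $p_{i_\ell}$ anchored to $p_{i_{\ell+1}}$ and $p_{i_n}$ anchored to a boundary; inducting on the length of a shortest such chain --- boundary-anchored points forming the base case and each point being determined as soon as its successor is --- shows that every point of $p$ is determined, completing the argument. I expect the only delicate point to be this last forcing step: one must verify that the shaded strip of the size-two template really pulls back, through the two layers of occurrence, to the \emph{entire} forbidden region between the images of $a$ and $b$ in $\pi$, so that ``adjacent in $\pi$'' together with the order data of $\tau$ genuinely pins the image of $a$ to a single point rather than to a set of candidates.
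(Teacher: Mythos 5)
Your proof is correct and takes essentially the same route as the paper's: boundary-anchored points are pinned to the extremal points of $\pi$, each point anchored to an already-determined point is forced to be its unique neighbour in index or value by the fully shaded strip, and induction along the anchoring chains then determines every point, so the occurrence is unique. The ``delicate point'' you flag about pulling the shaded strip back through both layers of occurrence is exactly the step the paper states in one line without further justification.
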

  \begin{proof}

    Let $p$ be an anchored mesh pattern and let $\pi$ be a permutation that contains $p$. Any occurrence of
    $p$ in $\pi$ will use the same point in $\pi$ for a boundary-anchored point
    in $p$, e.g., if $p$ contains a point anchored to the bottom boundary
    (fully shaded bottom row), $(i,1)$ then the corresponding point in $\pi$ must be
    the lowest point in $\pi$. Any point anchored to $(i,1)$ is therefore uniquely
    determined in any occurrence of $p$ in $\pi$, since it must have an
    adjacent index, $i+1$, or value, $2$. This argument can be iterated on the points
    anchored to the points anchored to $i$ and so on. Since every point in $p$
    is anchored to a boundary-anchored point through a sequence of points, each
    point is uniquely determined in every occurrence of $p$ in $\pi$.  Hence,
    there can only be one occurrence of $p$ in $\pi$.
  \end{proof}

 The previous result implies:

  \begin{corollary}\label{cor:infbin}
    There are infinitely many binary mesh patterns.
  \end{corollary}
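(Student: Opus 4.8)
The plan is to deduce the corollary immediately from Proposition~\ref{prop:anchoredbin}: since every anchored mesh pattern is binary, it suffices to exhibit infinitely many anchored mesh patterns. As the ones I construct will have pairwise distinct sizes, they will automatically be distinct patterns, and each will be binary.

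First I would fix $n \geq 1$ and construct an anchored pattern $p_n$ of size $n$ on the increasing permutation $\mathrm{id}_n = 12\cdots n$. I would anchor the top point, at $(n,n)$, to the upper boundary by shading the two squares $\boks{n-1,n}$ and $\boks{n,n}$ directly above it, so that it is forced to be the highest point of any host permutation; this realizes an occurrence of the boundary-anchoring pattern $\textmpattern{\patttextscale}{1}{1}{0/1,1/1}{}$ at that point. Then, for each $i = 1, \ldots, n-1$, I would anchor the point of value $i$ to the point of value $i+1$ by shading the entire row lying strictly between their values, i.e.\ the squares $\boks{k,i}$ for $k = 0, \ldots, n$. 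Because the two points form the classical pattern $12$ and the corresponding middle row is fully shaded, they constitute an occurrence of the value-adjacency pattern $\textmpattern{\patttextscale}{2}{1,2}{0/1,1/1,2/1}{}$, so the point of value $i$ is anchored to the point of value $i+1$ in the sense of Definition~\ref{def:anchored}.

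Next I would verify that $p_n$ is anchored. For the point of value $i$, the sequence of points of values $i, i+1, \ldots, n$ has each consecutive pair anchored by the previous step, and the final point, of value $n$, is anchored to the boundary; hence every point is connected to a boundary-anchored point through a chain of anchorings, which is exactly the requirement of Definition~\ref{def:anchored}. By Proposition~\ref{prop:anchoredbin}, each $p_n$ is therefore binary. Since $|p_n| = n$, the patterns $p_1, p_2, \ldots$ are pairwise distinct, giving infinitely many binary mesh patterns (compare the size-$5$ instance in Figure~\ref{fig:anchoredex}).

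The only delicate point, though a routine one, is the verification that the shadings I prescribe actually force the intended adjacencies, i.e.\ that each designated pair of points genuinely realizes one of the small anchoring mesh patterns under the mesh-in-mesh occurrence convention; once this is checked, the chaining and the conclusion are immediate. One could equally chain the points by index-adjacency, shading columns rather than rows, since consecutive values are also consecutive positions in $\mathrm{id}_n$; either choice yields the same conclusion.
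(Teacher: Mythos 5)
Your overall strategy is exactly the one the paper intends: the corollary is stated as an immediate consequence of Proposition~\ref{prop:anchoredbin}, and the implicit content is that anchored mesh patterns exist in every size, so exhibiting one anchored pattern per size (as you do) is the right move, and the chaining of value-adjacent points by fully shaded intermediate rows is correct.

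There is, however, one step that fails as written: the boundary anchor. You shade only the two squares $\boks{n-1,n}$ and $\boks{n,n}$ above the top point $(n,n)$, but being \emph{anchored to a boundary} in the sense of Definition~\ref{def:anchored} means that the point is an occurrence of the one-point mesh pattern with both of its upper squares shaded, and under the mesh-in-mesh occurrence convention the square $\boks{0,1}$ of that small pattern corresponds to \emph{all} squares $\boks{k,n}$ with $0 \le k \le n-1$ of $p_n$. So the entire top row must be shaded, not just the two squares adjacent to the point; this is visible in Figure~\ref{fig:anchoredex}, where the whole row $\boks{0,5},\dots,\boks{5,5}$ is shaded. (Your informal gloss is also not true: with only those two squares shaded, a host permutation may still have a point above the image of $(n,n)$ but far to its left, so it is not forced to be the highest point.) With your shading, for $n \ge 2$ no point of $p_n$ is anchored to any boundary --- rows $0$ and $n$ and columns $0$ and $n$ are all incompletely shaded --- so $p_n$ is not anchored and Proposition~\ref{prop:anchoredbin} does not apply. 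The fix is trivial: shade the whole top row $\boks{0,n},\dots,\boks{n,n}$ (or a whole outer column). With that correction every point of value $i$ chains through $i+1,\dots,n$ to the boundary-anchored top point, each $p_n$ is anchored and hence binary, and the distinct sizes give infinitely many binary mesh patterns.
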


  Proposition~\ref{prop:anchoredbin} shows that anchored patterns are binary.
  However, non-anchored binary patterns do exist, such as the pattern in
  Figure~\ref{fig:nonanchoredmesh}.\footnote{This pattern can be shown to be binary with
  Lemma~\ref{lem:binforcemaxlen}}
  \begin{figure}[hb]
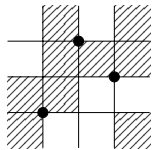

    \centering
    \shpatt{\pattdispscale}{3}{1,3,2}[0/0,0/1,1/1,1/2,1/3,2/2,3/0,3/2,3/3]
    \caption{\label{fig:nonanchoredmesh}Example of a non-anchored
    binary mesh pattern.}
  \end{figure}

  While several permutation classes can be enumerated using the method in
  Example~\ref{ex:meshbinaryapplication} we now turn our attention to a more
  powerful method, which is including a force with the pattern.


  \begin{definition}
    A \emph{forced pattern} is a tuple $(p, F)$ of a pattern
    $p$ and a force $F$.  An \emph{occurrence} of a forced pattern $(p, F)$
    in a permutation $\pi$ is an occurrence  $c$ of $p$ in $\pi$ such that
      \[\text{strength}_F(\pi, c) = \max_{\text{occ.\ }c'\text{ of }p
    \text{\ in\ }\pi}\text{strength}_F(\pi, c')\]\label{def:forceocc}
    where the force strengths are compared in the lexicographical order.
  \end{definition}

  From the definition it follows that a permutation $\pi$ contains a forced
  pattern $(p, F)$ if and only if it contains its underlying pattern $p$.
  We extend Definition~\ref{def:binpatt} to also apply to these new patterns.

  \begin{remark}
  	We note that although forced patterns are similar to anchored mesh patterns
  	in some aspects, these are different in general. We leave it to the reader
  	to show that the forced pattern pattern $(12, ((1,\leftarrow)))$ does not
  	have the same occurrences as the mesh pattern
  	$\textmpattern{\patttextscale}{2}{1,2}{0/0,0/1}{}$, or the mesh pattern
  	$\textmpattern{\patttextscale}{2}{1,2}{0/0,0/1,0/2}{}$ in a permutation.
  \end{remark}


  \begin{proposition}\label{prop:fullforce}
    A forced pattern $(p, F)$ with $|p| = |F|$ is binary.\label{prop:maxlenbin}
  \end{proposition}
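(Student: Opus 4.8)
The plan is to show that the lexicographically maximal-strength occurrence of $p$ in any permutation is \emph{unique}; since an occurrence of the forced pattern $(p,F)$ is by Definition~\ref{def:forceocc} precisely such a maximal occurrence, this immediately yields $\occ{(p,F)}{\pi} \le 1$ for every $\pi$, which is exactly the assertion that $(p,F)$ is binary. The structural fact I would exploit is that the hypothesis $|p| = |F|$ forces $F$ to assign a direction to \emph{every} point of $p$: writing $\tau \in \symm_k$, the size condition reads $\ell = k$, and since the indices $i_1,\dots,i_k$ appearing in $F$ are required by Definition~\ref{def:force} to be distinct elements of $[1,k]$, they must exhaust $[1,k]$.

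First I would fix a permutation $\pi$ that contains $p$ (if it does not, there is nothing to prove, as then $\occ{(p,F)}{\pi} = 0$), and take two occurrences $c = c_1 \cdots c_k$ and $c' = c'_1 \cdots c'_k$ of $p$ in $\pi$, each of maximal strength with respect to $F$. The set of occurrences is finite and nonempty, so the lexicographic maximum is attained; because both $c$ and $c'$ attain it, their strength tuples coincide, giving $\strength_F(\pi, c, i_j) = \strength_F(\pi, c', i_j)$ for every $j$. The crux is then the observation that, once a direction is attached to a point, a single strength coordinate pins that point down completely. I would argue this by cases on $a_j$: if $a_j \in \{\uparrow,\downarrow\}$ the coordinate equals $\pm c_{i_j}$, so equality of strengths yields $c_{i_j} = c'_{i_j}$, i.e.\ the two occurrences carry the same \emph{value} at index $i_j$; if instead $a_j \in \{\leftarrow,\rightarrow\}$ the coordinate equals $\pm \pi^{-1}(c_{i_j})$, so equality yields $\pi^{-1}(c_{i_j}) = \pi^{-1}(c'_{i_j})$, i.e.\ the same \emph{position}. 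Since $\pi$ is a bijection, agreeing on a value forces agreement on the corresponding position and conversely, so in either case the $i_j$-th point of $c$ is literally the same point of $\pi$ as the $i_j$-th point of $c'$.

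Finally I would combine these per-index conclusions: as $j$ ranges over $1,\dots,k$ the indices $i_j$ run through all of $[1,k]$, so the two occurrences agree at every one of their $k$ points, whence $c = c'$. This establishes uniqueness of the maximal-strength occurrence and completes the argument. I do not anticipate a genuine obstacle here, since the proof is essentially an unwinding of Definition~\ref{def:force} and Definition~\ref{def:forceocc}; the only points meriting care are the four-way case analysis above and the observation that, because $\pi$ is a permutation, a prescribed value determines a point as firmly as a prescribed position does.
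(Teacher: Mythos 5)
Your proposal is correct and follows essentially the same route as the paper's proof: both argue that two occurrences of equal (hence maximal) strength must, at each forced index, agree either in value or in position, and that either agreement pins down the same point of $\pi$, so the occurrences coincide. Your version merely spells out the details (that the indices of $F$ exhaust $[1,k]$, and that a value determines a point because $\pi$ is a bijection) that the paper leaves implicit.
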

  \begin{proof}
    Assume a permutation $\sigma$ has two occurrences of $p$,
    $c_1 = \sigma_{i_1}\dots \sigma_{i_n}$ and
    $c_2 = \sigma_{j_1} \dots \sigma_{j_n}$
    that have equal strength with respect to $F$. Then for every $k \in [1,n]$ we
    have either $i_k = j_k$ or $\sigma_{i_k} = \sigma_{j_k}$ from which it follows
    that the two occurrences are equal.
  \end{proof}

  Proposition~\ref{prop:maxlenbin} shows that any (classical) pattern
  can be made binary by adding a force to it. The more points that are
  forced, the fewer occurrences there are of the pattern.  If all the points are forced,
  then there is a unique occurrence of the pattern with maximum strength.

  Sometimes it may be preferable to use a force of smaller size.
  This can be important if one wants to apply the technique
  of Example~\ref{ex:binaryapplication2} to several permutation classes. Then
  for each potential pattern of length $k$ it might be too computationally
  hard to check if a force of length $k$ gives a good description of the
  subclass of the permutation class that contains the pattern.

  \begin{lemma}\label{lem:binforcemaxlen}
    Let $p$ be a pattern that is not binary with respect to the permutation
    class $\mathcal{C}$. If $p$ has size $n$, then there exists a
    permutation $\pi \in \mathcal{C}$ of size at most $2n$ such that $\occ{p}{\pi} > 1$.
  \end{lemma}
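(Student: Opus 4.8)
The plan is to manufacture a small witness by restricting a large witness to exactly the points that take part in two competing occurrences. First I would unpack the hypothesis: since $p$ is not binary with respect to $\mathcal{C}$, Definition~\ref{def:binpatt} supplies a permutation $\sigma \in \mathcal{C}$ with $\occ{p}{\sigma} > 1$, so $\sigma$ carries two distinct occurrences $c_1$ and $c_2$ of $p$, each using $n$ points. Let $I$ be the union of the two index sets of $c_1$ and $c_2$; since each occurrence contributes $n$ indices, $|I| \le 2n$. Define $\pi$ to be the standardization (the order-isomorphic relabeling) of the subsequence of $\sigma$ on the indices in $I$, so that $\pi$ is a genuine permutation of size $|I| \le 2n$.

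Next I would verify the three facts the conclusion requires. First, \emph{membership}: a permutation class is downward closed under classical pattern containment, and $\pi$ is by construction a sub-permutation of $\sigma$, so $\pi \in \mathcal{C}$. Second, \emph{survival of both occurrences}: each $c_r$ (for $r = 1,2$) retains all of its points in $\pi$, and standardization preserves their relative order, so the underlying classical pattern $\tau$ is still realized; moreover, passing from $\sigma$ to $\pi$ only deletes points, so every region that was empty for the occurrence $c_r$ in $\sigma$ remains empty in $\pi$, and hence every shaded square of $p$ still corresponds to an empty region. Thus $c_1$ and $c_2$ descend to occurrences of $p$ in $\pi$. Third, \emph{distinctness}: an occurrence is determined by its index set, and the monotone relabeling sending $I$ to $[1,|I|]$ is injective, so $c_1 \neq c_2$ remain distinct in $\pi$. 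Combining these, $\occ{p}{\pi} \ge 2 > 1$, and $\pi$ is the desired witness of size at most $2n$.

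The main obstacle is the shading bookkeeping in the second step: I must make sure deleting points cannot destroy an occurrence of a mesh pattern, which rests on the one-directional fact that a shaded square only demands an empty region and deletion never adds points to a region. I would also flag the harmless asymmetry that deletion could in principle \emph{create} extra occurrences of $p$ in $\pi$ that were blocked in $\sigma$ by a now-removed point; this can only increase $\occ{p}{\pi}$ and so does not affect the bound. (If one sought the analogue for a forced pattern $(p,F)$ this asymmetry would matter, since a newly created occurrence might overtake $c_1$ or $c_2$ in strength; but for the binary notion of Definition~\ref{def:binpatt} it is immaterial.)
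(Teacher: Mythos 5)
Your proof is correct and follows essentially the same route as the paper's: extract two distinct occurrences from a witness $\sigma$, restrict to the (at most $2n$) indices they jointly use, and standardize. The paper's version is terser; your added checks (downward closure of $\mathcal{C}$, survival of shaded-square emptiness under point deletion, and the caveat about forced patterns) are sensible elaborations of details the paper leaves implicit.
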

  \begin{proof}
    Since $p$ is not binary with respect to $\mathcal{C}$ there exists a permutation
    $\sigma \in \mathcal{C}$ such that $\occ{p}{\sigma} > 1$. Let $c_1 =
    \sigma_{i_1}\dots \sigma_{i_n}$ and $c_2 = \sigma_{j_1} \dots \sigma_{j_n}$
    be two distinct occurrences of $p$. Let $\pi$ be the permutation
    that is order-isomorphic to the union of the occurrences $c_1$ and $c_2$. Then
    $\pi$ has size at most $2n$, and contains at least two occurrences of $p$.
  \end{proof}

  By Lemma~\ref{lem:binforcemaxlen} we only need to check permutations up
  to size $2n$ to verify that a forced pattern of size $n$ is binary.
  Given a classical pattern, this allows us to discover, in a brute force manner, a small
  force that makes the pattern binary.
  We start with the pattern with the empty force.
  If the pattern is binary we are done, otherwise
  we pick a point and direction and add this to the force. If this forced pattern
  is now binary, we are done, else we repeat this process.
  By Proposition~\ref{prop:fullforce}, this will eventually result in a binary
  forced pattern.

\begin{example}\label{ex:binaryapplication2}
    Let $B' = \{1324, 1342, 1423, 2143, 2413, 3142\}$ and consider the
    permutation class $\av{B'}$. Similarly as in Example~\ref{ex:meshbinaryapplication}
    we get
    \[
      \av{B'} = \av{132} \sqcup (\av{B'} \cap \co{132}).
    \]
    An occurrence of $132$, in a permutation in $\av{B'}$, will be an occurrence of
    the mesh pattern
    \[
          \shpatt{\pattdispscale}{3}{1,3,2}[0/1,0/2,1/0,1/2,2/0,2/1,2/3,3/2,3/3]
    \]
    It is possible to check that there is no mesh pattern $q \overset{\av{B'}}{\asymp} 132$
    that is binary with respect to $\av{B'}$. However, $132$ is coincident to the
    forced pattern $(132, ((3, \uparrow), (1, \downarrow), (2, \downarrow)))$, which is
    binary by Proposition~\ref{prop:fullforce}.
    An occurrence of this pattern, in a permutation in $\av{B'}$, will be of the form
    \[
          \shpatt{\pattdispscale}{3}{1,3,2}[0/0,0/1,0/2,1/0,1/2,1/3,2/0,2/1,2/3,3/1,3/2,3/3]
    \]
    Because of the force the region corresponding to the square $\boks{0,3}$ avoids the
    pattern $132$. The restrictions from the basis $B'$ imply that the regions
    corresponding to the squares $\boks{1,1}$, $\boks{2,2}$, $\boks{3,0}$ avoid
    $21$, $12$, $B'$, respectively. It is easy to check that there are no other conditions
    causing these regions to be dependent.
    We therefore obtain the following equation satisfied by the generating function
    $F_{B'}(x)$ for the permutation class $\av{B'}$:
    \begin{align*}
      F_{B'}(x) &= F_{132}(x) + F_{132}(x) \cdot F_{21}(x) \cdot F_{12}(x) \cdot F_{B'}(x) \cdot x^3\\
                &= C(x) + \frac{x^3 C(x) F_{B'}(x)}{(1 - x)^2},
    \end{align*}
    where we have used the fact that $F_{132}(x) = C(x)$, the generating function of
    the Catalan numbers. Solving this equation gives
    \begin{align*}
    	F_{B'}(x) &= \frac{(1-x)^2 C(x)}{(1-x)^2 - x^3 C(x)} \\
    	          &= 1 + x + 2x^2 + 6x^3 + 18x^4 + 54x^5 + 167x^6 + 534x^7 \\
    	          &\phantom{=}+ 1755x^8 + 5896x^9 + 20167x^{10} + \dotsb.
    \end{align*}
  \end{example}

By applying the process used to enumerate $\av{B'}$ in the previous example to
non-insertion encodable\footnote{Albert, Linton, and Ru{\v{s}}kuc~\cite{albert2005insertion} studied permutation classes
with a regular language for their insertion encoding. Vatter\cite{vatter2012finding} provided
an algorithm for automatically computing the generating function of such classes.} permutation classes with bases consisting of size $4$ classical patterns
we are able to enumerate $316$ classes. No permutation class with fewer than six
patterns in its basis is successful. Detailed results are in Table~\ref{tab:run}.

\begin{table}[hb]
  \centering
  \begin{tabular}{r|r|r|r}
    Size of basis & Nr.\ of classes & Successes & \% \\ \hline \hline
    12 &   1 &   1 & 100.0 \\
    11 &  10 &  10 & 100.0 \\
    10 &  48 &  39 &  81.3 \\
     9 & 151 &  86 &  57.0 \\
     8 & 337 & 106 &  31.5 \\
     7 & 547 &  62 &  11.3 \\
     6 & 659 &  12 &   1.8 \\
     5 & 578 &   0 &   0.0
  \end{tabular}
  \caption{Number of permutation classes with bases consisting of size $4$ patterns
  that can be enumerated with the process used in Example~\ref{ex:binaryapplication2}.
  \label{tab:run}}
\end{table}

\section{Future work}\label{sec:futurework}

The experimental classification is sufficient to coincidence classify (without proof) the
mesh patterns of sizes $0$, $1$, $2$, $3$ correctly by considering the permutations
of size $1$, $3$, $5$, $10$, respectively. This leads to the following conjecture.

\begin{conjecture}\label{conj:expclas}
  The mesh patterns of size $n$ can be coincidence classified by
  experimental classification with permutations of size $(n + 1)^2 + n$.
\end{conjecture}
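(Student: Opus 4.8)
The plan is to reformulate the conjecture as a bound on the size of a \emph{distinguishing permutation}. Since coincident mesh patterns must share their underlying classical pattern $\tau$, it suffices to prove the following: if $p = (\tau, R)$ and $q = (\tau, S)$ have size $n$ and satisfy $\av{p} \ne \av{q}$, then there is a permutation $\pi$ with $|\pi| \le (n+1)^2 + n$ lying in exactly one of $\av{p}$, $\av{q}$. Indeed, the experimental class of $p$ computed up to size $k$ coincides with its true coincidence class precisely when no distinguishing $\pi$ of size $\le k$ is overlooked, so the conjecture is equivalent to the assertion that the smallest distinguishing permutation for any non-coincident pair has size at most $(n+1)^2 + n$.

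First I would reduce to \emph{canonical} permutations. By symmetry assume some $\pi$ contains $p$ but avoids $q$, and fix an occurrence $O$ of $p$ in $\pi$: its $n$ points cut the grid into the $(n+1)^2$ regions indexed by the squares $\boks{i,j}$, and the squares of $R$ are empty under $O$. Form $\pi'$ by keeping $O$ together with exactly one point of $\pi$ from each nonempty region. Then $\pi'$ has at most $n + (n+1)^2$ points, and it still contains $p$ via $O$, since deleting points only shrinks the regions and the squares of $R$ remain empty. The entire content of the conjecture is thus the single assertion that $\pi'$ still \emph{avoids} $q$; equivalently, that two points of the same region are never both needed.

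The engine of the argument would be minimality together with Remark~\ref{rem:subsetshading}. Take a distinguishing $\pi$ of minimum size; as above it contains $p$ through a fixed occurrence $O$ and avoids $q$. Because $O$ survives the deletion of any non-occurrence point, minimality forces every point of $\pi$ outside $O$ to be \emph{essential}: deleting it must create an occurrence of $q$, so that point is the unique blocker of some occurrence of $\tau$ whose $S$-square it alone keeps nonempty. The goal is then to show that no region of $O$ contains two essential points. Suppose $x$ and $y$ lie in a common region; deleting $x$ exposes an occurrence $O_x$ of $q$ for which $x$ is the sole blocker in one of its $S$-squares, and symmetrically for $y$. One would like to argue that $y$ can serve as a blocker for $O_x$ (and $x$ for $O_y$), contradicting the essentiality of one of them and forcing the region to be reduced to a single point.

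This last step is the main obstacle, and is exactly why the statement remains a conjecture. The difficulty is that $x$ and $y$ occupy the same square only \emph{relative to the reference occurrence $O$}; relative to the scattered points of the competing occurrence $O_x$ they may fall into entirely different squares, so $y$ need not lie in the $S$-square that $x$ was blocking. Controlling the relative position of every essential point with respect to every other occurrence of $\tau$ --- occurrences that may reuse points drawn from many different regions of $O$ --- is a genuinely global combinatorial condition that the local region structure does not obviously constrain. A successful proof would presumably track a consistent collection of occurrences and their blockers simultaneously, in the spirit of the forces of Definition~\ref{def:force} and the simultaneous reasoning of Lemma~\ref{lem:tsa2}. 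The empirical fact that sizes $1,3,5,10$ already suffice for $n \le 3$, well below the conjectured values $1,5,11,19$, suggests that the true bound is smaller and that a sharper invariant is available; but establishing even the stated upper bound in general appears to require a new idea.
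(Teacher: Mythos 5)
This statement is a conjecture, and the paper offers no proof of it --- only the heuristic remark that an occurrence of $p$ has $n$ points and cuts the plane into $(n+1)^2$ regions, at least one of which must be nonempty to block each occurrence of $q$. Your proposal is an honest elaboration of exactly that heuristic: the reduction to a minimal distinguishing permutation, the retention of one witness point per region (giving the $(n+1)^2+n$ count), and then the observation that the resulting $\pi'$ is only guaranteed to still \emph{contain} $p$, not to still \emph{avoid} $q$. You correctly locate the obstruction --- two points lying in the same region relative to the reference occurrence $O$ need not be interchangeable as blockers for a different occurrence $O_x$ of $\tau$, whose squares partition the plane differently --- and this is precisely why the statement remains open in the paper as well. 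Your assessment that the bound is likely not tight is also consistent with the paper's data (sizes $1,3,5,10$ suffice for $n=0,1,2,3$, versus the conjectured $1,5,11,19$). In short: you have not proved the statement, but neither does the paper, and your analysis faithfully reconstructs the paper's intuition while correctly identifying the missing global argument that would be needed to turn it into a proof.
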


The intuition behind the term $(n + 1)^2 + n$ is similar to the proof of
Lemma~\ref{lem:binforcemaxlen}. Consider two mesh patterns $p$, $q$ with the
same underlying classical pattern and a permutation $\pi$ that contains $p$ but
not $q$. Then in every occurrence of $p$ in $\pi$, there must be a point in a
region that corresponds to a shaded region in $q$. Since there are $(n + 1)^2$
squares and $n$ points in a mesh pattern of size $n$, we obtain the term in
Conjecture~\ref{conj:expclas}.

One of the obvious next steps with the Shading Algorithm would be the
classification of size $4$ mesh patterns. The main issue is the number of
mesh patterns to classify since the size of the underlying pattern increases
and the number of different shadings increases to $2^{25}$. The experimental
classification becomes even more vital in this case, but the size of the
permutations to consider also increases dramatically and the task becomes
computationally impractical.

We end with two open enumerative problems:

\begin{problem}
	Given a classical pattern $p$, determine the number of anchored mesh patterns
	$(p, R)$.
\end{problem}

\begin{problem}
	Given a classical pattern $p$, determine the number of binary mesh patterns
	$(p, R)$.
\end{problem}

\bibliographystyle{acm}
\bibliography{references.bib}

\appendix
\section{Implementation of the Shading Algorithm}\label{app:tsaimplementation}
The Python implementations of the Shading Algorithm, Shading Lemma and The
Simultaneous Shading Lemma are available at Bean et al.~\cite{tsagithub}. The core of
the implementations is in the file \texttt{tsa5\_knowledge.py} under a
directory called \texttt{the\_shading\_algorithm}. The script \texttt{classify.py}
reads in experimental classes with known coincidence relations of the patterns
and calls the Shading Algorithm on pairs of patterns to decide their
coincidence. The full classification of the mesh patterns with underlying
classes $12$, $123$ and $231$ is given in the files located in the directory
\texttt{results/final\_results}.

Each result file in the \texttt{results/final\_results} of the GitHub
repository~\cite{tsagithub}, contains multiple lines, where each line represents
a coincidence class. We represent the mesh of the patterns with an integer
such that the binary representation of the integer describes the shadings.
Starting with the least significant bit, the $i$-th bit is set to $1$ if
$\boks{\lfloor i / (n + 1) \rfloor, i \mod (n + 1)}$ is shaded.

\end{document}